\documentclass[11pt]{amsart}

\usepackage[margin=1.6in]{geometry}

\usepackage[T1]{fontenc}
\usepackage{mathrsfs}  
\usepackage{amssymb}
\usepackage{dsfont}
\usepackage[all]{xy}
\usepackage{pb-diagram, pb-xy}
\usepackage{graphicx}
\usepackage[hidelinks]{hyperref}
\hypersetup{colorlinks=true} 

\newtheoremstyle{customthm}
{7mm} 
{\topsep}     
{\itshape}    
{8pt}         
{\bf}         
{.~---}       
{5pt plus 1pt minus 1pt} 
{\thmname{#1}\thmnumber{ #2}\thmnote{ (\normalfont #3)}}

\newtheoremstyle{customdef}
{7mm} 
{\topsep}     
{\normalfont} 
{8pt}         
{\bf}         
{.~---}       
{5pt plus 1pt minus 1pt} 
{\thmname{#1}\thmnumber{ #2}\thmnote{ (\normalfont #3)}}

\newtheoremstyle{custompar}
{7mm} 
{\topsep}     
{\normalfont} 
{8pt}         
{\bf}         
{}            
{5pt plus 1pt minus 1pt} 
{\thmname{#1}\thmnumber{ #2}\thmnote{ (\normalfont #3)}}

\theoremstyle{customthm}
\newtheorem{thm}{Theorem}[section]
\newtheorem{cor}[thm]{Corollary}

\newtheorem{lem}[thm]{Lemma}
\newtheorem{prop}[thm]{Proposition}
\numberwithin{equation}{thm}

\theoremstyle{customdef}
\newtheorem{defn}[thm]{Definition}

\newtheorem{ex}[thm]{Example}
\newtheorem{para}[thm]{}

\numberwithin{equation}{thm}


\newcommand{\abs}[1]{\vert#1\vert}

\newcommand{\eps}{\varepsilon}
\newcommand{\tq}{\: | \:}

\newcommand{\qqet}{\qquad\mbox{and}\qquad}
\newcommand{\II}{\mathds{1}}

%

%
%

\renewcommand{\div}{\mathrm{div}}

\DeclareMathOperator{\End}{End}
\DeclareMathOperator{\Ext}{Ext}
\DeclareMathOperator{\ev}{ev}

\DeclareMathOperator{\Gal}{Gal}
\DeclareMathOperator{\GL}{GL}

\DeclareMathOperator{\Hom}{Hom}

\DeclareMathOperator{\inv}{inv} 

 %

 %
 %

\DeclareMathOperator{\PSL}{PSL} 
\DeclareMathOperator{\pr}{pr} %

\renewcommand{\Re}{\mathrm{Re}}
\newcommand{\dR}{\mathrm{dR}}

\DeclareMathOperator{\sing}{sing}%
\DeclareMathOperator{\SL}{SL}%
\renewcommand{\sl}{\mathfrak{sl}}%
\DeclareMathOperator{\Sp}{Sp}%
\DeclareMathOperator{\spec}{Spec}
\DeclareMathOperator{\SO}{SO}%


\def\resp{\text{resp.}\kern.3em}

 %

%

%
%

%
%

\newcommand{\IA}{\mathbb{A}}
\newcommand{\IC}{\mathbb{C}}

\newcommand{\IG}{\mathbb{G}}

\newcommand{\IP}{\mathbb{P}}
\newcommand{\IQ}{\mathbb{Q}}
\newcommand{\IR}{\mathbb{R}}

\newcommand{\IZ}{\mathbb{Z}}

\newcommand{\cA}{\mathcal A}
\newcommand{\cB}{\mathcal B}
\newcommand{\cE}{\mathcal E}
\newcommand{\cG}{\mathcal G}
\newcommand{\cH}{\mathcal H}
\newcommand{\cM}{\mathcal M}
\newcommand{\cO}{\mathcal O}
\newcommand{\cR}{\mathcal R}

\newcommand{\sD}{\mathscr D}
\newcommand{\sL}{\mathscr L}

\newcommand{\fg}{\mathfrak g}

\newcommand{\FT}{\mathrm{FT}}
\newcommand{\Hyp}{\mathrm{Hyp}}

\def\eg{\textit{e.g.}\kern.3em}
\def\ie{\textit{i.e.}\kern.3em}
\def\loccit{\textit{loc.cit.}\kern.3em}

\newcommand{\Conn}{{\normalfont\textbf{Conn}}}
\newcommand{\bE}{{\normalfont\textbf{E}}}
\newcommand{\bG}{{\normalfont\textbf{G}}}
\newcommand{\bH}{{\normalfont\textbf{H}}}
\newcommand{\RS}{\normalfont\textbf{RS}}%
\newcommand{\bT}{{\normalfont\textbf{T}}}

\def\sfrac#1#2{{#1}/{#2}}

\renewcommand{\mapsto}{\longmapsto}

\newcommand{\floor}[1]{\left\lfloor#1\right\rfloor}
\renewcommand{\Re}{\mathrm{Re}}

\DeclareMathOperator{\Sum}{sum}

\newcommand{\rH}{\mathrm{H}}

\setlength{\textwidth}{15.6cm}%
\setlength{\textheight}{22.5cm}%
\setlength{\hoffset}{-1.1cm}%
\setlength{\voffset}{-1cm}

\renewcommand{\leq}{\leqslant}
\renewcommand{\geq}{\geqslant}

\title{A non-hypergeometric $E$-function}
\thanks{The research of J.\,F. was partially supported by the grant ANR-18-CE40-0017 of Agence Nationale de la Recherche.}
\keywords{$E$-function, hypergeometric series, differential Galois theory, Fourier-Laplace transforms}
\subjclass[2000]{11J91, 33C20, 34M35}

\author{Javier Fres\'an} 
\address{Javier Fres\'an, CMLS, \'Ecole polytechnique, Palaiseau, France}
\email{javier.fresan@polytechnique.edu}

\author{Peter Jossen}%
\address{Peter Jossen, ETH Z\"urich, Z\"urich, Switzerland and King's College, London, England}
\email{peter.jossen@math.ethz.ch}

\begin{document}

\begin{abstract}
We answer in the negative Siegel's question whether all $E$\nobreakdash-functions are polynomial expressions in hypergeometric $E$\nobreakdash-functions. Namely, we show that if an irreducible differential operator of order three annihilates an $E$-function in the hypergeometric class, then the singularities of its Fourier transform are constrained to satisfy a symmetry property that generically does not hold. The proof relies on André's theory of $E$-operators and Katz's computation of the Galois group of hypergeometric differential equations. 
\end{abstract}

\maketitle

\tableofcontents

\section*{Introduction and overview}

\begin{par}
With the goal of generalising the theorems of Hermite, Lindemann, and Weierstrass about transcendence of values of the exponential function, Siegel introduced the notion of \emph{$E$\nobreakdash-function} in his landmark 1929 paper \cite{Siegel1929}. An $E$\nobreakdash-function is a power series with algebraic coefficients that satisfies a linear differential equation and certain growth conditions. A paradigmatic example besides the exponential is the classical Bessel function $J_0(z)$, for which Siegel achieved his goal by proving that the values of $J_0(z)$ and $J_0'(z)$ at all non-zero algebraic numbers are algebraically independent. Siegel's methods were in principle suited for all $E$-functions satisfying a certain ``normality'' property, but its actual verification remained elusive in new examples. In removing this assumption in 1959, Shidlovskii \cite{SiegelShidlovsky} took a decisive step in the understanding of~$E$\nobreakdash-functions. Further amendments to what quickly became known as the Siegel--Shidlovskii theorem were later given by André \cite{GevreyII} and Beukers~\cite{beukers}.
\end{par}

\begin{par}
A rich class of $E$\nobreakdash-functions, including the motivating examples of the exponential and the Bessel function, is given by the series
\[
F\biggl(\begin{matrix} a_1, \ \ldots, \ a_p \\ b_1, \ \ldots, \ b_q \end{matrix} \:\bigg|\:\: \lambda z^{q-p} \biggr)=\sum_{n=0}^\infty \frac{(a_1)_n\cdots (a_p)_n}{(b_1)_n\cdots (b_q)_n}\lambda^n  z^{n(q-p)}
\]
for integers $0 \leq p<q$, rational parameters $a_1, \ldots, a_p \in \IQ$ and $b_1, \ldots, b_q \in \IQ \setminus \IZ_{\leq 0}$, and an algebraic scalar~$\lambda$. Here and throughout, $(x)_n=x(x+1)\cdots (x+n-1)$ denotes the rising Pochhammer symbol of a rational number $x$. We call these series \emph{hypergeometric $E$\nobreakdash-functions} of type $(p, q)$. After having proved that any polynomial expression with algebraic coefficients in $E$\nobreakdash-functions is again an~$E$\nobreakdash-function, Siegel observed that it would not be without interest to find an example of an~$E$\nobreakdash-function that does not come from hypergeometric $E$\nobreakdash-functions in this way \cite[p.\,225]{Siegel1929}. The question was still open twenty years later when he published his lecture notes on transcendental numbers~\hbox{\cite[p.\,58]{Siegel}}, and appears in Shidlovskii's book \cite[p.\,184]{Shidlovsky} as well. We reformulate it as follows: 
\end{par}

\vspace{2mm}
\begin{par}\emph{Does the $\overline \IQ[z]$-algebra generated by hypergeometric $E$\nobreakdash-functions contain all $E$\nobreakdash-functions?}
\end{par}

\vspace{2mm}
\begin{par}
According to a theorem of Gorelov \cite{gorelov1, gorelov2, RivRoq}, $E$\nobreakdash-functions satisfying a differential equation of order at most two are indeed polynomial expressions in hypergeometric $E$\nobreakdash-functions. However, compelling evidence against a general positive answer was recently given by Fischler and Rivoal \cite{RivoalFischler}. They show that any special value of the closely related class of $G$\nobreakdash-functions arises as a coefficient in the asymptotic expansion at infinity of an $E$\nobreakdash-function. Since those of hypergeometric $E$\nobreakdash-functions are of a very special nature, a positive answer would yield an inclusion of the set of such values into a rather small set, which is then seen to contradict a form of Grothendieck's period conjecture. A priori, there is no lack of examples of $E$\nobreakdash-functions beyond those coming from hypergeometric functions. Given an algebraic variety $X$ over $\overline \IQ$ together with a regular function $f\colon X\to \IA^1$, one looks at exponential period functions
\[ P(z)=\int_\gamma e^{-zf}\omega \]
in the complex variable $z$, where $\omega$ is an algebraic differential form on $X$ and $\gamma$ is a topological cycle on a compactification of $X(\IC)$ that goes to infinity only in the directions where the real part of $f$ is positive, to ensure that the integral converges on a half-plane. Such functions are expected to be complex linear combinations of functions of the form $z^a\log(z)^bE(z)$, with $a\in \IQ$, $b\in \IZ_{\geq 0}$, and~$E$ an $E$\nobreakdash-function, and one might even speculate, in the spirit of the Bombieri--Dwork conjecture, that \emph{all} $E$\nobreakdash-functions arise from geometry in this manner. The one given in the following theorem was produced by taking the polynomial $f(x)=x^4-x^2+x$, viewed as a regular function on~$X=\IA^1$, the differential form $\omega = dx$, and the real line as $\gamma$.
\end{par}

\vspace{4mm}
\begin{par}{\bf Theorem.} \emph{The answer to Siegel's question is negative. For example,}
\[ \sum_{n=0}^\infty \biggl(\sum_{m=0}^{\floor{2n/3}} \frac{(\tfrac 14)_{n-m} }{(2n-3m)!(2m)!}\biggr) \:z^n \] 
\emph{is an $E$-function that is transcendental over the $\overline \IQ[z]$-algebra generated by hypergeometric $E$\nobreakdash-functions.}
\end{par}

\vspace{4mm}
\begin{par}
Let us outline the main ideas of the proof. Every $E$\nobreakdash-function is annihilated by a special type of differential operator called \emph{$E$-operator}. For hypergeometric $E$\nobreakdash-functions, such an operator can be obtained by appropriately modifying a classical hypergeometric differential equation. Let us write~$\cE$ for the differential $\IC$-algebra generated by all solutions of $E$-operators, and $\cH$ for the subalgebra generated by all solutions of hypergeometric $E$-operators. Our goal is to show that the inclusion $\cH \subseteq \cE$ is strict and, more precisely, that the above function belongs to~$\cE$ but not to $\cH$. As a first step, we reformulate this inclusion of algebras in terms of an inclusion of certain categories of~$\sD$\nobreakdash-modules on~$\IG_m$, namely the categories $\bH$ and $\bE$ of those having a basis of solutions in the differential algebras $\cH$ and $\cE$ respectively. By Andr\'e's fundamental results on $E$-operators~\cite{GevreyI}, these are tannakian subcategories of the category of vector bundles with connection on~$\IG_m$, and every $E$\nobreakdash-function is a solution of some module in $\bE$. Our updated goal is then to show that the inclusion of categories $\bH\subseteq \bE$ is not an equivalence and that the given power series is a solution of some object of $\bE$ but not of any object of~$\bH$. 
\end{par}

\begin{par}
Given a $\sD_{\IG_m}$-module $M$ in $\bE$, we can pushforward it via the inclusion $j\colon\IG_m\to \IA^1$ and then take its Fourier transform. It turns out that the so obtained $\sD_{\IA^1}$-module $A=\FT(j_\ast M)$ is regular singular at each of its finitely many singularities, including infinity, and has vanishing de Rham cohomology. Such $\sD$-modules form a tannakian category, the role of tensor product being played by additive convolution, and the tannakian Galois group of $A$ is isomorphic to the differential Galois group of $M$. The key observation at this stage is that, if~$M$ belongs to~$\bH$, the singularities of $A$ tend to form a rather particular configuration. For instance, if~$M$ is the module associated with the differential equation for a hypergeometric $E$\nobreakdash-function of type~$(p,q)$, then~$A$ has dimension $q$ and its non-zero singularities lie on a regular \hbox{$(q-p)$-gon}. If, moreover,~$M$ is simple, its Galois group is well understood by a theorem of Katz \cite{Katz}. Combining this result with some tannakian shenanigans, we reach the conclusion that if~$M$ is a three-dimensional object of $\bH$ whose Galois group contains $\SL_3$, then the singularities of $A$ are either collinear or form an equilateral triangle.
\end{par}

\begin{par}
With these preparations in place, we construct a rich family of objects of $\bE$ by considering the $\sD_{\IG_m}$-module $M$ that is naturally associated with the parameter integral
\[
P(z) = \int_{\IR} e^{-zf(x)}dx,
\]
where $f$ is a monic polynomial of degree four with algebraic coefficients. The function~$P$ satisfies a differential equation of order three, so $M$ has rank three. A computation reveals that~$A=\FT(j_\ast M)$ is the $\sD_{\IA^1}$-module whose solutions are the algebraic functions $u(z)$ satisfying the equation $f(u(z))=z$, from which it follows that $M$ belongs to $\bE$ and that the singularities of~$A$ are the critical values of $f$. For most choices of the polynomial $f$ (in particular, we can verify this for~$f(x)=x^4-x^2+x$), the module~$M$ is simple with Galois group~$\GL_3$ and the three singularities of $A$ are neither collinear nor do they form an equilateral triangle. We conclude that~$M$ does not belong to $\bH$ and that every non-zero solution of $M$ (in particular, the function~$P$) is transcendental over~$\cH$. It is then not hard to check that the same holds for the~$E$\nobreakdash-functions~$E_0$ and $E_2$ uniquely determined~by the monodromy decomposition 
\[
P(z) = \int_{\IR} e^{-z(x^4-x^2+x)}dx=\tfrac{1}{2}\Gamma\bigl(\tfrac 14\bigr) z^{-1/4}E_0(z) + \tfrac{1}{2}\Gamma\bigl(-\tfrac 14\bigr)z^{1/4}E_2(z). 
\]
Indeed, since $z^{-1/4}$ and $z^{1/4}$ are linearly independent over the field of Laurent series, any differential operator with polynomial coefficients that annihilates $P$ must also annihilate $z^{-1/4}E_0(z)$ and $z^{1/4}E_2(z)$, and if those functions are transcendental over $\cH$, then so are $E_0$ and $E_2$. The function~$E_0$ is the one given in the theorem. Although our method provides a host of examples of non\nobreakdash-hypergeometric $E$-functions, we do not yet fully understand their structure. In particular, finding a set of necessary and sufficient conditions for a three-dimensional simple object of~$\bE$ to belong to~$\bH$ remains an open question. 
\end{par}

\vspace{4mm}
\begin{par}{\bf Acknowledgements.} Our interest in Siegel's question was aroused by Rivoal's survey~\cite{Rivoal} and his joint work with Fischler; it is our pleasure to thank them both for inspiration and useful discussions. Many thanks as well to Erik Panzer for explaining to us how to perform the computations of Section \ref{sec:ComputationExponentialIntegral}, and to Boris Adamczewski, Yves André, Daniel Bertrand, Thomas Preu, and the anonymous referees for their comments on previous versions.
\end{par}

\vspace{14mm}
\section{\texorpdfstring{On $E$-functions and their differential equations}{On E-functions and their differential equations}}

\begin{par}
In this section, we recall the notions of $E$\nobreakdash-function and $G$-series, as well as some theorems concerning the differential equations they satisfy, due to Andr\'e, Chudnovsky, and Katz. Throughout, $\overline \IQ$ denotes the algebraic closure of $\IQ$ in $\IC$.
\end{par}

\begin{para}
\begin{par}
As outlined in the introduction, we will deal with differential algebras generated by solutions of various classes of differential operators. All these algebras will be contained in the differential algebra of \emph{convergent Laurent series with monodromy}
\[
\cM=\IC\{\!\{z\}\!\}[(z^a)_{a\in \IC}, \log(z)],
\]
which is an integral domain and contains an algebraic closure of the field $\IC(z)$ of rational functions. Its elements can be written~as 
$$f(z) =\sum_{i=1}^n z^{a_i}\log(z)^{b_i} F_i(z),$$
where the $F_i$ are Laurent series with positive radius of convergence, $a_i\in \IC$, and~$b_i\in \IZ_{\geq 0}$. This representation is unique if one further imposes the constraint $0\leq \Re(a_i) < 1$. We can interpret~$f$ as a holomorphic function on a sector  $\{z\in \IC \tq \Re(z)>0 \mbox{ and } \abs{z}<r\}$ for some small $r>0$.
\end{par}

\begin{par}
By a \emph{differential operator} we generally understand an element of the Weyl algebra $\overline \IQ[z,\partial]$ or occasionally of $\overline \IQ(z)[\partial]$. Fuchs's criterion and Frobenius's method show that a differential operator $L \in \overline \IQ(z)[\partial]$ has a regular singularity at $z=0$ if and only if $L$ admits a basis of solutions in $\cM$. Given a non-constant rational function $h\in \overline \IQ(z)$, we write
$$[h]^\ast\colon\overline \IQ(z)[\partial] \to \overline \IQ(z)[\partial]$$
for the ring homomorphism determined by $[h]^\ast(z)=h(z)$ and $[h]^\ast(\partial)=\tfrac 1{h'(z)}\partial$. If $L\in\overline \IQ(z)[\partial]$ is regular singular at $0$ and $h(0)=0$, then $[h]^\ast L$ is again regular singular at $0$, and the solutions of $[h]^\ast L$ are all of the form $f\circ h$ for a solution $f$ of $L$. Note that a determination of the logarithm needs to be chosen in order to interpret $f\circ h$ as a Laurent series with monodromy.
\end{par}

\begin{par}
We call \emph{Fourier transform} for differential operators the $\overline \IQ$-linear ring automorphism
\begin{equation}\label{Eqn:FTasRingAutomorhpismOfD}
\FT\colon \overline \IQ[z,\partial]\to \overline \IQ[z,\partial]
\end{equation}
determined by $\FT(z)=-\partial$ and $\FT(\partial)=z$, and \emph{adjunction} the $\overline \IQ$-linear map
\begin{equation*}\label{Eqn:AdjunctionOfOperators}
(-)^\ast \colon \overline \IQ[z,\partial]\to \overline \IQ[z,\partial]
\end{equation*}
defined on monomials by $(z^n\partial^m)^\ast = (-\partial)^mz^n$. The latter is an involution, in the sense that the equalities $(L^\ast)^\ast = L$ and $(L_1L_2)^\ast = L_2^\ast L_1^\ast$ hold for all differential operators $L, L_1, L_2$. 
\end{par}
\end{para}

\begin{defn}\label{Def:EFunctionGFunction}
An \emph{$E$\nobreakdash-function} $E(z)$, respectively a \emph{$G$\nobreakdash-series} $G(z)$, is a formal power series with algebraic coefficients of the form
\begin{displaymath}
E(z)=\sum_{n=0}^\infty \frac{a_n}{n!}z^n, \quad \text{respectively} \quad G(z)=\sum_{n=0}^\infty a_n z^n, 
\end{displaymath} 
that is annihilated by some non-zero differential operator $L \in \overline \IQ[z, \partial]$, and whose coefficients~$a_n$ satisfy the following growth condition: letting $d_n \geq 1$ denote, for each $n \geq 0$, the smallest integer such that $d_na_0, \ldots, d_na_n$ are algebraic integers, there exists a real number~$C>0$ such that the inequalities $\abs{\sigma({a_n})} \leq C^n$ and $d_n \leq C^n$ hold\footnote{In Siegel's original definition of $E$-functions~\cite[p.\,223]{Siegel1929}, the coefficients and their denominators are required to grow at most as $(n!)^\eps$ for any $\eps>0$. In the presence of a differential equation, this condition is believed to be equivalent to the more restrictive one from Definition~\ref{Def:EFunctionGFunction}, but this is still unknown. At any rate, our example also answers in the negative Siegel's question for his a priori larger class of $E$-functions. } for all $n\geq 1$ and all $\sigma\in\Gal(\overline \IQ/\IQ)$.
\end{defn}

\vspace{4mm}
\begin{para}
\begin{par}
Setting all coefficients $a_n$ equal to $1$ yields the exponential series as an example of an~$E$\nobreakdash-function and the geometric series as an example of a $G$\nobreakdash-series, thus the names. According to a theorem of Eisenstein \cite{Eisenstein}, if a power series $\sum a_nz^n \in \overline \IQ[\![z]\!]$ is algebraic over $\overline \IQ(z)$, then there exists an integer $d \geq 1$ such that $d^na_n$ is an algebraic integer for all $n\geq 1$; since such series also have a positive  radius of convergence and satisfy a differential equation, they are examples of $G$\nobreakdash-series. The set of $E$\nobreakdash-functions contains the ring of polynomials $\overline \IQ[z]$ and is stable under sums, products, and derivation, so $E$\nobreakdash-functions form a differential subalgebra of~$\overline \IQ[\![ z]\!]$. The existence of a non-trivial differential equation annihilating the series $E(z)$ implies that the coefficients~$a_n$ generate a finite field extension of $\IQ$, a fact that can be used to show that any~$E$\nobreakdash-function is a $\overline \IQ$-linear combination of $E$\nobreakdash-functions with rational coefficients; the same is true for~$G$\nobreakdash-series as well. It follows from the growth condition on the coefficients and their denominators that~$E$\nobreakdash-functions have infinite radius of convergence, so they can be interpreted as entire functions, whereas $G$\nobreakdash-series have a positive but finite radius of convergence unless they are polynomials. In fact, the intersection of $E$\nobreakdash-functions and $G$\nobreakdash-series is the ring of polynomials with algebraic coefficients. If $G(z)$ is a $G$\nobreakdash-series and $h(z) \in \overline \IQ(z)$ is a rational function with~$h(0)=0$, then the composite $G(h(z))$ is again a $G$\nobreakdash-series. In contrast, if $E(z)$ is an~$E$\nobreakdash-function, then so is $E(\lambda z)$ for $\lambda \in \overline\IQ$, but not $E(z^n)$ for $n\geq 2$ unless $E$ is a polynomial.
\end{par}
\end{para}

\begin{para}\label{Par:IntroductionAlgebrasRHE}
\begin{par}
Following Andr\'e \cite{GevreyI}, we consider the differential subalgebras $\cE$ and $\cG$ of~$\cM$ consisting of those functions
\begin{equation}\label{Eqn:FormOfElementsOfE}
f(z) = \sum_{i=1}^n c_iz^{a_i}\log(z)^{b_i} F_i(z)    
\end{equation}
with \emph{rational} exponents $a_i\in \IQ$, integers $b_i\geq 0$, complex coefficients $c_i$, and where the~$F_i$ are~$E$\nobreakdash-functions and  $G$\nobreakdash-series respectively. In~\cite{GevreyI}, elements of $\cE$ are called \emph{holonomic arithmetic Nilsson\nobreakdash--Gevrey series} of order~$-1$, the word \emph{order} referring to the power $(n!)^{-1}$ in the definition of $E$\nobreakdash-functions; accordingly, elements of~$\cG$ are called \emph{holonomic arithmetic Nilsson\nobreakdash--Gevrey series} of order $0$. Algebraic functions over~$\overline \IQ(z)$ can be viewed as elements of $\cG$, in the sense that every convergent Laurent series with monodromy that is algebraic over $\overline \IQ(z)$ belongs to~$\cG$. Lemma~\ref{Lem:IndependenceSeriesAndE} below shows that any~$\IC$\nobreakdash-linear combination of $E$\nobreakdash-functions that has algebraic coefficients is itself an $E$\nobreakdash-function, and similarly for $G$\nobreakdash-series. Hence, the expression \eqref{Eqn:FormOfElementsOfE} is unique up to the obvious modifications.
\end{par}
\end{para}

\begin{lem}\label{Lem:IndependenceSeriesAndE}
Let $K$ be a subfield of $\IC$ and $V\subseteq K[\![z]\!]$ a $K$-linear subspace. The equality
$$(\IC\otimes_K V) \cap K[\![z]\!] = V$$
holds, the intersection taking place in $\IC[\![z]\!]$. 
\end{lem}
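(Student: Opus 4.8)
The plan is to prove this by descent along the field extension $K\subseteq\IC$, comparing coefficients power by power in the variable $z$. Fix once and for all a basis $(e_\alpha)_{\alpha\in A}$ of $\IC$ as a $K$-vector space, normalised so that $e_{\alpha_0}=1$ for a distinguished index $\alpha_0\in A$. Before proving the equality I would first check that the left-hand side makes sense as a subspace of $\IC[\![z]\!]$, i.e. that the natural map $\IC\otimes_K V\to\IC[\![z]\!]$ is injective. Indeed, since $\IC$ is free over $K$, every element of $\IC\otimes_K V$ has a unique expression $\sum_\alpha e_\alpha\otimes w_\alpha$ with $w_\alpha\in V$ almost all zero, and its image is $\sum_\alpha e_\alpha w_\alpha$. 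Denoting by $[w_\alpha]_n\in K$ the coefficient of $z^n$ in $w_\alpha$, the coefficient of $z^n$ in the image is $\sum_\alpha e_\alpha\,[w_\alpha]_n$; the $K$-linear independence of the $e_\alpha$ forces this to vanish, for every $n$, only when all $[w_\alpha]_n=0$, hence only when every $w_\alpha=0$. This identifies $\IC\otimes_K V$ with its image and makes the asserted intersection meaningful.

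The inclusion $V\subseteq(\IC\otimes_K V)\cap K[\![z]\!]$ is immediate, since $V$ sits inside both factors via $v\mapsto 1\otimes v=v$. For the reverse inclusion, let $f$ lie in the intersection and write it as $f=\sum_\alpha e_\alpha w_\alpha$ with $w_\alpha\in V$. The coefficient of $z^n$ in $f$ equals $\sum_\alpha e_\alpha\,[w_\alpha]_n$, which is nothing but the expansion of that coefficient in the basis $(e_\alpha)_\alpha$ with coordinates $[w_\alpha]_n\in K$. By hypothesis $f\in K[\![z]\!]$, so this coefficient lies in $K=K\cdot e_{\alpha_0}$, and the uniqueness of the basis expansion forces $[w_\alpha]_n=0$ for every $\alpha\neq\alpha_0$. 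As this holds for all $n\geq 0$, we conclude that $w_\alpha=0$ for $\alpha\neq\alpha_0$, whence $f=e_{\alpha_0}w_{\alpha_0}=w_{\alpha_0}\in V$, as desired.

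There is no serious obstacle here: the statement is a routine descent, and the only point requiring care is to keep straight the two uses of the basis $(e_\alpha)_\alpha$ — first to realise $\IC\otimes_K V$ concretely inside $\IC[\![z]\!]$, and then, after normalising $e_{\alpha_0}=1$, to run the coefficientwise comparison that pins down $f$ in $V$. One could equally phrase the whole argument without coordinates by choosing a $K$-linear complement $W$ of $K$ in $\IC$: the decomposition $\IC=K\oplus W$ induces $\IC[\![z]\!]=K[\![z]\!]\oplus W[\![z]\!]$ and $\IC\otimes_K V=V\oplus(W\otimes_K V)$, under which the image of $W\otimes_K V$ lands in $W[\![z]\!]$, so that any element of $\IC\otimes_K V$ lying in $K[\![z]\!]$ must have trivial $W[\![z]\!]$-component and therefore belong to $V$. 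The explicit basis computation seems the most transparent, and I would present that version.
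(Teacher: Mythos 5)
Your proof is correct and takes essentially the same approach as the paper's: write the element in question in terms of $K$-linearly independent complex scalars one of which is $1$, then compare coefficients of each power of $z$ and invoke linear independence to force all components outside $V$ to vanish. The only differences are cosmetic — the paper chooses finitely many independent scalars per element rather than fixing a global basis of $\IC$ over $K$, and it leaves the injectivity of $\IC\otimes_K V\to\IC[\![z]\!]$ implicit, a point you rightly make explicit.
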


\begin{proof}
The inclusion $\supseteq$ is obvious. Conversely, each element $f\hspace{-.5mm}\in\hspace{-.5mm} \IC\otimes_K V \subseteq \IC[\![z]\!]$ can be written~as
$$f = \lambda_0f_0 + \sum_{n=1}^N \lambda_nf_n,$$
where $\lambda_0=1, \lambda_1, \ldots,\lambda_N$ are $K$-linearly independent complex numbers, and $f_0,\ldots,f_N$ are elements of $V$. Denote by $c_k$ and by $c_{nk}$ the coefficients of $z^k$ in $f$ and $f_n$ respectively. If we now assume that $f$ lies in $K[\![z]\!]$, we obtain $K$-linear relations
\[
0 = \lambda_0(c_{0k}-c_k) +  \sum_{n=1}^N \lambda_nc_{nk},
\]
from which the equalities $c_{0k}=c_k$ and $c_{nk}=0$ for $n\geq 1$ follow. Hence, $f=f_0$ lies in $V$.
\end{proof}

\begin{defn}\label{Def:GOperatorEoperator}
A differential operator $L \in \overline \IQ(z)[\partial]$ is said to be a \emph{$G$-operator} if $L$ admits a basis of solutions in the algebra $\cG$. A differential operator $L\in \overline \IQ[z,\partial]$ is said to be an \emph{$E$-operator} if its Fourier transform $\FT(L)$ is a $G$-operator.
\end{defn}

\begin{para}\label{Par:DefinitionsOfGoperator}
\begin{par}
We should explain why this definition of $G$-operators is equivalent to other definitions found in the literature. First of all, since $\cG$ contains the field of rational functions~$\overline \IQ(z)$, an operator $L$ is a $G$-operator if and only if $PLQ$ is a $G$-operator for all non-zero rational functions~$P, Q\in \overline\IQ(z)$. In particular, we can multiply any $G$-operator $L \in \overline\IQ(z)[\partial]$ with an appropriate polynomial on the left in order to obtain a $G$-operator with polynomial coefficients, whose Fourier transform is then an $E$-operator. Since $G$-series are stable under derivation, every left or right factor of a $G$-operator is again a $G$-operator. From Fuchs's criterion, it follows that a $G$-operator is at most regular singular at $z=0$ with rational exponents, and hence admits at least one non-zero solution of the form~$z^af(z)$ with $a\in \IQ$ and a power series~$f\in \overline\IQ[\![z]\!] \cap \cG$, which is then a $G$\nobreakdash-series by Lemma~\ref{Lem:IndependenceSeriesAndE}.
\end{par}

\begin{par}
A commonly used definition, for instance by André \cite[IV\,5.2, p.\,76]{Gfunctions}, is that $L\in \overline\IQ(z)[\partial]$ is a~$G$\nobreakdash-operator if it satisfies Galochkin's condition, or equivalently Bombieri's condition 
$$\mathrm{Galochkin:}\:\:\:\sigma(L)<\infty \qquad \Longleftrightarrow \qquad \mathrm{Bombieri:}\:\:\:\rho(L)<\infty.$$
The \emph{size} $\sigma(L) \in \IR_{\geq 0}\cup\{\infty\}$ measures the growth in height of the coefficients of formal solutions of $L$, whereas the \emph{radius} $\rho(L) \in \IR_{\geq 0}\cup\{\infty\}$ measures the radii of convergence of $p$-adic solutions. The equivalence of these two conditions is a difficult theorem in itself; see~\hbox{\cite[IV, Th.\,5.2]{Gfunctions}} or~\hbox{\cite[VII, Th.\,2.1]{Dwork}.} If $L$ has finite size, it admits a basis of solutions in the algebra $\cG$ by \hbox{\cite[V\,6.6, Cor.]{Gfunctions}}, and hence is a $G$-operator in our sense too.
\end{par}

\begin{par}
Conversely, a theorem of Chudnovsky \cite[Th.\,III]{chudnovsky} says that an operator of minimal order that annihilates a given non-zero $G$\nobreakdash-series has finite size; see \cite[VIII, Th.\,1.5]{Dwork}. In particular, all irreducible~$G$\nobreakdash-operators have finite size. Taking into account that the size of a product of operators is bounded by the sizes of the factors (\cite[IV\,4.1, Lem\,2 and IV\,4.2, Prop.]{Gfunctions}), it follows that all~$G$\nobreakdash-operators have finite size. Both definitions are therefore equivalent. 
\end{par}
\end{para}

\begin{thm}[André, Chudnovsky, Katz]\label{Thm:PropertiesOfGoperators}  $G$-operators satisfy the following properties: 
\begin{enumerate}
    \item Every $G$-series, and more generally, every element of $\cG$ is annihilated by a $G$\nobreakdash-ope\-ra\-tor.
    \item Products and adjoints of $G$-operators are $G$-operators, and every left or right factor of a $G$-operator is a $G$-operator.
    \item $G$-operators have regular singularities on $\IP^1$, all with rational local exponents.
    \item If $L$ is a $G$-operator, then so is $[h]^\ast L$ for any non-constant rational function $h\in \overline\IQ(z)$.
\end{enumerate}
\end{thm}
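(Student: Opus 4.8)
The organizing principle throughout is the equivalence established in \ref{Par:DefinitionsOfGoperator}: a differential operator is a $G$-operator if and only if it has finite size $\sigma(L)<\infty$, equivalently finite $p$-adic radius $\rho(L)<\infty$. I would prove each assertion in whichever of these three guises is most convenient, using freely that $\cG$ is a differential $\IC$-algebra containing $\overline\IQ(z)$ and all $G$-series, together with Lemma~\ref{Lem:IndependenceSeriesAndE}. I would begin with (2), since it produces tools used elsewhere. For \emph{factors} I would reprise the argument indicated in \ref{Par:DefinitionsOfGoperator}: writing $L=L_1L_2$ with $L$ a $G$-operator, the solution space of the right factor $L_2$ is contained in that of $L$, hence in $\cG$; and the $\overline\IQ(z)$-linear map $f\mapsto L_2f$ sends the solution space of $L$ onto that of $L_1$ (a dimension count gives surjectivity, the kernel being $\mathrm{Sol}(L_2)$), landing in $\cG$ because $\cG$ is a differential $\overline\IQ(z)$-algebra. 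For \emph{products} I would invoke subadditivity of the size under composition, $\sigma(L_1L_2)\leq\sigma(L_1)+\sigma(L_2)$, so finiteness of the factors forces finiteness of the product. For \emph{adjoints} I would pass to the differential module $M=\overline\IQ(z)[\partial]/\overline\IQ(z)[\partial]L$ and use that $L^\ast$ is, up to equivalence, a cyclic-vector operator of the dual module $M^\vee$; since the class of modules of finite size is closed under duality \cite{Gfunctions}, the operator $L^\ast$ is again a $G$-operator.

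For (1) the heart of the matter is Chudnovsky's theorem, which disposes of a single nonzero $G$-series by producing a finite-size minimal annihilator. To handle an arbitrary $f=\sum_i c_iz^{a_i}\log(z)^{b_i}F_i$ in $\cG$, I would show that the collection of functions annihilated by \emph{some} $G$-operator is stable under sums and products and contains $z^a$ for $a\in\IQ$, $\log z$, and every $G$-series. Stability under sums is formal: the least common left multiple of $G$-operators has solution space the sum of the individual ones, still inside $\cG$, so it is a $G$-operator by definition. Stability under products comes from realizing a product of solutions as a solution of the symmetric product of the two operators, whose size is finite by subadditivity under tensor product \cite{Gfunctions}. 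Since each $z^{a_i}\log(z)^{b_i}$ is a solution of $(z\partial-a_i)^{b_i+1}$, all of whose solutions lie in $\cG$, these closure properties exhibit $f$ as annihilated by a $G$-operator. I would run the analogous reduction for (4): by construction the solutions of $[h]^\ast L$ are the composites $f\circ h$ of solutions $f$ of $L$, and the composite of an element of $\cG$ with a rational function over $\overline\IQ$ is again in $\cG$ — for the $G$-series part this is the composition property recalled earlier, while the monomial factors become $h^a\log(h)^b$, still of the permitted shape. Thus $[h]^\ast L$ inherits a basis of solutions in $\cG$; alternatively, one argues on the arithmetic side that pullback along a rational map over $\overline\IQ$ preserves finiteness of the radius \cite{Gfunctions}.

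I expect (3) to be the genuine obstacle, and the step that accounts for the appearance of Katz's name. The subtlety is that the definition of a $G$-operator, and the finite-size condition, constrain the behaviour of $L$ only at $z=0$, whereas (3) asserts regularity with rational exponents at \emph{every} point of $\IP^1$ — a global statement that cannot be extracted locally. The plan is to cross over to the $p$-adic side: having finite radius $\rho(L)<\infty$ (Bombieri's condition) forces $L$ to be \emph{globally nilpotent}, that is, to have nilpotent $p$-curvature for almost all primes $p$, and Katz's theorem on globally nilpotent connections then yields that $L$ has only regular singularities on $\IP^1$ with rational local exponents \cite{Gfunctions, Dwork}. The delicate ingredient — and the reason this part sits deeper than the other three — is exactly this passage from a growth condition at one point to global nilpotence, which rests on the hard equivalence of Galochkin's and Bombieri's conditions already invoked in \ref{Par:DefinitionsOfGoperator}.
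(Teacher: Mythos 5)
Your treatments of (1), (2), and (3) are correct and essentially coincide with the paper's proof: Chudnovsky's theorem plus closure of solutions of $G$-operators under sums, products, and the functions $z^a\log(z)^b$ for (1); the solution-theoretic factor argument, size subadditivity for products, and module duality for adjoints in (2); and the passage from Bombieri's condition through nilpotence of $p$-curvature to Katz's global nilpotence theorem for (3). Contrary to your expectation, however, (3) is not where the real subtlety of the proof lies — the paper, like you, disposes of it by citation; the delicate point is (4).

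There is a genuine gap in your argument for (4). Your main line rests on the claim that the composite of an element of $\cG$ with a non-constant rational function $h\in\overline\IQ(z)$ is again in $\cG$, "by the composition property recalled earlier". But that property requires $h(0)=0$: elements of $\cG$ are by definition expansions at $z=0$ whose power-series parts are $G$-series, and $G(h(z))$ only makes sense as such an expansion (and is only known to be a $G$-series) when the inner function vanishes at the origin. If $h(0)=a\neq 0$ (e.g.\ a translation $z\mapsto z+a$) or $h$ has a pole at $0$ (e.g.\ $z\mapsto z^{-1}$), then the solutions of $[h]^\ast L$ near $0$ are \emph{re-expansions} of solutions of $L$ around the point $a$, respectively around $\infty$, and there is no formal reason these lie in $\cG$: bounding the heights and denominators of the Taylor coefficients of a $G$-function re-expanded at another algebraic point is precisely the content of André's permanence theorem, a non-trivial arithmetic input. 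This is exactly how the paper argues: it first settles the case $h(0)=0$ by your composition argument, then reduces the general case to $h(z)=z-a$ and $h(z)=z^{-1}$, where — after using statement (2) to pass to irreducible $L$ — it invokes the permanence theorem \cite[p.\,706]{GevreyI}. Your one-clause fallback ("pullback along a rational map preserves finiteness of the radius") points in the right direction, and for Möbius maps such an invariance can indeed be established on the arithmetic side, but as written it is an unproven assertion with a vague citation, and its content is of essentially the same depth as the theorem you are omitting. To repair the proof: restrict your composition step to $h(0)=0$, factor a general $h$ as a Möbius transformation composed with a map vanishing at $0$, and supply a genuine argument or precise reference for the Möbius case.
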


\begin{proof}

\begin{par} Once we know that the various definitions of $G$-operators are equivalent, statement (1) for $G$-series is due to Chudnovsky; it extends to all elements of $\cG$ since $z^a\log(z)^b$ is a solution of a $G$\nobreakdash-operator for $a \in \IQ$ and $b \in \IZ_{\geq 0}$, and sums and products of solutions of~$G$\nobreakdash-operators are again annihilated by $G$-operators \cite[Lem.\,3.6.1]{GevreyI}; see the case $s=0$ of Andr\'e's purity theorem \cite[p.\,706]{GevreyI}. More precisely, any non-zero operator of minimal order annihilating a given element of $\cG$ is a $G$-operator.
\end{par}

\begin{par}
Statement (2), but formulated in terms of radii of differential modules, is \cite[IV\,3.3, Lem.\,2]{Gfunctions}. To translate from modules to operators, we first notice that the size of an operator is defined in loc.\,cit. as the size of the associated module. With that in mind, it suffices to observe that with any product of operators $L=L_1L_2$ is associated a short exact sequence of differential modules, and that if the generic fibre of a $\sD_{\IA^1}$-module is given by an operator $L\in \overline\IQ[z, \partial]$, then the generic fibre of its dual is given by the adjoint operator $L^\ast$; see \cite[1.5]{KatzGalois}.
\end{par}

\begin{par}
Statement (3) relies on Katz's global nilpotence theorem \hbox{\cite[III, Th.\,6.1]{Dwork}}, which says that a differential operator with nilpotent $p$-curvature for almost all $p$ has regular singularities with rational exponents. To apply it, one first shows using Bombieri's condition that the generic radius of convergence of solutions of a $G$-operator at a place dividing $p$ is bigger than~$p^{-1/(p-1)}$ for almost all~$p$, and then that this bound implies the nilpotence of $p$-curvature; see \cite[IV\,5.3]{Gfunctions}. 
\end{par}

\begin{par}
As to statement (4), let $L$ be a $G$-operator and first suppose that the rational function~$h$ satisfies $h(0)=0$. The composite of any $G$\nobreakdash-series with $h$ is again a $G$\nobreakdash-series, and we can interpret $\log(h(z))$ and $h(z)^a$ as elements of $\cG$. Hence, if $g \in \cG$ is a non-zero solution of~$L$, then the composite $g(h(z)) \in \cG$ is a non-zero solution of $[h]^\ast L$, so $[h]^\ast L$ admits indeed a basis of solutions in $\cG$. To conclude, it suffices to show that statement (4) also holds for~$h(z)=z^{-1}$ and $h(z)=z-a$ with $a\in \overline \IQ$. Since in these cases $h$ is invertible and in view of statement~(2) we can suppose that $L$ is irreducible, this follows from Andr\'e's permanence theorem \cite[p.\,706]{GevreyI}.
\end{par}
\end{proof}

\begin{para}\label{Par:PermanenceProperty}
In the introduction to \cite{Dwork}, Dwork et al.\@ define $G$-operators as those $L\in \overline \IQ(z)[\partial]$ that admit a basis of solutions consisting of $G$\nobreakdash-series around some algebraic point. In view of statement~(4) of Theorem \ref{Thm:PropertiesOfGoperators}, this is coherent with the previous definitions. Besides, the statement for the map $h(z)=z^{-1}$ can be rephrased as follows. Pick $g \in \cG$, and regard $g\circ h$ as a holomorphic function on the half-plane $\{z\in \IC\tq \Re(z)>C\}$ for some sufficiently large~\hbox{$C>0$}. Let $L$ be a $G$-operator annihilating $g$, so that $[h]^\ast L$ annihilates $g\circ h$. The function $g\circ h$ can be analytically continued to any simply connected open subset of $\IC$ that avoids the finitely many singularities of $[h]^\ast L$. In particular, once we decide how to circumvent them, we can extend~$g \circ h$ to a sector $\{z\in \IC\tq \Re(z)>0 \mbox{ and } \abs{z}<r\}$ for some sufficiently small $r>0$. Statement~(4) of Theorem \ref{Thm:PropertiesOfGoperators} implies that this analytic continuation of $g\circ h$ belongs to $\cG$.
\end{para}

\begin{thm}[André]\label{Thm:PropertiesOfEoperators} $E$-operators satisfy the following properties: 
\begin{enumerate}
    \item Every $E$\nobreakdash-function, and more generally, every element of $\cE$ is annihilated by an $E$\nobreakdash-ope\-ra\-tor. Conversely, every $E$-operator admits a basis of solutions in $\cE$. 
    
    \item Products and adjoints of $E$-operators are again $E$-operators. 
    
    \item The non-trivial singularities of an $E$-operator are contained in $\{0,\infty\}$, and $0$ is at worst a regular singularity, whereas $\infty$ is in general irregular with slopes in $\{0, 1\}$. 
\end{enumerate}
\end{thm}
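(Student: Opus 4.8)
The plan is to deduce all three statements from the defining equivalence ``$L$ is an $E$-operator if and only if $\FT(L)$ is a $G$-operator'' together with Theorem~\ref{Thm:PropertiesOfGoperators}, transporting each property of $G$-operators through the Fourier transform. On the level of operators $\FT$ is a ring automorphism of the Weyl algebra, and on the level of functions it is realised by the Laplace transform, which interchanges the factor $(n!)^{-1}$ in the definition of $E$-functions with its absence in $G$-series and thereby sets up a dictionary between the algebras $\cE$ and $\cG$. I would treat the formal closure properties~(2) first, then the local statement~(3), and finally the correspondence~(1), which carries the genuine analytic content.

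For statement~(2), the product case is immediate: since $\FT$ is a ring homomorphism, $\FT(L_1L_2) = \FT(L_1)\FT(L_2)$ is a product of $G$-operators, hence a $G$-operator by Theorem~\ref{Thm:PropertiesOfGoperators}(2), so $L_1L_2$ is an $E$-operator. For adjoints, a direct check on the generators $z$ and $\partial$ yields the identity $\FT(L^\ast) = [h]^\ast\bigl((\FT L)^\ast\bigr)$ with $h(z) = -z$, where one uses that both $\FT \circ (-)^\ast$ and $(-)^\ast \circ \FT$ are anti-automorphisms determined by the images of $z$ and $\partial$. Granting this, if $L$ is an $E$-operator then $(\FT L)^\ast$ is a $G$-operator by Theorem~\ref{Thm:PropertiesOfGoperators}(2), and applying $[h]^\ast$ keeps it a $G$-operator by Theorem~\ref{Thm:PropertiesOfGoperators}(4); as this equals $\FT(L^\ast)$, the operator $L^\ast$ is again an $E$-operator.

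For statement~(3), I would run the stationary-phase (local Fourier transform) dictionary on $G = \FT(L)$, which by Theorem~\ref{Thm:PropertiesOfGoperators}(3) is regular singular at every point of $\IP^1$. A finite singularity of $L$ at a point $c \neq 0$ contributes an irregular, slope-$1$ piece (with exponential factor $e^{cz}$) at infinity of $\FT(L)$; as $G$ is regular at infinity, $L$ has no finite singularity other than possibly $0$. The local transform $\mathcal{F}^{(0,\infty)}$ takes the slope $s$ of $L$ at $0$ to the slope $s/(s+1)$ of $G$ at infinity, so regularity of $G$ there forces $s = 0$ and $L$ is regular at $0$. Finally, $\mathcal{F}^{(\infty,0)}$ takes a slope $s<1$ of $L$ at infinity to the slope $s/(1-s)$ of $G$ at $0$, and $\mathcal{F}^{(\infty,\infty)}$ takes a slope $s>1$ of $L$ at infinity to the slope $s/(s-1)$ of $G$ at infinity; since $G$ is regular both at $0$ and at infinity, $L$ has no slope in $(0,1)$ and none exceeding $1$ at infinity, while the slope-$1$ parts are exactly those dual to the finite singularities of $G$. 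Hence the slopes of $L$ at infinity lie in $\{0,1\}$, as claimed.

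For statement~(1), I would invoke the Laplace transform $\mathcal{L}$, which intertwines the two operator actions---so that $L \cdot f = 0$ holds if and only if $\FT(L)\cdot \mathcal{L}(f) = 0$---and restricts to a bijection between the solution spaces of $L$ lying in $\cE$ and those of $\FT(L)$ lying in $\cG$. Granting this, statement~(1) reduces to its $G$-analogue. Given $f \in \cE$, the transform $\mathcal{L}(f)$ lies in $\cG$ and is annihilated by some $G$-operator by Theorem~\ref{Thm:PropertiesOfGoperators}(1); after clearing denominators to obtain polynomial coefficients (see~\ref{Par:DefinitionsOfGoperator}) and applying $\FT^{-1}$, one gets an $E$-operator annihilating $f$. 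Conversely, if $L$ is an $E$-operator then $\FT(L)$ admits, by the very definition of $G$-operator, a basis of solutions in $\cG$, and transporting it back through $\mathcal{L}^{-1}$ yields a basis of solutions of $L$ in $\cE$. The hard part is precisely the construction of this dictionary: one must control convergence, treat the ramified factors $z^a\log(z)^b$ at $0$ and at infinity, and match solution spaces even though $\FT$ swaps the degrees in $z$ and $\partial$ rather than preserving the order---so that the generic ranks of $L$ and $\FT(L)$ differ. This is the core of André's theory, and it is cleanest to formulate the matching at the level of the associated $\sD_{\IA^1}$-modules, where $\FT$ is an equivalence of categories and the correspondences ``$E$-operator $\leftrightarrow$ $G$-operator'' and ``$\cE \leftrightarrow \cG$'' become two aspects of a single statement.
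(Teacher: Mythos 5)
Your treatment of statement (2) is the paper's own argument: the same interplay between $\FT$, adjunction, and the pullback $[-z]^\ast$ (the paper writes the key identity as $\FT(L)^\ast=[-z]^\ast\FT(L^\ast)$; since $[-z]^\ast$ is an involution, this is exactly the identity you check on generators), combined with parts (2) and (4) of Theorem~\ref{Thm:PropertiesOfGoperators}. For statement (3) you take a genuinely different route: the paper simply cites Andr\'e \cite[Th.\,4.3\,(i),\,(ii),\,(iv)]{GevreyI}, whereas you re-derive the constraints from the slope calculus of the local Fourier transforms applied to the everywhere regular singular operator $\FT(L)$. The slope formulas you quote ($s\mapsto s/(s+1)$, $s\mapsto s/(1-s)$, $s\mapsto s/(s-1)$, and non-trivial vanishing cycles at $c\neq 0$ producing $e^{cz}$-twisted slope-one pieces at infinity) are the standard ones, so this is a sound alternative, at the cost of invoking the stationary-phase machinery of Katz--Laumon--Malgrange that the paper does not need at this point.

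Statement (1) is where there is a genuine gap, and it is precisely the trap the paper flags in \ref{Par:LaplaceAndFourier}. The dictionary you posit --- ``$Lf=0$ if and only if $\FT(L)\sL(f)=0$'', with $\sL$ restricting to a bijection between solutions of $L$ in $\cE$ and solutions of $\FT(L)$ in $\cG$ --- is false. The intertwining holds only modulo polynomials, i.e.\ in $\cG_0=\cG/\IC[z]$ (Proposition~\ref{Pro:LaplaceSummary}); the culprit is the constant term $h(0)$ in \eqref{Eqn:FourierLaplaceRule2bis}. The paper's counterexample: $L=(z-1)\partial-z$ annihilates $(z-1)e^z\in\cE$, yet $\sL((z-1)e^z)=(2-z)(z-1)^{-2}$ is \emph{not} a solution of $\FT(L)$, and $\FT(L)$ is not even a $G$-operator. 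Your forward argument (every $f\in\cE$ is annihilated by an $E$-operator) survives this correction: if $G_1\in\overline\IQ[z,\partial]$ annihilates a lift of $\sL(f)$ to $\cG$, then $\sL(\FT^{-1}(G_1)f)=G_1\sL(f)=0$ holds in $\cG_0$, and injectivity of $\sL\colon\cE\to\cG_0$ forces $\FT^{-1}(G_1)f=0$.

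The converse, however --- ``transporting a basis of solutions of $\FT(L)$ in $\cG$ back through $\sL^{-1}$ yields a basis of solutions of $L$ in $\cE$'' --- fails at the operator level, and not merely for bookkeeping reasons. Take $L=z\partial(\partial-1)$, an $E$-operator of order $2$: its Fourier transform $-\partial z(z-1)$ is a $G$-operator of order $1$ whose solution space in $\cG$ is spanned by $(z(z-1))^{-1}$ alone, so transport can produce at most one solution of $L$ (namely $e^z-1$), while a basis requires two; the missing solution, the constants, corresponds to $1/z$, on which $\FT(L)$ evaluates to the non-zero constant $-1$, so it is a solution only in $\cG_0$. What is true is that solutions of $L$ in $\cE$ correspond to solutions of $\FT(L)$ in $\cG_0$, and bounding those from below requires killing $\Hom_\sD(-,\IC[z])$ and $\Ext^1_\sD(-,\IC[z])$; this works only after replacing $\sD_{\IA^1}/\sD_{\IA^1}\FT(L)$ by $A=\FT(j_\ast M)$, on which $\partial$ acts bijectively, and then matching dimensions via Katz's rank equality (Proposition~\ref{Pro:PhaseStationnaire}). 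That is exactly the content of Proposition~\ref{Pro:WishListC}, (1)\,$\Rightarrow$\,(2), in the paper; your closing sentence gestures at this $\sD$-module formulation, but it is the whole substance of the converse rather than a deferrable detail, and the paper sidesteps it here by quoting Andr\'e's purity theorem \cite[Th.\,4.3\,(iii)]{GevreyI}, whose proof is not a formal transport argument.
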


\begin{proof}
The first part of statement (1) is \cite[Th.\,4.2]{GevreyI} and the second one is the case \hbox{$s=-1$} of Andr\'e's purity theorem \cite[Th.\,4.3\,(iii)]{GevreyI}. Statement (2) follows from the analogous one of Theorem \ref{Thm:PropertiesOfGoperators}, since Fourier transform is a ring morphism, which is compatible with adjoints in that the equality 
\[
\FT(L)^\ast=[-z]^\ast\FT(L^\ast)
\] holds. Finally, statement (3) is \cite[Th.\,4.3\,(i),\,(ii),\,(iv)]{GevreyI}. Recall that a singularity of an operator $L$ is said to be trivial whenever $L$ admits a basis of holomorphic solutions around the point in question. 
\end{proof}

\begin{para}\label{Par:LaplaceTransform}
We now explain the relation between the Fourier transform of differential operators and the Laplace transform. Classically, the \emph{Laplace transform} of a suitable holomorphic function $f$ is the complex-valued function $\mathscr L f$ given by the integral
\begin{equation}\label{Eqn:LaplaceTransform}
(\mathscr L f)(z) = \int_0^\infty f(w)e^{-zw}dw,
\end{equation}
which is assumed to converge on the half-plane $\Re(z)> C$ for some large enough real number~$C$. The \emph{inverse Laplace transform} of a holomorphic function $g$ on $\Re(z)> C$ is given~by
$$(\mathscr L^{-1} g)(z) = \frac1{2\pi i}\int_{C'-i\infty}^{C'+i\infty}g(w)e^{zw}dw$$
for any $C'>C$, again under appropriate convergence conditions. Using integration by parts and differentiating under the integral sign, we obtain the relations
$$\sL(zf) = -\partial\sL(f) \qqet \sL(\partial f) = z\sL(f)-f(0),$$
provided the constant term $f(0) = \lim_{z\to 0}f(z)$ exists. The integral \eqref{Eqn:LaplaceTransform} converges for functions~$f \in \cE$ of the form $f(z)=z^a\log(z)^b F(z)$ for some $E$-function $F = \sum_{n=0}^\infty a_n z^n/n!$ and $a>-1$, with the same constant $C$ as in Definition \ref{Def:EFunctionGFunction}, and is explicitly given by
$$(\sL f)(z) = z^{-a-1}\sum_{k=0}^b \binom{b}{k}\log(z^{-1})^k\sum_{n=0}^\infty \frac{a_n \Gamma^{(b-k)}(a+n+1)}{n!} z^{-n},$$
where $\Gamma^{(m)}$ stands for the $m$-th derivative of the gamma function. Since the inner sum in this expression is a $\IC$-linear combination of $G$-series in the variable $z^{-1}$, taking the permanence property discussed in \ref{Par:PermanenceProperty} into account, we may regard $\mathscr L$ as a $\IC$-linear map from the subspace of~$\cE$ spanned by functions as above to $\cG$. 
\end{para}

\begin{para}\label{Par:LaplaceTransformExtended}
We wish to extend the Laplace transform to a linear map defined on the whole of~$\cE$. To do so, we need to define it on the differential ring $\mathcal R \subseteq \cE$ formed by $\IC$-linear combinations 
$$h = \sum_{i=1}^n c_iz^{a_i} \log(z)^{b_i}$$ 
with rational $a_i$ and non-negative integers $b_i$, in a consistent way with \eqref{Eqn:LaplaceTransform} whenever the integral converges. For this, we introduce primitives on $\cR$ by setting
$$S(z^a\log(z)^b)= \begin{cases}
\displaystyle \frac{z^{a+1}}{a+1}\sum_{k=0}^b\frac{b!}{(b-k)!}\frac{(-1)^k}{(a+1)^k}\log(z)^{b-k} & \mbox{ if $a\neq -1$,}\\[8mm]
\displaystyle \frac{\log(z)^{b+1}}{b+1}& \mbox{ if $a = -1$,}
\end{cases}$$ 
and extending $S$ to a linear map $S\colon \mathcal R\to \mathcal R$. We then define a Laplace transform on $\mathcal R$ as
\begin{equation}\label{Eqn:LaplaceTransform2}
\begin{array}{rcl}
\sL\colon \cR & \longrightarrow & \cR\\
h &  \longmapsto & z^n \sL(S^n h), 
\end{array}    
\end{equation}
where $n$ is any sufficiently large integer. The right-hand side of \eqref{Eqn:LaplaceTransform2} is indeed independent of the choice of $n$. In \cite[5.3]{GevreyI}, Andr\'e gives a slightly different definition, which agrees with ours up to polynomials. Direct computation shows that the map $\sL$ satisfies the relations
\begin{eqnarray}
\mathscr L(zh)(z) & = & -\partial \mathscr Lh(z) \label{Eqn:FourierLaplaceRule1bis} + P(z),\\
\mathscr L(\partial h)(z) & = & z\mathscr Lh(z)-h(0)\label{Eqn:FourierLaplaceRule2bis},
\end{eqnarray}
where $P \in \IC[z]$ is a polynomial depending on $h$, and $h(0)$ stands for the coefficient of $z^0\log(z)^0$.
\end{para}

\begin{para}
The combination of \eqref{Eqn:LaplaceTransform} and \eqref{Eqn:LaplaceTransform2} now yields a $\IC$-linear map \hbox{$\sL\colon \cE \to \cG$} by $z$-adic formal completion \cite[5.4]{GevreyI}. Namely, each element of $\cE$ can be written as a formal~sum
\begin{displaymath}
f=\sum_{\substack{a \in \IQ \\ b\in \IZ_{\geq 0}}} c_{a, b} z^a \log(z)^b, 
\end{displaymath} where $c_{a, b}$ are complex numbers, all zero except for finitely many values of $b$ and finitely many classes of $a$ modulo $\IZ$, and one sets 
\begin{equation}\label{eqn:generalLaplace}
    \sL(f)=\sum_{\substack{a \in \IQ \\ b\in \IZ_{\geq 0}}} c_{a, b} \sL(z^a \log(z)^b).
\end{equation} A degree inspection reveals that this map is injective, but not surjective; a $\IC$-linear complement to its image is given by the space of polynomials $\IC[z] \subseteq \cG$, which is clearly a $\IC[z,\partial]$\nobreakdash-submodule. It is thus convenient to view the Laplace transform as a $\IC$-linear bijection taking values in
$$\cG_0 = \cG/\IC[z].$$
As such, our construction of $\sL \colon \cE \to \cG_0$ agrees with Andr\'e's. The inverse of $\sL$ is described explicitly in \cite[2.2]{RivoalFischlerMicro} as a surjective map $\mathscr R_\infty\colon \cG \to \cE$ with kernel $\IC[z]$. Another advantage of taking $\cG_0$ as target of the Laplace transform is that the polynomial terms in \eqref{Eqn:FourierLaplaceRule1bis} and~\eqref{Eqn:FourierLaplaceRule2bis} disappear, as summarised in the following proposition.
\end{para}

\begin{prop}\label{Pro:LaplaceSummary}
The Laplace transform \eqref{eqn:generalLaplace} induces an isomorphism of complex vector spaces \hbox{$\sL\colon \cE \to \cG_0$} satisfying
\begin{equation}\label{Eqn:LaplaceAndFourierWish}
\sL( Lf) = \FT(L) \sL(f)
\end{equation}
for every $f \in \cE$ and every differential operator $L \in \IC[z, \partial]$.
\end{prop}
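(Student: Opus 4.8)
The plan is to treat the two assertions in turn, the isomorphism being a repackaging of what precedes and the intertwining relation \eqref{Eqn:LaplaceAndFourierWish} carrying the real content. For the isomorphism, recall that $\sL\colon\cE\to\cG$ was already shown to be injective by a degree inspection and to have image a $\IC$-linear complement of $\IC[z]$ in $\cG$. Composing with the projection $\pi\colon\cG\to\cG_0=\cG/\IC[z]$ then yields a bijection: its kernel is $\sL^{-1}(\im(\sL)\cap\IC[z])=0$ and its image is $\pi(\im\sL)=\cG_0$. Alternatively, one may simply invoke the explicit inverse $\mathscr R_\infty\colon\cG\to\cE$ of \cite{RivoalFischlerMicro}, whose kernel is exactly $\IC[z]$.

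For the relation, the first step is to reduce to the two elementary cases $L=z$ and $L=\partial$. Since $\FT$, extended $\IC$-linearly, is a ring homomorphism with $\FT(z)=-\partial$ and $\FT(\partial)=z$, and since multiplication by $z$ and application of $\partial$ both preserve the submodule $\IC[z]$ and hence descend to operators on $\cG_0$, it suffices to prove
$$\sL(zf)=-\partial\,\sL(f)\qqet\sL(\partial f)=z\,\sL(f)$$
in $\cG_0$ for every $f\in\cE$. Granting these, a monomial $L=z^m\partial^n$ is treated by applying the first relation $m$ times to the successive elements $z^{m-1}\partial^nf,\dots,\partial^nf$ of $\cE$, and then the second relation $n$ times to $\partial^{n-1}f,\dots,f$; this gives $\sL(z^m\partial^nf)=(-\partial)^mz^n\sL(f)=\FT(z^m\partial^n)\sL(f)$ in $\cG_0$, and linearity in $L$ then produces \eqref{Eqn:LaplaceAndFourierWish} for arbitrary $L\in\IC[z,\partial]$.

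It remains to establish the two elementary relations on all of $\cE$. They already hold on the two building blocks: on convergent functions $f=z^a\log(z)^bF(z)$ with $a>-1$ and $F$ an $E$-function they follow, up to the constant $f(0)$, from integration by parts and differentiation under the integral sign as in \ref{Par:LaplaceTransform}; on the ring $\cR$ they take the form \eqref{Eqn:FourierLaplaceRule1bis} and \eqref{Eqn:FourierLaplaceRule2bis}, with a polynomial $P\in\IC[z]$ and a constant $h(0)$ as the only discrepancies. Since a general $f\in\cE$ is, by the definition \eqref{eqn:generalLaplace}, the $z$-adically convergent formal sum $\sum_{a,b}c_{a,b}z^a\log(z)^b$ of monomials of $\cR$ to which $\sL$ is applied termwise, I would extend both relations by linearity and $z$-adic continuity. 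The $\partial$-discrepancy then contributes only the constant $f(0)$, while the $z$-discrepancy $P$ vanishes for every monomial with $a>-1$; as the exponents occurring in $f$ have the shape $a_i+n$ with $n\geq0$, only finitely many satisfy $a_i+n\leq-1$, so the total $z$-discrepancy is a finite sum of polynomials. Both corrections therefore lie in $\IC[z]$ and vanish in $\cG_0$, which is exactly what is needed.

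The main obstacle is precisely this last extension from $\cR$ and the convergent part to all of $\cE$: one has to verify that the termwise prescription defining $\sL$ on $\cE$ commutes with multiplication by $z$ and with $\partial$, and that summing the monomial relations over the infinite formal expansion of $f$ preserves the polynomial nature of the correction terms. Once the finiteness of the contributing low-order summands is confirmed — which rests on the fact that the power-series factor of an $E$-function starts in degree zero, so that only finitely many exponents of $f$ can be at most $-1$ — the two elementary relations descend to $\cG_0$ and the argument closes.
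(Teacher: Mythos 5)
Your proposal is correct and follows essentially the same route as the paper, which states this proposition as a summary of the preceding construction: injectivity by degree inspection with $\IC[z]$ as a complement to the image for the isomorphism, and the relations \eqref{Eqn:FourierLaplaceRule1bis}--\eqref{Eqn:FourierLaplaceRule2bis} whose polynomial corrections die in $\cG_0$, extended to all of $\IC[z,\partial]$ via the ring-homomorphism property of $\FT$. Your write-up merely makes explicit the details the paper leaves implicit (the reduction to $L=z,\partial$ and the finiteness of the correction terms coming from the finitely many exponents $a_i+n\leq -1$), and does so correctly.
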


\begin{para}\label{Par:LaplaceAndFourier}
If the equality \eqref{Eqn:LaplaceAndFourierWish} were to hold in $\cG$ rather than $\cG_0$, then the Laplace transform of an element of $\cE$ annihilated by an operator $L \in \overline\IQ[z,\partial]$ would be a solution in $\cG$ of the operator~$\FT(L)$ and vice versa, and we could have defined $E$-operators simply as those admitting a basis of solutions in the algebra $\cE$. By some miracle, $L$ and $\FT(L)$ would then have the same order. We can of course not just wish the polynomial correction terms away. An example of an operator that admits a basis of solutions in $\cE$ without being an~$E$\nobreakdash-operator is given in~\hbox{\cite[Rem., p.\,721]{GevreyI}}. The operator $L=(z-1)\partial -z$ admits the $E$\nobreakdash-function $E(z)=(z-1)e^z$ as a basis of solutions, but its Fourier transform 
$$\FT(L)= (1-z)\partial - (z+1)$$
is not a $G$-operator since it has an irregular singularity at infinity. A solution of $\FT(L)$ is given by the function $(z-1)^{-2}e^{-z}$, which is clearly not a $G$-series. The Laplace transform of~$E(z)$ is the $G$-series $G(z)= (2-z)(z-1)^{-2}$, which is however not a solution of $\FT(L)$. The reason for this annoying behaviour can be traced back to the constant term $h(0)$ in \eqref{Eqn:FourierLaplaceRule2bis}. The Fourier transform of $zL = (z-1)z\partial -z^2$ annihilates that constant, and hence also the~$G$\nobreakdash-series~$G(z)$, but it annihilates $(z-1)^{-2}e^{-z}$ as well, so $zL$ is still not an $E$-operator.
\end{para}

\vspace{14mm}
\section{\texorpdfstring{Differential modules associated with $E$-operators}{Differential modules associated with E-operators}}

\begin{par}
In this section, we recast the previous results about differential operators in terms of differential modules, and mitigate the problems raised in \ref{Par:LaplaceAndFourier}. To this end, we introduce following Katz \cite{Katz} two $\overline \IQ$-linear tannakian categories $\Conn_0(\IG_m)$ and $\RS_0(\IA^1)$ that are equivalent via Fourier transform. The former contains a category~$\bE$ of~$\sD_{\IG_m}$\nobreakdash-modules associated with~$E$\nobreakdash-functions, and the latter a category~$\bG_0$ of~$\sD_{\IA^1}$\nobreakdash-modules associated with $G$\nobreakdash-series. These categories also play an important role in the study of exponential motives~\cite{FJ}.
\end{par}

\begin{para}
All $\sD$-modules will be defined over the affine line $\IA^1 = \spec\overline\IQ[z]$ or the multiplicative group $\IG_m = \spec\overline\IQ[z,z^{-1}]$. They are thus differential modules over the ring $\overline\IQ[z]$ or~$\overline\IQ[z,z^{-1}]$, or in other words, left modules for one of the Weyl algebras
\begin{equation}\label{Eqn:InclusionOfWeilAlgebras}
\sD_{\IA^1} = \overline\IQ[z, \partial] \quad \subseteq \quad \sD_{\IG_m} = \overline\IQ[z,z^{-1}, \partial].
\end{equation}
We will tacitly suppose that $\sD$-modules are finitely generated holonomic left $\sD$-modules. We call a $\sD$\nobreakdash-module \emph{simple} if it has no submodules other than itself and the trivial module. Associated with the inclusion $j \colon \IG_m \hookrightarrow \IA^1$,  there is a pair of adjoint functors
$$j_\ast\colon \sD_{\IG_m}\mbox{-Mod} \to \sD_{\IA^1}\mbox{-Mod}\qqet j^\ast\colon \sD_{\IA^1}\mbox{-Mod} \to \sD_{\IG_m}\mbox{-Mod},$$
where $j_\ast$ is the transport of structure via the inclusion \eqref{Eqn:InclusionOfWeilAlgebras}, and $j^\ast$ is the localisation in $z$. The \emph{generic fibre} of a $\sD$-module $M$ is the $\overline\IQ(z)[\partial]$-module $\overline\IQ(z) \otimes_{\overline\IQ[z]} M$, on which the action of~$\partial$ on $M$ is extended through Leibniz's rule, and its \emph{rank} is the dimension of the generic fibre as a~$\overline\IQ(z)$\nobreakdash-vector space. The Fourier transform of a $\sD_{\IA^1}$-module $M$ is the~$\sD_{\IA^1}$\nobreakdash-module $\FT(M)$ with the same underlying vector space $M$, on which $\sD_{\IA^1}$ acts via the ring automorphism \eqref{Eqn:FTasRingAutomorhpismOfD}. Given a differential operator $L$, the identity map on $\sD_{\IA^1}$ induces a canonical isomorphism
\begin{equation}\label{Eqn:ModuleAndOperatorFT}
\FT(\sD_{\IA^1}/\sD_{\IA^1} L) \cong \sD_{\IA^1}/\sD_{\IA^1} \FT(L).    
\end{equation}
\end{para}

\begin{para}
Let $\cA$ be a differential $\overline\IQ(z)$-algebra with field of constants $K$. Assume $\cA$ is an integral domain. A \emph{solution} of a differential module $M$ in $\cA$ is a morphism of differential modules $s\colon M \to \cA$. If $M$ is of the form $M=\sD/\sD L $ for some operator $L$, solutions of~$M$ correspond to solutions of the differential equation $Ly=0$ by evaluating at the class of $1$. The set $\Hom_\sD(M,\cA)$ of solutions of $M$ in $\cA$ is a~$K$\nobreakdash-vector space of dimension at most the rank of~$M$, and we say that $M$ admits a \emph{basis of solutions} in $\cA$ if the equality
$$\dim_K\Hom_\sD(M,\cA) = \mathrm{rank}(M)$$
holds. If that is the case, we may choose a $\overline\IQ(z)$-basis $m_1,\dots,m_n$ of the generic fibre of $M$ and a $K$-basis $s_1, \ldots,s_n$ of the space of solutions, and form the square matrix
$$F = (s_i(m_j))_{1\leq i,j\leq n},$$
which is called a \emph{fundamental matrix of solutions} of $M$. The determinant of~$F$ is non-zero, and the subalgebra of the fraction field of $\cA$ spanned by the entries of $F$ and~$\det(F)^{-1}$ is a Picard-Vessiot extension for $M$; its automorphisms are the $K$-points of the differential Galois group of $M$; see \cite[1.4]{SingerVanDerPut}. Given $\sD$-modules $M_1$ and $M_2$, solutions of their sum and tensor product can be understood in terms of solutions of $M_1$ and $M_2$. For instance, the map
\begin{equation}\label{Eqn:SolutionsOfTensorProduct}
\Hom_\sD(M_1, \cA)\otimes_K \Hom_\sD(M_2, \cA) \to \Hom_\sD(M_1\otimes M_2, \cA)    
\end{equation}
that sends $s_1\otimes s_2$ to $m_1\otimes m_2 \mapsto s_1(m_1)s_2(m_2)$ is well defined and injective; if $M_1$ and~$M_2$ admit a basis of solutions in $\cA$, then this map is bijective by dimension count, and thus~$M_1\otimes M_2$ also admits a basis of solutions in $\cA$.
\end{para}

\begin{prop}[Katz]\label{Pro:PhaseStationnaire}
Let $M$ be a $\sD_{\IG_m}$-module such that $\FT(j_\ast M)$ has regular singularities, including at infinity. Then $M$ is non-singular on $\IG_m$, and $\FT(j_\ast M)$ has vanishing de Rham cohomology. Moreover,~$M$ and~$\FT(j_\ast M)$ have the same rank. 
\end{prop}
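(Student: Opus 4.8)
The plan is to treat the three assertions separately, writing $N = j_\ast M$ and $A = \FT(N)$ throughout, and to isolate the vanishing of de Rham cohomology as an elementary consequence of the localisation, reserving the genuine input (the stationary phase decomposition) for the statement about singularities.

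I would begin with the vanishing of de Rham cohomology, which is the softest part and does not even use the regular singularity hypothesis. The algebraic de Rham cohomology of a $\sD_{\IA^1}$-module is computed by the two-term complex $[A \xrightarrow{\partial} A]$ placed in degrees $0$ and $1$, so it suffices to show that $\partial$ acts bijectively on $A$. By the definition of the Fourier transform recalled above, $\partial$ acts on $A = \FT(N)$ exactly as $\FT(\partial) = z$ acts on $N$. Since $N = j_\ast M$ is, as a module, just $M$ equipped with its $\overline\IQ[z, \partial]$-action, and $M$ is a module over $\overline\IQ[z, z^{-1}, \partial]$, multiplication by $z$ is invertible on it. Hence $\partial$ is bijective on $A$, the complex $[A \xrightarrow{\partial} A]$ is acyclic, and $\rH^\ast_{\dR}(\IA^1, A) = 0$.

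Next I would establish that $M$ is non-singular on $\IG_m$ by appealing to the stationary phase decomposition for the Fourier transform (Laumon, Katz). Its content is that the formal structure of $A$ at $\infty$ is the direct sum, over the finite singularities $c$ of $N$, of the local Fourier transforms of the local pieces $N_{(c)}$, together with the contribution of the part of $N$ at $\infty$ of slope $>1$; moreover a singular point $c \neq 0$ of $N$ contributes a summand of slope exactly $1$, twisted by the exponential $e^{-cz}$. Since $A$ is assumed regular singular at $\infty$, it has no such slope-$1$ summand and no slope $>1$, which forces $N$ to have no singularity at any point $c \neq 0$ and no slope $>1$ at infinity. As $M$ is the restriction of $N$ to $\IG_m$, this says precisely that $M$ is non-singular on $\IG_m$; the same analysis of the piece coming from $N_{(0)}$ pins down $N$ to be regular singular at $0$ and to have slopes in $\{0, 1\}$ at $\infty$. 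This is the step I expect to be the main obstacle, in that it rests entirely on the local theory of the Fourier transform rather than on any manipulation available at the level of the present excerpt.

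Finally, for the equality of ranks I would compute the generic rank of $A$ as a twisted de Rham cohomology and invoke the Euler--Poincaré formula. The fibre of $A = \FT(N)$ at a point $c$ computes $\rH^\ast_{\dR}(\IA^1, N \otimes e^{cz})$, and for generic $c$ this is concentrated in degree $1$ with dimension the generic rank of $A$. Because $N = j_\ast M$, one has $N \otimes e^{cz} = j_\ast(M \otimes e^{cz})$, whose de Rham complex on $\IA^1$ coincides termwise with that of $M \otimes e^{cz}$ on $\IG_m$; so the computation takes place on $\IG_m$, where $M$ is lisse by the previous step. Deligne's formula gives
\[
\chi_{\dR}(\IG_m, M \otimes e^{cz}) = \mathrm{rank}(M)\cdot \chi(\IG_m) - \mathrm{irr}_0(M \otimes e^{cz}) - \mathrm{irr}_\infty(M \otimes e^{cz}).
\]
Here $\chi(\IG_m) = 0$ and $\mathrm{irr}_0 = 0$, as $M$ is regular singular at $0$ and $e^{cz}$ is regular there, while for generic $c$ the exponential twist, applied to a module whose slopes at $\infty$ are at most $1$, makes every slope at $\infty$ equal to $1$, so that $\mathrm{irr}_\infty = \mathrm{rank}(M)$. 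The Euler characteristic is therefore $-\mathrm{rank}(M)$, and since $\rH^0$ and $\rH^2$ vanish for generic $c$ the degree-$1$ term has dimension $\mathrm{rank}(M)$. This yields $\mathrm{rank}(A) = \mathrm{rank}(M)$ and completes the plan.
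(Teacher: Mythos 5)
Your proposal is correct, and its skeleton coincides with the paper's; the difference is that where you argue, the paper mostly cites. The de Rham vanishing step is literally the paper's argument: $\partial$ acts on $A=\FT(j_\ast M)$ as $z$ acts on $j_\ast M$, which is bijective because $M$ is a module over $\overline\IQ[z,z^{-1},\partial]$, and indeed this uses no regularity hypothesis. For the other two assertions the paper simply invokes Katz (Rem.\,12.3.2 for non-singularity on $\IG_m$, Prop.\,12.4.4 for the equality of ranks), whereas you unfold proofs of precisely those references: the formal stationary phase decomposition at infinity to rule out singularities of $j_\ast M$ on $\IG_m$ (and, as you correctly note, to pin down regularity at $0$ and slopes $\leq 1$ at $\infty$, which your third step then uses), and the identification of the generic fibre of $A$ with the twisted de Rham cohomology $\rH^1_{\dR}(\IG_m, M\otimes e^{cz})$ together with the Deligne--Euler--Poincar\'e formula for the rank; your slope bookkeeping (a generic twist $e^{cz}$ makes every slope at infinity equal to $1$, so $\mathrm{irr}_\infty=\mathrm{rank}(M)$ while $\mathrm{irr}_0=0$ and $\chi(\IG_m)=0$) is the standard proof of Katz's rank statement. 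What your route buys is self-containedness, at the cost of the local Fourier transform machinery; note that the paper also sketches a lighter alternative for the first assertion, which you might prefer: a direct computation shows that the Fourier transform of a regular singular operator has singularities contained in $\{0,\infty\}$, and since $j_\ast M$ is, up to the harmless pullback by $z\mapsto -z$, the Fourier transform of the regular singular module $A$, Fourier inversion gives non-singularity on $\IG_m$ at once. Two cosmetic points in your rank computation: on an affine curve the de Rham complex of a holonomic module has only two terms, so there is no $\rH^2$ --- what you need is that the kernel of $z-c$ on $A$ (the $\rH^0$) vanishes for generic $c$, which holds because the torsion of $A$ is supported at finitely many points; and $c$ should also be taken outside the finitely many singularities of $A$, so that the fibre dimension equals $\mathrm{rank}(A)$.
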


\begin{proof}
It follows from \cite[Rem.\,12.3.2]{Katz} that $M$ is non-singular on $\IG_m$. Alternatively, one can show by direct computation that if $L \in \overline\IQ[z,\partial]$ is regular singular, then the only singularities of $\FT(L)$ are $0$ and $\infty$; one can deduce the first statement from that. The de Rham cohomology of $\FT(j_\ast M)$ is computed by the complex $\partial\colon  \FT(j_\ast M)\to\FT(j_\ast M)$, but since~$z$ acts bijectively on $j_\ast M$, so does~$\partial$ on $\FT(j_\ast M)$, hence the second statement. Finally, the statement about ranks is \cite[Prop.\,12.4.4]{Katz}.
\end{proof}

\begin{para}\label{Par:ConnAndRS}
After Katz \cite[12.4]{Katz}, consider the category $\RS(\IA^1)$ of regular singular $\sD_{\IA^1}$\nobreakdash-modules and the full subcategory $\RS_0(\IA^1)$ of those with vanishing de Rham cohomology. We write $\Conn(\IG_m)$ for the category of non-singular $\sD_{\IG_m}$-modules, and $\Conn_0(\IG_m)$ for the full subcategory consisting of those modules~$M$ such that $\FT(j_\ast M)$ is regular singular. According to Proposition \ref{Pro:PhaseStationnaire}, the functors
$$\begin{array}{rclcrcl}
\Conn_0(\IG_m) &\longrightarrow & \RS_0(\IA^1) &\qquad\qquad    &  \RS_0(\IA^1) & \longrightarrow & \Conn_0(\IG_m)\\
 M & \mapsto & \FT(j_\ast M) & &A & \mapsto & j^\ast \FT^{-1}(A)  
\end{array}$$
are mutually inverse equivalences of categories. As non-singular $\sD_{\IG_m}$-modules can be identified with vector bundles with connection on $\IG_m$, the category $\Conn(\IG_m)$ with the usual tensor product structure is a $\overline \IQ$-linear neutral tannakian category; a fibre functor is provided by sending a vector bundle with connection to the fibre at $1$ of the underlying vector bundle. The subcategory~$\Conn_0(\IG_m)$ consists of objects that are regular singular at $0$ and whose slopes at $\infty$ lie in $\{0, 1\}$. It is stable under tensor products, duals, and extraction of subquotients, and hence a tannakian subcategory as shown in~\hbox{\cite[Th.\,12.3.6]{Katz}.} The above equivalences of categories exchange the tensor product of vector bundles with connection and the additive convolution, defined as
$$A \ast B = \Sum_\ast( \pr_1^\ast A \otimes \pr_2^\ast B)$$
for objects $A$ and $B$ of $\RS_0(\IA^1)$, where $\Sum, \pr_1, \pr_2\colon \IA^2\to \IA^1$ stand for summation and projections. The category $\RS_0(\IA^1)$ is thus tannakian with respect to additive convolution.
\end{para}

\begin{ex}
\begin{par}
Given an algebraic number $s\in \overline\IQ$, we set 
\[
E(s)=\sD_{\IG_m}/\sD_{\IG_m}z(\partial - s). 
\] Since solutions of $\partial - s$ are scalar multiples of the exponential function~$e^{sz}$, the rank\nobreakdash-one $\sD_{\IG_m}$\nobreakdash-module~$E(s)$ is non-singular at $0$ and has an irregular singularity at~$\infty$ unless~$s=0$, in which case $E(0)$ is equal to~$\cO_{\IG_m}$. The Fourier transform $\FT(j_\ast E(s))$ is the $\sD_{\IA^1}$\nobreakdash-module $\sD_{\IA^1}/\sD_{\IA^1}\partial(z-s)$, which has rank one and regular singularities at $\infty$ and $s$; solutions are scalar multiples of the meromorphic function $(z-s)^{-1}$. There is a canonical isomorphism
\begin{equation}\label{Eqn:ExponentialProduct}
E(s_1) \otimes E(s_2) = E(s_1+s_2)    
\end{equation}
for all $s_1,s_2\in \overline\IQ$. For any additive subgroup $\Lambda\subseteq \overline\IQ$, the family $\{E(s)\}_{s\in \Lambda}$ of objects of~$\Conn_0(\IG_m)$ hence additively generates a tannakian subcategory, which is equivalent to the category of $\Lambda$-graded vector spaces. Twisting modules by $E(s)$ corresponds via Fourier transform to translation by $s$, in the sense that there is a natural isomorphism
\begin{equation}\label{Eqn:TwistAndShift}
    \FT(j_\ast(M\otimes E(s))) = [z-s]^\ast \FT(j_\ast M)
\end{equation}
for every object $M$ of $\Conn_0(\IG_m)$. 
\end{par}

\begin{par}
Another family of one-dimensional objects of $\Conn_0(\IG_m)$ is given by the \emph{Kummer modules}
\[
K(a) = \sD_{\IG_m}/\sD_{\IG_m}(z\partial - a)
\] for $a\in \overline\IQ$. Solutions of $z\partial-a$ are scalar multiples of $z^a$, and $K(a)$ is isomorphic to $K(a+1)$ through multiplication by~$z$. The module~$K(a)$ has regular singularities at $0$ and $\infty$, and its Fourier transform is given by
\[
\FT(j_\ast K(a)) = \sD_{\IA^1}/\sD_{\IA^1}(z\partial+a).
\] In fact, every one-dimensional object of $\Conn_0(\IG_m)$ is isomorphic to $E(s)\otimes K(a)$ for some unique~$s\in \overline\IQ$ and $a\in \overline\IQ/\IZ$. This follows from the fact that giving a connection on a rank-one vector bundle on $\IG_m$ amounts to giving a differential form $\omega \in \overline{\IQ}[z, z^{-1}]dz$ and that having a regular singularity at $0$ and slopes in $\{0, 1\}$ at~$\infty$ implies that $\omega$ is of the form $a\sfrac{dz}{z}+sdz$.  
\end{par}
\end{ex}

\begin{para}\label{Par:IntroDivisorOfM}
\begin{par}
As this will become important later, we now show a remarkable property of the tannakian category $\Conn_0(\IG_m)$, namely, the existence of a monoidal functor
\begin{equation}\label{Eqn:StokesDecomposition}
\Psi \colon \Conn_0(\IG_m) \longrightarrow \{\mbox{$\overline \IQ$-graded vector spaces}\}
\end{equation}
that yields a fibre functor after forgetting the grading. To construct it, we start by localising at infinity. Let $M$ be an object of $\Conn_0(\IG_m)$, and define~$M_\infty$ as the $\overline \IQ(\!(w)\!)$-differential module
$$M_\infty = \overline \IQ(\!(w)\!) \otimes_{\overline \IQ[z]} M,$$
where the algebra structure $\overline \IQ[z] \to \overline \IQ(\!(w)\!)$ is given by sending $z$ to $w^{-1}$. By \hbox{\cite[Prop. 2.3.4]{KatzGalois},} there is a natural decomposition of $M_\infty$ into a regular singular part and a complementary part of slope $1$. We obtain from this decomposition a natural morphism
\begin{equation}\label{Eqn:InfinityStokesDecomposition}
\bigoplus_{s\in \overline \IQ} M_\infty^{(s)} \otimes E(s)_\infty \longrightarrow M_\infty,
\end{equation}
where $M_\infty^{(s)}$ denotes the regular singular part of $(M\otimes E(-s))_\infty$. Before we continue with the construction of the functor $\Psi$, we verify the following:
\end{par}
\end{para}

\begin{lem}\label{Lem:StokesDecompositionCheck}
The natural morphism of differential $\overline\IQ(\!(w)\!)$-modules \eqref{Eqn:InfinityStokesDecomposition} is an isomorphism, which is compatible with tensor products and duals.
\end{lem}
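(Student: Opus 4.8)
The plan is to deduce everything from the Turrittin--Levelt formal decomposition, in the sharp form recorded by Katz for modules whose slopes lie in $\{0,1\}$. Since $M$ is an object of $\Conn_0(\IG_m)$, its localisation $M_\infty$ is a differential $\overline\IQ(\!(w)\!)$-module all of whose slopes lie in $\{0,1\}$, so by \cite[Prop.\,2.3.4]{KatzGalois} there is a canonical, and in particular functorial, direct sum decomposition
\[
M_\infty \cong \bigoplus_{s\in \overline\IQ} E(s)_\infty \otimes R_s,
\]
with the $R_s$ regular singular and all but finitely many zero; no ramified extension intervenes precisely because the slopes are integral, so that the slope-$1$ exponential factors are of the form $s/w$ with $s\in\overline\IQ$. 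Here I use that $E(s)_\infty$ is the rank-one module with solution $e^{s/w}$, which is regular singular for $s=0$ and purely of slope $1$ for $s\neq 0$.

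The first task is to identify $R_s$ with $M_\infty^{(s)}$. Twisting the displayed decomposition by $E(-s)_\infty$ and applying the product rule \eqref{Eqn:ExponentialProduct} localised at infinity gives
\[
M_\infty \otimes E(-s)_\infty \cong \bigoplus_{s'\in \overline\IQ} E(s'-s)_\infty \otimes R_{s'}.
\]
Each summand with $s'\neq s$ is the tensor product of a regular singular module with the slope-$1$ module $E(s'-s)_\infty$, hence is purely of slope $1$, whereas the summand $s'=s$ equals $R_s$ and is regular singular. Therefore the regular singular part of $(M\otimes E(-s))_\infty$, which is by definition $M_\infty^{(s)}$, is canonically $R_s$. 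The morphism \eqref{Eqn:InfinityStokesDecomposition} is, on the summand indexed by $s$, the composite of the canonical inclusion of the regular singular part $M_\infty^{(s)} \hookrightarrow M_\infty\otimes E(-s)_\infty$ with the isomorphism $M_\infty \otimes E(-s)_\infty \otimes E(s)_\infty \cong M_\infty$ arising from $E(-s)\otimes E(s) = E(0) = \cO_{\IG_m}$; under the identification $M_\infty^{(s)} = R_s$ this recovers the decomposition above, so \eqref{Eqn:InfinityStokesDecomposition} is an isomorphism.

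For compatibility with tensor products I would tensor the decompositions of $M_\infty$ and $N_\infty$ and regroup using \eqref{Eqn:ExponentialProduct}:
\[
(M\otimes N)_\infty \cong \bigoplus_{s\in\overline\IQ} E(s)_\infty \otimes \Bigl(\bigoplus_{s_1+s_2=s} M_\infty^{(s_1)}\otimes N_\infty^{(s_2)}\Bigr),
\]
the inner sum being regular singular because the category of regular singular modules is closed under tensor product. By uniqueness of the decomposition this yields $(M\otimes N)_\infty^{(s)} \cong \bigoplus_{s_1+s_2=s} M_\infty^{(s_1)}\otimes N_\infty^{(s_2)}$, which is the asserted compatibility. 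Duals are handled in the same way, using that $E(s)^\vee \cong E(-s)$ and that regular singularity is preserved under duality, giving $(M^\vee)_\infty^{(s)} \cong (M_\infty^{(-s)})^\vee$.

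The main obstacle is not any single calculation but securing the canonicity and functoriality of the slope decomposition, so that \emph{taking the regular singular part} is genuinely a functor and the identifications above are natural rather than mere isomorphisms of abstract modules; this is exactly what \cite[Prop.\,2.3.4]{KatzGalois} provides. The only extra input needed is the slope bookkeeping---that tensoring by $E(s)_\infty$ with $s\neq 0$ raises all slopes to $1$---which follows from the additivity of irregularity under the rank-one exponential twists.
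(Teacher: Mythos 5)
Your proof does not go through: the decomposition you take as your starting point is not what \cite[Prop.\,2.3.4]{KatzGalois} provides, and the reason you offer for it is false. That proposition, as used in \ref{Par:IntroDivisorOfM}, gives only the coarse slope decomposition of $M_\infty$ into a regular singular part and a complementary part of pure slope $1$; it says nothing about splitting the slope\nobreakdash-$1$ part into exponential twists $E(s)_\infty \otimes R_s$ with $s\in\overline\IQ$. That finer, unramified decomposition with \emph{linear} exponential factors is precisely the content of Lemma \ref{Lem:StokesDecompositionCheck}, so invoking it as an input is circular. Moreover, your bridge --- ``no ramified extension intervenes precisely because the slopes are integral'' --- is not true for differential modules over $\overline\IQ(\!(w)\!)$. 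Let $v$ be a square root of $w$, and let $N$ be the pushforward to $\overline\IQ(\!(w)\!)$ of the rank-one module over $\overline\IQ(\!(v)\!)$ with solution $e^{1/v^2+1/v}=e^{z+\sqrt z}$. Then $N$ is irreducible of rank $2$, and both of its slopes equal $1$ (slopes are divided by the degree of the cover under pushforward), yet $N$ admits no decomposition $\bigoplus_s E(s)_\infty\otimes R_s$: by the projection formula, $N\otimes E(-s)_\infty$ is the pushforward of the module with solution $e^{(1-s)/v^2+1/v}$, which has slope $1$ (if $s\neq 1$) or $1/2$ (if $s=1$) and is never regular singular. So integrality of slopes does not exclude ramified exponential parts, and the ``slope bookkeeping'' you appeal to cannot produce the decomposition.

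This missing existence statement is exactly where the paper's proof does its work: it reduces, using naturality of both sides of \eqref{Eqn:InfinityStokesDecomposition}, to irreducible local constituents of slope $0$ or $1$, and argues via the Levelt--Turrittin decomposition that the relevant irreducibles have rank one, given by an operator $w\partial_w - \bigl(\tfrac tw + a + wh(w)\bigr)$, for which the unique twist $s=-t$ kills the slope. Note that excluding ramified irreducibles such as the module $N$ above is a genuinely non-formal point; it ultimately rests on the global hypothesis that $M$ lies in $\Conn_0(\IG_m)$, that is, that $\FT(j_\ast M)$ is regular singular, and not merely on the numerical condition that the slopes of $M_\infty$ lie in $\{0,1\}$ --- so no purely local slope argument, yours included, can suffice. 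Your remaining steps (identifying $R_s$ with $M_\infty^{(s)}$ by twisting, and deducing compatibility with tensor products and duals from $E(s_1)\otimes E(s_2)=E(s_1+s_2)$, uniqueness of the decomposition, and stability of regular singular modules under tensor product and duality) are correct and agree with the paper's brief treatment of those points, but they all presuppose the decomposition whose existence is the actual assertion; contrary to what your closing paragraph suggests, the crux is existence, not functoriality.
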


\begin{proof}
In order to check that \eqref{Eqn:InfinityStokesDecomposition} is an isomorphism, it suffices to show that for every irreducible differential $\overline \IQ(\!(w)\!)$-module $M$ of slope $0$ or $1$, there exists a unique $s\in \overline \IQ$ such that~$M \otimes E(-s)_\infty$ is of slope $0$. The existence of such an $s$ implies surjectivity, and injectivity follows from its uniqueness. Compatibility with tensor products and duals is obvious from~\eqref{Eqn:ExponentialProduct} and the fact that $E(s)$ has slope $0$ if and only if $s=0$. Thus, let $M$ be an irreducible~$\overline \IQ(\!(w)\!)$-module of slope $0$ or $1$. The Levelt--Turrittin decomposition \cite[2.2]{KatzGalois} shows that $M$ has rank one and is given by a differential operator $L = w\partial_w - f$ with 
$$f(w) = \frac t w + a + wh(w)$$
for some $t,a \in \overline \IQ$ and $h \in \overline\IQ[\![w]\!]$. By definition, the slope of $M$ is equal to $\max(-\mathrm{ord}_w(f),0)$, so $M$ is of slope $0$ if and only if $t = 0$. A non-zero solution of $L$ is given by $e^{-t/w} \cdot w^a \cdot H(w),$ where~$H$ is the exponential of a primitive of $h$, whereas a non-zero solution of $E(-s)_\infty$ is given by the function $e^{-sz}= e^{-s/w}$. A non-zero solution of $M \otimes E(-s)_\infty$ is hence $e^{-(s+t)/w} \cdot w^a \cdot H(w)$, and our claim follows on noting that this function belongs to the algebra of convergent Laurent series with monodromy in the variable $w$ if and only if $s=-t$.
\end{proof}

\begin{para}\label{Para:FibreFunctor}
\begin{par}
We now have obtained a natural decomposition of $M_\infty$, compatible with tensor products and duals. Besides, the category of regular singular $\overline \IQ(\!(w)\!)$-differential modules is equivalent, as a tannakian category, to the category of $\overline\IQ$-vector spaces equipped with an automorphism. Choose a fiber functor $\omega$ on it, and then set
\begin{equation}\label{Eqn:DefinitionStokesDecomposition}
\Psi_s(M) =  \omega\big(M_\infty^{(s)}\big) \qqet \Psi(M) = \bigoplus_{s\in \overline \IQ} \Psi_s(M) .
\end{equation}
By forgetting the grading, $\Psi$ yields a fibre functor on $\Conn_0(\IG_m)$. A remarkable consequence of this is that the Galois group of every object $M$ with respect to $\Psi$ canonically contains the torus with character group the subgroup of $\overline \IQ$ generated by those $s\in \overline \IQ$ with $\Psi_s(M)\neq 0$. 
\end{par}

\begin{par}
We write $\IZ[\overline \IQ]$ for the group ring of the additive group $\overline \IQ$, that is, the free $\IZ$-module generated by symbols $[s]$, one for each $s\in \overline \IQ$, together with the multiplication uniquely determined by distributivity and $[s_1]\ast [s_2] = [s_1+s_2]$ for all $s_1,s_2\in \overline \IQ$. We define group homomorphisms called \emph{degree}, \emph{inversion}, and \emph{evaluation} on generators as follows: 
$$\begin{array}{rclrclrcl}
\deg\colon \IZ[\overline \IQ] & \longrightarrow & \IZ, &\qquad \inv\colon \IZ[\overline \IQ] & \longrightarrow & \IZ[\overline \IQ], & \qquad \ev\colon \IZ[\overline \IQ] & \longrightarrow & \overline \IQ. \\
\relax [s] & \mapsto & 1 & [s] & \mapsto & [-s] & [s] & \mapsto & s
\end{array}$$
Note that the degree and the inversion maps are ring homomorphisms as well. We call 
$$\div(M) = \sum_{s\in \overline \IQ} \dim(\Psi_s(M))[s]$$
the \emph{divisor} of the object $M$ of $\Conn_0(\IG_m)$. Since $\Psi$ is a fibre functor, and hence preserves dimensions, the equality $\dim M = \deg(\div M)$ holds. Exactness and compatibility with tensor products of $\Psi$ implies the relations
\begin{equation*}
\div(M) = \div(M_1) + \div(M_2) \qqet \div(M_1\otimes M_2) = \div(M_1)\ast \div(M_2)
\end{equation*}
for every short exact sequence $0 \to M_1 \to M \to M_2 \to 0$ and all objects $M_1$ and $M_2$ of~$\Conn_0(\IG_m)$ respectively. The compatibility of $\Psi$ with duals amounts to the relation \hbox{$\div(M^\vee) = \inv(\div(M))$}, and $\div(\det (M)) = [\ev(\div(M))]$ holds.
\end{par}
\end{para}

\begin{para}
We can express the divisor of an object of $\Conn_0(\IG_m)$ in terms of its Fourier transform as follows. Given an object $A$ of $\RS(\IA^1)$, let $\Phi_{z-s}(A)$ denote the vector space of vanishing cycles of $A$ with respect to the function $z-s$. If $A$ is torsion free, this is the space of solutions of $[z-s]^\ast A$ in $\cM$ modulo the space of holomorphic solutions. We set
$$\sing(A) = \sum_{s\in \overline\IQ} \dim(\Phi_{z-s}(A))[s].$$
If $A$ is of the form $\FT(j_\ast M)$ for some object $M$ of 
$\Conn_0(\IG_m)$, the dimension of the regular singular part of $M_\infty$ equals the dimension of $\Phi_{z-0}(A)$ by \hbox{\cite[Cor.\,2.11.7]{Katz}, and hence the equality } 
\begin{equation}\label{Eqn:SingAndDiv}
\sing(A) = \div(M)    
\end{equation}
follows from \eqref{Eqn:TwistAndShift}. We can combine the functors $\Phi_{z-s}$ to form a \emph{total vanishing cycles functor} $\Phi$ from $\RS_0(\IA^1)$ to $\overline\IQ$-graded vector spaces.
As one might guess, there exist natural isomorphisms $\Phi(A\ast B)\cong \Phi(A)\otimes \Phi(B)$ and $\Phi(A^\vee)=\Phi(A)^\vee$, which may be interpreted as some type of global Thom-Sebastiani isomorphism. We found it harder than expected to write down such isomorphisms and verify their compatibility with associativity and commutativity constraints. Fortunately, the numerical identity \eqref{Eqn:SingAndDiv} is all we need.
\end{para}

\begin{defn}\label{defn:modulestypeGandE}
A $\sD_{\IA^1}$-module $M$ is said to be of \emph{type $G$} if its generic fibre is of the form $\overline\IQ(z)[\partial]\slash \overline\IQ(z)[\partial]L$ for some $G$-operator~$L$. A $\sD_{\IG_m}$-module $M$ is said to be of \emph{type $E$} if its Fourier transform~$\FT(j_\ast M)$ is of type $G$. We write 
$$\bG \subseteq \RS(\IA^1) \qqet \bE \subseteq \Conn_0(\IG_m)$$
for the full subcategories of $\sD_{\IA^1}$-modules of type $G$ and $\sD_{\IG_m}$-modules of type $E$ respectively, and $\bG_0 \subseteq \bG$ for the full subcategory consisting of those modules with vanishing cohomology.
\end{defn}

\begin{para} The definition of $\sD_{\IA^1}$-modules of type $G$ is due to André \cite[3.6]{GevreyI}. In the same paper, modules of type $E$ are defined on $\IA^1$ as those whose Fourier transform is of type $G$; see~\hbox{\cite[4.9]{GevreyI}}. Our definition and his are compatible in that the functors $j_\ast$ and $j^\ast$ preserve $\sD$-modules of type $E$. Indeed, this is obviously true for $j_\ast$, and follows from Theorem \ref{Thm:PropertiesOfGModules} below for $j^\ast$. There are, however, many $\sD_{\IA^1}$-modules of type $E$ that are not of the form $j_\ast M$. We prefer to work with modules on $\IG_m$ in order to ensure that the equivalent characterisations of being of type $E$ from Proposition \ref{Pro:WishListC} below hold.
\end{para}

\begin{ex}\label{Exmp:ExponentialPeriodModules}
The exponential modules $E(s)$ for $s\in \overline \IQ$ and the Kummer modules~$K(a)$ for $a\in \IQ$ belong to $\bE$ and have divisors 
$$\div(E(s))=[s] \qqet \div(K(a))=[0].$$
To give a more interesting example, let $f\in \overline \IQ[t]$ be a polynomial of degree $n \geq 2$, which we regard as a morphism $\IA^1 \to \IA^1$. The $\sD$-module $f_\ast\cO_{\IA^1}$ is the $\IQ[z]$\nobreakdash-module $\overline\IQ[z,u]/(f(u)-z)$, equipped with the unique derivation extending $\partial$ on~$\IQ[z]$. As a $\overline\IQ[z]$-module, it is free of rank~$n$, a basis being given by the classes of~$1,u,u^2,\ldots,u^{n-1}$. A basis of solutions of $f_\ast \cO_{\IA^1}$ is given by the set of $\overline\IQ[z]$-algebra morphisms $\overline\IQ[z,u]/(f(u)-z) \to \cG$ sending $u$ to an algebraic function~$u(z)$ satisfying $f(u(z))=z$, which indeed can be found in $\cG$ since $f$ has algebraic coefficients. The module $f_\ast\cO_{\IA^1}$ is thus of type $G$. The adjunction map $\cO_{\IA^1} \to f_\ast\cO_{\IA^1}$ sends~$1$ to the class of $1$ and induces an isomorphism in cohomology, so $A = (f_\ast\cO_{\IA^1})/\cO_{\IA^1}$ belongs to~$\bG_0$, and~$M=j^\ast \FT^{-1}(A)$ belongs to $\bE$. Moreover, we can retrieve $A$ from $M$ by means of the isomorphism~$A\cong\FT(j_\ast M)$. The divisor of $M$ is given by the critical values of~$f$, namely
$$\div(M)=\sing (A)=\sum_{f'(\alpha)=0}[f(\alpha)],$$ 
where the sum runs over all zeroes $\alpha$ of $f'$, counted with multiplicity.
\end{ex}

\begin{thm}[André]\label{Thm:PropertiesOfGModules} Modules of type $G$ satisfy the following: 
\begin{enumerate}
    \item A $\sD_{\IA^1}$-module is of type $G$ if and only if it admits a basis of solutions in $\cG$.
    \item The class of modules of type $G$ is stable under extensions, tensor product, and duals, and every submodule and quotient of a module of type $G$ is again of type $G$.
\end{enumerate}
\end{thm}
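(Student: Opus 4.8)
The plan is to play off two ways of certifying that a $\sD_{\IA^1}$-module $M$ is of type $G$ against one another: via a cyclic presentation of its generic fibre $M^{\mathrm{gen}} = \overline\IQ(z)\otimes_{\overline\IQ[z]}M$ by a $G$-operator, and via the existence of a full space of solutions in $\cG$. The bridge between them is elementary: since $\cG$ contains $\overline\IQ(z)$, every nonzero polynomial acts invertibly on $\cG$, so each morphism $M \to \cG$ factors uniquely through $M^{\mathrm{gen}}$, and $\Hom_\sD(M,\cG)$ is canonically the solution space of $M^{\mathrm{gen}}$.

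First I would prove statement~(1), which simultaneously produces the main tool for statement~(2). By the cyclic vector theorem, $M^{\mathrm{gen}} \cong \overline\IQ(z)[\partial]/\overline\IQ(z)[\partial]L$ for some operator $L$ of order $\mathrm{rank}(M)$, and under this isomorphism solutions of $M$ in $\cG$ correspond to solutions of $Ly=0$. Hence $M$ admits a basis of solutions in $\cG$ if and only if $L$ does, \ie if and only if $L$ is a $G$-operator. Since Definition~\ref{defn:modulestypeGandE} demands precisely that some cyclic presentation of $M^{\mathrm{gen}}$ be given by a $G$-operator, both implications of~(1) drop out. The payoff I would stress is the ensuing \emph{intrinsic} characterisation: being of type $G$ depends only on the isomorphism class of $M^{\mathrm{gen}}$, and in fact \emph{any} operator presenting a type-$G$ module is automatically a $G$-operator. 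This is what licenses me to replace the operator attached to a subquotient or a factor by one handed over by the hypotheses.

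For statement~(2) I would treat each operation in whichever picture is more convenient. Duals and subquotients go through operators. Writing $M^{\mathrm{gen}} = \overline\IQ(z)[\partial]/\overline\IQ(z)[\partial]L$ with $L$ a $G$-operator (which we may take with polynomial coefficients after left-multiplying by a polynomial, leaving $M^{\mathrm{gen}}$ unchanged), the generic fibre of $M^\vee$ is presented by the adjoint $L^\ast$, a $G$-operator by Theorem~\ref{Thm:PropertiesOfGoperators}(2). For a submodule $N\subseteq M$, exactness of the generic fibre functor exhibits $N^{\mathrm{gen}}$ as a submodule of $\overline\IQ(z)[\partial]/\overline\IQ(z)[\partial]L$; as $\overline\IQ(z)[\partial]$ is a principal ideal domain, these correspond to factorisations $L = L_2L_1$, with $N^{\mathrm{gen}} \cong \overline\IQ(z)[\partial]/\overline\IQ(z)[\partial]L_2$ and $(M/N)^{\mathrm{gen}} \cong \overline\IQ(z)[\partial]/\overline\IQ(z)[\partial]L_1$. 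Since left and right factors of a $G$-operator are again $G$-operators (Theorem~\ref{Thm:PropertiesOfGoperators}(2)), both $N$ and $M/N$ are of type $G$. Tensor products need nothing new: the discussion following \eqref{Eqn:SolutionsOfTensorProduct} already shows that if $M_1$ and $M_2$ admit bases of solutions in $\cG$ then so does $M_1\otimes M_2$, which by statement~(1) is exactly stability under tensor product. Finally, for an extension $0\to M_1\to M\to M_2\to 0$ with $M_1,M_2$ of type $G$, a cyclic presentation $M^{\mathrm{gen}} = \overline\IQ(z)[\partial]/\overline\IQ(z)[\partial]L$ realises the surjection onto $M_2$ by a factorisation $L=L_2L_1$, where $L_1$ presents $(M_2)^{\mathrm{gen}}$ and $L_2$ presents $(M_1)^{\mathrm{gen}}$; by the intrinsic characterisation both are $G$-operators, so $L=L_2L_1$ is a $G$-operator by Theorem~\ref{Thm:PropertiesOfGoperators}(2) and $M$ is of type $G$.

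The proof is essentially bookkeeping, and its genuine substance is imported: the engine is Theorem~\ref{Thm:PropertiesOfGoperators}(2) (stability of $G$-operators under products, adjoints, and factors), which rests on Galochkin's size condition and lies far deeper than anything in this argument. The one structural fact I must secure is the intrinsic characterisation of statement~(1); with it in hand, the dictionary between subquotients and left/right factors, and between extensions and products of operators, is routine, and it is the sole entry point for the cyclic vector theorem. I foresee no serious obstacle beyond correctly pairing each short exact sequence with the appropriate factorisation of $L$.
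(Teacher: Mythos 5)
Your proof is correct, and for statement (1) and the tensor-product part of statement (2) it is exactly the paper's argument: the cyclic vector theorem plus the fact that solutions of $M$ in $\cG$ factor through the generic fibre (because $\cG$ contains $\overline\IQ(z)$), and \eqref{Eqn:SolutionsOfTensorProduct} respectively. Where you genuinely diverge is on extensions, subquotients, and duals. There the paper does not argue at all: it cites Andr\'e's module-level result \cite[IV\,3, Lem.\,2]{Gfunctions} directly, and then remarks that one \emph{could} deduce these stabilities from statement (2) of Theorem \ref{Thm:PropertiesOfGoperators} together with the dictionary between short exact sequences of $\overline\IQ(z)[\partial]$-modules and products of operators, ``were it not a circular argument''. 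That declined deduction is precisely your proof, so the comparison turns on whether the circularity charge sticks. Formally it does not: Theorem \ref{Thm:PropertiesOfGoperators}\,(2) is established before Theorem \ref{Thm:PropertiesOfGModules} and independently of it (by citing Andr\'e's lemma on sizes of differential modules and translating via the equivalence of definitions in \ref{Par:DefinitionsOfGoperator}), so invoking it afterwards is legitimate, and your dictionary is the right one---submodules and quotients of $\overline\IQ(z)[\partial]/\overline\IQ(z)[\partial]L$ correspond to factorisations $L=L_2L_1$ with submodule $\cong \overline\IQ(z)[\partial]/\overline\IQ(z)[\partial]L_2$ and quotient $\cong\overline\IQ(z)[\partial]/\overline\IQ(z)[\partial]L_1$, duals to adjoints, extensions to products. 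The paper's complaint is about content rather than logic: its proof of Theorem \ref{Thm:PropertiesOfGoperators}\,(2) is itself the module-to-operator translation of the very same Andr\'e lemma, so your operator-to-module translation merely unwinds that proof, and the substance of both routes is the same imported result, as your closing paragraph honestly concedes. What your route buys is a self-contained deduction (everything reduced to Theorem \ref{Thm:PropertiesOfGoperators} and the left principal ideal structure of $\overline\IQ(z)[\partial]$) and the explicitly stated ``intrinsic characterisation''---any operator presenting a type-$G$ module is automatically a $G$-operator---which the paper uses only implicitly, for instance in the proof of Proposition \ref{Pro:WishListC}; what the paper's route buys is brevity and transparency about where the mathematical weight actually lies.
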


\begin{proof}
\begin{par}
By Theorem \ref{Thm:PropertiesOfGoperators}, $G$-operators are regular singular, thus the inclusion $\bG \subseteq \RS(\IA^1)$. Since the generic fibre of a $\sD_{\IA^1}$-module $A$ is isomorphic to $\overline \IQ(z)[\partial]\slash \overline \IQ(z)[\partial]L$ for some operator~$L$, the module $A$ admits a basis of solutions in $\cG$ if and only if the operator $L$ does, so statement (1) follows directly from the definitions.
\end{par}

\begin{par}
That modules of type $G$ are stable under extensions, extraction of subquotients, and duality is shown in \cite[IV\,3, Lem.\,2]{Gfunctions}. Were it not a circular argument, we could of course deduce this from statement (2) of Theorem \ref{Thm:PropertiesOfGoperators} and the observation that any short exact sequence of~$\overline\IQ(z)[\partial]$-modules is isomorphic to an exact sequence associated with a product of differential operators. It remains to show that if $A_1$ and $A_2$ are of type $G$, then so is $A_1\otimes A_2$. This follows from $(1)$ and the general fact that solutions of $A_1\otimes A_2$ can be expressed in terms of products of solutions of $A_1$ and $A_2$, as shown in \eqref{Eqn:SolutionsOfTensorProduct}.
\end{par}
\end{proof}

\begin{prop}\label{Pro:WishListC}
Let $M$ be a $\sD_{\IG_m}$-module. The following statements are equivalent: 
\begin{enumerate}
    \item The module $M$ is of type $E$, that is, the module $A = \FT(j_\ast M)$ is of type $G$.
    \item The module $M$ belongs to $\Conn_0(\IG_m)$ and admits a basis of solutions in $\cE$.
    \item There exists an $E$-operator $L\in \overline \IQ[z, \partial]$ such that $M$ is isomorphic to $\sD_{\IG_m}/ \sD_{\IG_m}L$.
\end{enumerate}
\end{prop}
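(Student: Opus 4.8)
The plan is to prove the cyclic chain $(3) \Rightarrow (2) \Rightarrow (1) \Rightarrow (3)$, using the operator–module dictionary \eqref{Eqn:ModuleAndOperatorFT} to move between the two sides and the Laplace transform of Proposition \ref{Pro:LaplaceSummary} to convert solutions in $\cE$ into solutions in $\cG$. At several points I replace $M$ by a cyclic presentation: a module satisfying $(1)$ or $(2)$ lies in $\Conn_0(\IG_m)$ by Proposition \ref{Pro:PhaseStationnaire}, hence is a vector bundle with connection on $\IG_m$ and thus cyclic by the cyclic vector theorem; multiplying a cyclic operator on the left by a suitable power of $z$, a unit of $\sD_{\IG_m}$, I may assume $M \cong \sD_{\IG_m}/\sD_{\IG_m}L$ with $L \in \overline\IQ[z,\partial]$ of order $\mathrm{rank}(M)$. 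Writing $N = \sD_{\IA^1}/\sD_{\IA^1}L$, so that $M = j^\ast N$ and $j_\ast M = j_\ast j^\ast N$, the adjunction unit $N \to j_\ast j^\ast N$ has kernel and cokernel supported at the origin. Applying the exact functor $\FT$ and using that it sends the delta-module at the origin to $\cO_{\IA^1}$, I obtain that $A = \FT(j_\ast M)$ and $\FT(N) = \sD_{\IA^1}/\sD_{\IA^1}\FT(L)$ differ only by successive extensions with copies of $\cO_{\IA^1}$. This comparison is the technical backbone of the argument.

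The two easy implications now follow. For $(3) \Rightarrow (2)$: if $L$ is an $E$-operator, then $\FT(L)$ is a $G$-operator, so $\FT(N)$, and hence $A$ (which differs from it by regular singular $\cO_{\IA^1}$-extensions), is regular singular including at infinity; thus $M \in \Conn_0(\IG_m)$. Moreover $L$ admits a basis of solutions in $\cE$ by Theorem \ref{Thm:PropertiesOfEoperators}(1), and these are precisely the solutions of $M$ in $\cE$. For $(1) \Rightarrow (3)$: if $A$ is of type $G$, then it is regular singular including at infinity, so $M \in \Conn_0(\IG_m)$ by Proposition \ref{Pro:PhaseStationnaire} and the cyclic reduction applies. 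Since $\cO_{\IA^1}$ is of type $G$ and type $G$ is stable under extensions and subquotients (Theorem \ref{Thm:PropertiesOfGModules}(2)), the $\cO_{\IA^1}$-comparison forces $\FT(N) = \sD_{\IA^1}/\sD_{\IA^1}\FT(L)$ to be of type $G$ as well; by Theorem \ref{Thm:PropertiesOfGModules}(1) this means $\FT(L)$ has a basis of solutions in $\cG$, that is, $\FT(L)$ is a $G$-operator and $L$ is an $E$-operator.

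The implication $(2) \Rightarrow (1)$ is where the real difficulty lies, and it is here that the passage through $\cG_0 = \cG/\IC[z]$ must be handled with care. Starting from a basis of $n = \mathrm{rank}(M)$ solutions $f \in \cE$ of $L$, the relation $\sL(Lf) = \FT(L)\sL(f)$ of Proposition \ref{Pro:LaplaceSummary} shows that their Laplace transforms are $n$ linearly independent solutions of $\FT(L)$, hence of $\FT(N)$, in $\cG_0$. To turn these into genuine solutions of $A$ in $\cG$, I would establish two vanishing statements. First, $\Hom_\sD(\cO_{\IA^1}, \cG_0) = 0$ and $\Ext^1_\sD(\cO_{\IA^1}, \cG_0) = 0$, because $\partial$ is both injective and surjective on $\cG_0$ (injectivity since a primitive of $0$ in $\cG_0$ is a polynomial, surjectivity since $\cG$ is stable under taking primitives). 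These make $\Hom_\sD(-, \cG_0)$ insensitive to the $\cO_{\IA^1}$-comparison, yielding an isomorphism $\Hom_\sD(A, \cG_0) \cong \Hom_\sD(\FT(N), \cG_0)$, whose dimension is therefore at least $n$. Second, for $A \in \RS_0(\IA^1)$ the vanishing of the de Rham cohomology of $A$ and of its dual gives $\Hom_\sD(A, \IC[z]) = \Ext^1_\sD(A, \IC[z]) = 0$, so the sequence $0 \to \IC[z] \to \cG \to \cG_0 \to 0$ produces $\Hom_\sD(A, \cG) \cong \Hom_\sD(A, \cG_0)$. Combining the two, $A$ has at least $n$ linearly independent solutions in $\cG$; as this number is bounded above by $\mathrm{rank}(A) = n$, the module $A$ admits a basis of solutions in $\cG$ and is of type $G$ by Theorem \ref{Thm:PropertiesOfGModules}(1). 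The main obstacle, exactly the one flagged in \ref{Par:LaplaceAndFourier}, is that the Laplace transform intertwines the operator actions only up to polynomials; the two vanishing statements above, which rest on $M$ lying in $\Conn_0(\IG_m)$ and hence on $A$ having vanishing de Rham cohomology, are precisely what neutralises these polynomial correction terms.
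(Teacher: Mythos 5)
Your implications $(3)\Rightarrow(2)$ and $(1)\Rightarrow(3)$ are correct and essentially identical to the paper's own arguments: the same cyclic presentation, the same four-term comparison $0\to\cO_{\IA^1}^a\to\FT(N)\to A\to\cO_{\IA^1}^b\to 0$, and stability of type $G$ under extensions and subquotients. The genuine gap is in $(2)\Rightarrow(1)$, at your second vanishing statement: it is \emph{false} that $\Hom_\sD(A,\IC[z])=\Ext^1_\sD(A,\IC[z])=0$ for every $A\in\RS_0(\IA^1)$. Take $M=\cO_{\IG_m}$. Then $j_\ast M=\overline\IQ[z,z^{-1}]$ is cyclic with generator $z^{-1}$ and annihilator $\sD_{\IA^1}\partial z$ (both $\sD_{\IA^1}/\sD_{\IA^1}\partial z$ and $j_\ast M$ are length-two extensions of $\delta_0=\sD_{\IA^1}/\sD_{\IA^1}z$ by $\cO_{\IA^1}$), so \eqref{Eqn:ModuleAndOperatorFT} gives $A=\FT(j_\ast M)\cong\sD_{\IA^1}/\sD_{\IA^1}z\partial$. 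This $A$ lies in $\RS_0(\IA^1)$, yet its polynomial solutions are the constants, so $\Hom_\sD(A,\IC[z])=\IC\neq 0$, and the constant $1$ is not of the form $(z\partial)(p)$ with $p\in\IC[z]$, so $\Ext^1_\sD(A,\IC[z])=\coker\bigl(z\partial\colon\IC[z]\to\IC[z]\bigr)\neq 0$. Worse, the restriction map $\Hom_\sD(A,\cG)\to\Hom_\sD(A,\cG_0)$, whose surjectivity is exactly what you need in order to lift your Laplace transforms from $\cG_0$ to $\cG$, is in this example the \emph{zero} map between two one-dimensional spaces: the solutions of $z\partial$ in $\cG$ are the constants, which die in $\cG_0$, whereas $\Hom_\sD(A,\cG_0)$ is spanned by the class of $\log(z)$. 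Your proposed justification cannot repair this: the duality computing these groups identifies $\Ext^i_\sD(A,\cO_{\IA^1})$ with the de Rham cohomology $\rH^i_{\dR}$ of the \emph{holonomic} dual of $A$, that is, of the module of the adjoint operator (cf.\ the paper's remark in the proof of Theorem \ref{Thm:PropertiesOfGoperators}), not of the convolution dual inside $\RS_0(\IA^1)$. Here the holonomic dual is $\sD_{\IA^1}/\sD_{\IA^1}\partial z\cong j_\ast\cO_{\IG_m}$, whose de Rham cohomology is one-dimensional in both degrees, while the convolution dual ($A$ itself) is irrelevant to these $\Ext$ groups. (A smaller issue: your first vanishing statement rests on the stability of $\cG$ under primitives, which is true --- the primitive of $g\in\cG$ is an $\cM$-solution of the $G$-operator $L\partial$, and $G$-operators, being regular singular at $0$, have all their $\cM$-solutions in the $\IC$-span of a basis of solutions in $\cG$ --- but it is asserted without proof.)

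In fairness, the paper's own proof of $(1)\Leftrightarrow(2)$ asserts the same two vanishings, deduced from the fact that $\partial$ acts bijectively on $A$ and nilpotently on $\IC[z]$; that deduction breaks on the same example, since the nilpotence is only local and the preimages $\partial^{-k}a$ have images of unbounded degree. So you have faithfully reproduced the paper's strategy, including its weakest step; but as written, the passage from solutions modulo $\IC[z]$ to honest solutions in $\cG$ is unproved, and this is the heart of the hard implication. The statement can be rescued with tools you already invoke, bypassing the $\cG_0$-bookkeeping altogether. Assume $(2)$, let $s_1,\dots,s_n$ be a basis of $\Hom_\sD(M,\cE)$ with $n=\mathrm{rank}(M)$, let $m$ be a cyclic vector, and set $g_i=s_i(m)\in\cE$. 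The map $M\to\cE^{\oplus n}$, $m'\mapsto(s_1(m'),\dots,s_n(m'))$, is injective: its kernel $K$ is a subconnection through which all the $s_i$ factor, so if $K\neq 0$ the quotient $M/K$ would have rank $<n$ but $n$ independent solutions in $\cE$, contradicting the bound $\dim\Hom_\sD(M/K,\cE)\leq\mathrm{rank}(M/K)$. Hence $M$ embeds into $\bigoplus_{i}\sD_{\IG_m}g_i$. Each $g_i$ is annihilated by an $E$-operator $L_i$ by Theorem \ref{Thm:PropertiesOfEoperators}, so $\sD_{\IG_m}g_i$ is a quotient of $\sD_{\IG_m}/\sD_{\IG_m}L_i$, and the latter is of type $E$ by your $\cO_{\IA^1}$-comparison run in the direction $(3)\Rightarrow(1)$. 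Since $j_\ast$ and $\FT$ are exact and type $G$ is stable under extensions and subquotients (Theorem \ref{Thm:PropertiesOfGModules}), being of type $E$ is stable under finite direct sums, quotients, and submodules; therefore $M$ is of type $E$, which is $(1)$.
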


\begin{proof}
\begin{par}
$(1) \Longleftrightarrow (2)$ Let $M$ be a $\sD_{\IG_m}$-module, and set $A = \FT(j_\ast M)$. To say that $M$ is of type $E$ is to say that $A$ admits a basis of solutions in $\cG$. Since modules of type $G$ are regular singular, taking Proposition \ref{Pro:PhaseStationnaire} into account, all that remains to prove is that $M$ admits a basis of solutions in $\cE$ if and only if $A$ admits a basis of solutions in $\cG$. To do so, we will use the Laplace transform $\sL\colon \cE \to \cG_0$ as described in Proposition~\ref{Pro:LaplaceSummary}. The vector space underlying~$A$ is the same as that of $M$, and we write $a=\widehat{m} \in A$ for the clone of~$m \in M$. For every $L\in \overline \IQ[z,\partial]$, the identity $\widehat{Lm} = \FT(L)\widehat m$ holds by definition. By requiring the diagram 
$$\begin{diagram}\setlength{\dgARROWLENGTH}{4mm}
\node{M}\arrow{s,l}{\widehat-}\arrow[2]{e,t}{s}\node[2]{\cE}\arrow{s,r}{\mathscr L}\\
\node{A}\arrow[2]{e,t}{t}\node[2]{\cG_0}
\end{diagram} \qquad\qquad \begin{array}{rclrcl} \widehat{\partial m}  & = & -z\widehat m, \qquad & \mathscr L (\partial f) & =& - z\mathscr L(f), \\ \widehat{z m} & = & -\partial \widehat m, & \mathscr L (z f) & = & -\partial \mathscr L(f) \end{array}$$
to commute, every $\overline \IQ[z, \partial]$-linear map $s\colon M\to \cE$ determines a $\overline \IQ[z, \partial]$-linear map \hbox{$t\colon A \to \cG_0$} and vice versa. Since the operator $\partial$ acts bijectively on $A$ and nilpotently on $\IC[z]$, the vanishing~$\Hom_\sD(A,\IC[z])=\Ext^1_\sD(A,\IC[z])=0$ holds. Hence, the exact sequence
$$0 \to \Hom_\sD(A, \IC[z]) \to \Hom_\sD(A, \cG) \xrightarrow{\:\:\cong\:\:} \Hom_\sD(A, \cG_0) \to \Ext^1_\sD(A,\IC[z]) \to \cdots$$
degenerates to an isomorphism as indicated and we obtain a linear bijection
$$\Hom_{\sD}(M, \cE) \cong \Hom_{\sD}(A, \cG).$$
The modules $A$ and $M$ have the same rank by Proposition \ref{Pro:PhaseStationnaire}, and hence $A$ admits a basis of solutions in $\cG$ if and only if $M$ admits a basis of solutions in $\cE$. 
\end{par}

\begin{par}
$(1) \Longrightarrow (3)$ Being non-singular on $\IG_m$, the module $M$ is isomorphic to $\sD_{\IG_m}/\sD_{\IG_m}L$ for some operator $L\in \overline\IQ[z,\partial]$. We claim that $L$ is an $E$-operator. Indeed, letting $\delta_0 = \sD_{\IA^1}/\sD_{\IA^1} z$ denote the Dirac module supported at $0$, there is an exact sequence of $\sD_{\IA^1}$-modules 
$$0 \to \delta_0^a \to \sD_{\IA^1}/\sD_{\IA^1} L \to j_\ast M \to  \delta_0^b \to 0$$
for some integers $a,b\geq 0$, which after Fourier transform yields an exact sequence
$$0 \to \cO_{\IA^1}^a \to \sD_{\IA^1}/\sD_{\IA^1} \FT(L) \to A \to \cO_{\IA^1}^b\to 0.$$
By assumption, $A$ is of type $G$, as well as $\cO_{\IA^1}$, and hence $\sD_{\IA^1}/\sD_{\IA^1} \FT(L)$ is of type $G$ by part~(2) of Theorem~\ref{Thm:PropertiesOfGModules}. This means that $\FT(L)$ is a $G$-operator, and thus $L$ an $E$-operator.
\end{par}

\begin{par}
$(3) \Longrightarrow (2)$ Given an $E$-operator $L \in \overline\IQ[z,\partial]$, set $M = \sD_{\IG_m}/\sD_{\IG_m}L$ and \hbox{$M_0 = \sD_{\IA^1}/\sD_{\IA^1} L$}, so that $M=j^\ast M_0$. By Theorem \ref{Thm:PropertiesOfEoperators}, the operator $L$, and hence the module $M$, admits a basis of solutions in $\cE$. It remains to check that $\FT(j_\ast M)$ is regular singular. By assumption, the module $\FT(M_0)$ is of type $G$, and hence regular singular. The kernel and the cokernel of the adjunction map $M_0 \to j_\ast M$ are torsion modules supported at $0$, and hence the kernel and the cokernel of $\FT(M_0) \to \FT(j_\ast M)$
are powers of $\cO_{\IA^1}$. Since submodules, quotients, and extensions of regular singular modules are regular singular, $\FT(j_\ast M)$ is regular singular. 
\end{par}
\end{proof}

\begin{thm}\label{Thm:TannakianSubcategoriesRHEConn}
The category $\bE$ is an abelian subcategory of $\Conn_0(\IG_m)$, which is stable under extensions, tensor product, duality, and extraction of subquotients. In other words, it is a tannakian subcategory.
\end{thm}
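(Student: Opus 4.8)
The plan is to push everything across the Fourier equivalence $M \mapsto \FT(j_\ast M)$ of Proposition~\ref{Pro:PhaseStationnaire} and \ref{Par:ConnAndRS}, which is an exact equivalence $\Conn_0(\IG_m) \to \RS_0(\IA^1)$ carrying $\bE$ bijectively onto $\bG_0 = \bG \cap \RS_0(\IA^1)$, and to treat separately the tensor structure, which this equivalence does not respect. For the exactness-type properties I would argue as follows. Let $M$ be an object of $\bE$. A submodule $M' \subseteq M$ lies in $\Conn_0(\IG_m)$, since that category is stable under subquotients (\ref{Par:ConnAndRS}), and by exactness $\FT(j_\ast M') \subseteq \FT(j_\ast M)$ is a genuine sub-$\sD_{\IA^1}$-module lying in $\RS_0(\IA^1)$. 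As $\FT(j_\ast M)$ is of type $G$ and type-$G$ modules are stable under submodules by part~(2) of Theorem~\ref{Thm:PropertiesOfGModules}, the module $\FT(j_\ast M')$ is of type $G$, hence belongs to $\bG_0$, so $M' \in \bE$; the same reasoning with quotients and with extensions (using the stability of $\Conn_0(\IG_m)$ and of type-$G$ modules under these operations) shows that $\bE$ is stable under subquotients and extensions. In particular kernels, images, and cokernels of morphisms between objects of $\bE$ again lie in $\bE$, so $\bE$ is an abelian subcategory.

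The hard part will be stability under tensor product, because the equivalence exchanges the tensor product on $\Conn_0(\IG_m)$ with additive convolution on $\RS_0(\IA^1)$, so the tensor-stability of $\bG$ in Theorem~\ref{Thm:PropertiesOfGModules} does \emph{not} transfer directly. Instead I would use the solution characterisation of Proposition~\ref{Pro:WishListC}(2). Given $M_1, M_2 \in \bE$, each admits a basis of solutions in the differential algebra $\cE$; since $\cE$ is stable under products, the injective comparison map~\eqref{Eqn:SolutionsOfTensorProduct} applied with $\cA = \cE$ is bijective by a dimension count, and therefore $M_1 \otimes M_2$ also admits a basis of solutions in $\cE$. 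As $\Conn_0(\IG_m)$ is already known to be stable under tensor products, $M_1 \otimes M_2$ lies in $\Conn_0(\IG_m)$, and Proposition~\ref{Pro:WishListC}(2) then yields $M_1 \otimes M_2 \in \bE$.

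It remains to handle duality. Given $M \in \bE$, Proposition~\ref{Pro:WishListC}(3) furnishes an $E$-operator $L$ with $M \cong \sD_{\IG_m}/\sD_{\IG_m}L$, so the generic fibre of $M$ is $\overline\IQ(z)[\partial]/\overline\IQ(z)[\partial]L$ and that of its dual $M^\vee$ is $\overline\IQ(z)[\partial]/\overline\IQ(z)[\partial]L^\ast$, where $L^\ast$ is the adjoint. By part~(2) of Theorem~\ref{Thm:PropertiesOfEoperators}, $L^\ast$ is again an $E$-operator, so $\FT(L^\ast)$ is a $G$-operator, and through the identification~\eqref{Eqn:ModuleAndOperatorFT} it presents the generic fibre of $\FT(j_\ast M^\vee)$. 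Hence $\FT(j_\ast M^\vee)$ is of type $G$; as $M^\vee$ lies in $\Conn_0(\IG_m)$, this means $M^\vee \in \bE$. Alternatively, once tensor products and subquotients are available, one may write $M^\vee \cong \Lambda^{n-1}M \otimes (\det M)^\vee$ with $n = \mathrm{rank}(M)$, note that $\Lambda^{n-1}M$ is a subquotient of $M^{\otimes(n-1)} \in \bE$, and that $\det M \in \bE$ is a rank-one object $E(s)\otimes K(a)$ whose dual $E(-s)\otimes K(-a)$ lies in $\bE$ by Example~\ref{Exmp:ExponentialPeriodModules}. Combining the three steps, $\bE$ is stable under subquotients, extensions, tensor products, and duals, which is precisely the assertion that it is a tannakian subcategory.
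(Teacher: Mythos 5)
Your arguments for stability under subquotients, extensions, and tensor products coincide with the paper's own proof: the first two are transported through the exact equivalence $M\mapsto\FT(j_\ast M)$ to the corresponding stability statements for type-$G$ modules in Theorem~\ref{Thm:PropertiesOfGModules}, and the tensor step is exactly the combination of Proposition~\ref{Pro:WishListC}(2) with the map \eqref{Eqn:SolutionsOfTensorProduct}. These parts are correct.

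Your \emph{primary} duality argument, however, contains a genuine gap. The step ``through the identification \eqref{Eqn:ModuleAndOperatorFT}, $\FT(L^\ast)$ presents the generic fibre of $\FT(j_\ast M^\vee)$'' does not follow from what precedes it, for two reasons. First, you only know that the generic fibre of $M^\vee$ is presented by $L^\ast$; to apply \eqref{Eqn:ModuleAndOperatorFT} you need an isomorphism of $\sD$-modules, and even granting $M^\vee\cong\sD_{\IG_m}/\sD_{\IG_m}L^\ast$, the $\sD_{\IA^1}$-module $j_\ast M^\vee$ differs from $\sD_{\IA^1}/\sD_{\IA^1}L^\ast$ by Dirac modules supported at $0$, exactly as in the exact sequence displayed in the proof of Proposition~\ref{Pro:WishListC}, $(1)\Rightarrow(3)$. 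Second, and decisively, the Fourier transform does not commute with passage to generic fibres: those Dirac discrepancies become copies of $\cO_{\IA^1}$ after Fourier transform, and these \emph{do} change the generic fibre (its rank, for instance). So knowing the generic fibre of $j_\ast M^\vee$ tells you nothing directly about the generic fibre of $\FT(j_\ast M^\vee)$. A correct repair is to upgrade the generic-fibre statement to an isomorphism $M^\vee\cong\sD_{\IG_m}/\sD_{\IG_m}L^\ast$ of $\sD_{\IG_m}$-modules (both sides are connections on $\IG_m$ --- the leading coefficients of $L$ and $L^\ast$ agree up to sign --- and a generic isomorphism between connections on a curve is an isomorphism, since a coherent torsion module with connection vanishes), and then to quote the implication $(3)\Rightarrow(1)$ of Proposition~\ref{Pro:WishListC} as a black box: that implication is precisely what absorbs the $\cO_{\IA^1}$-corrections.

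Fortunately, your \emph{alternative} duality argument is correct and makes the flawed paragraph unnecessary; it is also genuinely different from the paper's. The paper deduces duality from the formula $M^\vee = j^\ast\FT^{-1}([-z]^\ast A^\vee)$ with $A=\FT(j_\ast M)$, reducing everything to the stability of type-$G$ modules under duals (Theorem~\ref{Thm:PropertiesOfGModules}) and under the pullback $[-z]^\ast$. Your route via $M^\vee\cong\Lambda^{n-1}M\otimes(\det M)^\vee$ needs no further Fourier-theoretic input, only the already-established tensor and subquotient stability together with the classification of rank-one objects; the one point to add is that the Kummer parameter $a$ in $\det M\cong E(s)\otimes K(a)$ is rational --- this follows from $\det M$ being of type $E$, since type-$G$ modules have rational exponents --- so that Example~\ref{Exmp:ExponentialPeriodModules} indeed applies to $E(-s)\otimes K(-a)$. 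This is the same fact the paper invokes at the start of the proof of Theorem~\ref{Thm:symconst}.
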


\begin{proof}
Stability of $\bE \subseteq \Conn_0(\IG_m)$ under extensions and extraction of subquotients follows from the corresponding statements for the category $\bG$, which we deduce from Theorem \ref{Thm:PropertiesOfGModules}. The characterisation (2) in Proposition \ref{Pro:WishListC} combined with \eqref{Eqn:SolutionsOfTensorProduct} shows that modules of type~$E$ are stable under tensor product. Finally, stability of $\bE$ under duality follows from Theorem~\ref{Thm:PropertiesOfGModules} and compatibility of the Fourier transform with duals, in the sense that 
$$M^\vee = j^\ast \FT^{-1}([-z]^\ast A^\vee)$$
holds for every object $M = j^\ast\FT^{-1}(A)$ of $\Conn_0(\IG_m)$.
\end{proof}

\begin{cor}
The category $\bG_0$ is stable under additive convolution.
\end{cor}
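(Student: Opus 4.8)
The plan is to deduce the statement by transporting, across the Fourier equivalence of \ref{Par:ConnAndRS}, the stability of $\bE$ under tensor product established in Theorem \ref{Thm:TannakianSubcategoriesRHEConn}. Recall that $M \mapsto \FT(j_\ast M)$ is an equivalence $\Conn_0(\IG_m) \to \RS_0(\IA^1)$ exchanging the tensor product of connections with additive convolution.

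First I would check that this equivalence restricts to one between $\bE$ and $\bG_0$. By Definition \ref{defn:modulestypeGandE}, an object $M$ of $\Conn_0(\IG_m)$ lies in $\bE$ exactly when $A = \FT(j_\ast M)$ is of type $G$; and since $M$ already belongs to $\Conn_0(\IG_m)$, its image $A$ automatically lies in $\RS_0(\IA^1)$, that is, is regular singular with vanishing de Rham cohomology. Thus $M \in \bE$ if and only if $A$ is of type $G$ with vanishing cohomology, which is precisely the condition $A \in \bG_0$. Conversely, any object of $\bG_0$ is of type $G$, hence regular singular, and together with vanishing cohomology this places it in $\RS_0(\IA^1)$; its preimage $j^\ast\FT^{-1}(A)$ then lies in $\bE$. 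So $\bG_0$ is exactly the essential image of $\bE$.

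Granting this, the conclusion follows at once: for $A, B \in \bG_0$, set $M_A = j^\ast\FT^{-1}(A)$ and $M_B = j^\ast\FT^{-1}(B)$, both in $\bE$. Theorem \ref{Thm:TannakianSubcategoriesRHEConn} gives $M_A \otimes M_B \in \bE$, and since the equivalence sends tensor products to additive convolutions, $A \ast B \cong \FT(j_\ast(M_A \otimes M_B))$ lies in $\bG_0$. The argument is a formal transport of structure, so I do not expect a genuine obstacle; the only step demanding care is the identification in the first paragraph, where one must confirm that the ``vanishing cohomology'' clause defining $\bG_0$ coincides exactly with landing in $\RS_0(\IA^1)$, so that no objects are lost on either side of the equivalence.
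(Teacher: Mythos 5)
Your proof is correct and takes essentially the same route as the paper: the paper's proof is the one-line observation that the equivalence $\FT(j_\ast -)\colon \bE \to \bG_0$ carries the tensor product in $\bE$ to additive convolution in $\bG_0$, which together with Theorem \ref{Thm:TannakianSubcategoriesRHEConn} gives the claim. Your first paragraph merely makes explicit the verification, left implicit in the paper, that $\bG_0$ is precisely the essential image of $\bE$ under the equivalence of \ref{Par:ConnAndRS}.
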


\begin{proof}
The equivalence of categories $\FT(j_\ast -)\colon \bE\to \bG_0$ carries the tensor product in $\bE$ to additive convolution in $\bG_0$. 
\end{proof}

\vspace{14mm}
\section{\texorpdfstring{Hypergeometric $E$-functions and their associated $\sD$-modules}{Hypergeometric E-functions and D-modules}}

\begin{par}
In this section, we introduce a category of $\sD_{\IG_m}$-modules $\bH \subseteq \bE$ whose solutions contain all polynomial expressions in hypergeometric $E$\nobreakdash-functions. Due to the historic nature of the subject, several competing conventions for hypergeometric functions and their differential equations can be found in the literature. To avoid confusion, we start by briefly reviewing~them. 
\end{par}

\begin{para}
Throughout, we adopt the notation $\vartheta = z\partial$. For each integer $n \geq 0$, the $n$-th \emph{rising Pochhammer symbol} of a complex number $x$ is defined as
$$(x)_n = x(x+1)(x+2)\cdots (x+n-1),$$
with the convention $(x)_0=1$. Given integers~$0 \leq p \leq q$ and rational numbers $a_1, \ldots,a_p \in \IQ$ and $b_1, \ldots, b_q \in \IQ\setminus \IZ_{\leq 0}$, we call the formal power series
\begin{equation}\label{Eqn:DefHypergeometricpFq}
F\biggl(\begin{matrix} a_1, \ \ldots, \ a_p \\ b_1, \ \ldots, \ b_q \end{matrix} \:\bigg|\:\: z \biggr)=\sum_{n=0}^\infty \frac{(a_1)_n(a_2)_n\cdots (a_p)_n}{(b_1)_n(b_2)_n\cdots (b_q)_n} \: z^n
\end{equation}
a \emph{hypergeometric function} of type $(p, q)$. The function classically denoted by $_{p}F_{q-1}$ corresponds to the case where the last parameter $b_q$ is equal to $1$, and hence an $n!$ appears in the denominator. The radius of convergence of this series is equal to $1$ if $p=q$, and infinite if~$p<q$. Given non-zero polynomials $P, Q \in \overline\IQ[t]$ with rational roots of degrees $p$ and $q$ and leading coefficients $\lambda_P$ and $\lambda_Q$ respectively, we call
\begin{equation}\label{Eqn:DefHypPQOperator}
\Hyp(P, Q)= Q(\vartheta)-zP(\vartheta)    
\end{equation}
a \emph{hypergeometric differential operator} of type\footnote{Note that the roles of $P$ and $Q$ seem to be reversed with respect to Katz's notation in~\cite[Ch.\,3]{Katz}. Behind this reversal is the change of variables $z\mapsto z^{-1}$, which transforms $\Hyp(P(t), Q(t))$ into $-z^{-1}\Hyp(Q(-t), P(-t))$ and ensures that the irregular singularity is at infinity if $p<q$.} $(p,q)$. The condition $p\leq q$ implies that the operator $\Hyp(P, Q)$ has order $q$. Its singularities are $\{0, \infty\}$ if $p<q$, and $\{0, \lambda_Q/\lambda_P, \infty\}$ if $p=q$. In both cases,~$z=0$ is a regular singularity with indicial polynomial $Q$, and hence every hypergeometric differential operator as above admits a basis of solutions in the differential algebra $\cM$. The adjoint of a hypergeometric operator is again hypergeometric, as it is given by the formula (see \cite[(3.1)]{Katz})
\begin{equation}\label{Eqn:AdjHyp}
\Hyp(P(t),Q(t))^\ast = \Hyp(P(-2-t),Q(-1-t)).
\end{equation}
\end{para}

\begin{para}\label{sec:solutionshyper}
Let $L=\Hyp(P,Q)$ be a hypergeometric operator of type $(p,q)$. For the sake of completeness, we recall how to produce a basis of solutions of the equation $Lu=0$ using Frobenius's method; see, for example, \cite[Ch.\,16]{Ince}. The procedure starts with an expression
\begin{equation}\label{Eqn:FrobeniusAnsatz}
w(z,t) = z^t \cdot \sum_{n=0}^\infty c_n(t)z^n,    
\end{equation}
where $t$ is an auxiliary complex parameter and $c_n$ a rational function of $t$. While~$c_0(t)$ is left undetermined for the time being, we set
$$c_n(t) = \frac{P(t+n-1)}{Q(t+n)}c_{n-1}(t) = c_0(t)\prod_{k=0}^{n-1}\frac{P(t+k)}{Q(t+k+1)}$$
for each $n\geq 1$. With this definition, applying $L$ to $w$ yields
$$Lw(z,t) = z^t\cdot \sum_{n=0}^\infty c_n(t)\Big(Q(t+n) - zP(t+n)\Big)z^n = z^t\cdot c_0(t)Q(t).$$
Thus, if $\beta$ is a root of the indicial polynomial $Q$ satisfying $Q(\beta+n)\neq 0$ for all integers $n\geq 1$, then we can set $c_0(t)=1$, specialise $t=\beta$, and obtain the solution
\begin{equation}\label{Eqn:HypergeometricSolution}
z^\beta \cdot F\biggl(\begin{matrix} \beta-\alpha_1, & \ldots, & \beta- \alpha_p \\ \beta-\beta_1+1, & \ldots, & \beta-\beta_q+1 \end{matrix} \:\bigg|\:\: \lambda z \biggr),   
\end{equation}
where $P(t) = \lambda_P(t-\alpha_1)\cdots (t-\alpha_p)$ and $Q(t) = \lambda_Q(t-\beta_1)\cdots (t-\beta_q)$ and $\lambda = \lambda_P/\lambda_Q$. In the \emph{non-resonant} case, that means if the roots of $Q$ are simple and distinct modulo $\IZ$, a basis of solutions of~$L$ is obtained this way. In general, we organise the roots of $Q$, always counted with multiplicity, into congruence classes modulo $\IZ$. After ordering the $r+1$ roots in a chosen class, say $\beta_0, \beta_1, \ldots, \beta_r$, so that $\Re(\beta_r) \leq \cdots \leq \Re(\beta_0)$ holds, we set $d= \beta_0-\beta_r$ and
$$c_0(t) = Q(t+1)Q(t+2) \cdots Q(t+d).$$
With this choice of $c_0$, the rational function $c_n$ is regular at $\beta_k$ for all $n\geq 0$ and $r\geq k \geq 0$. Moreover, $\beta_k$ is a root of $c_0(t)Q(t)$ of order $\geq k+1$. It follows that the functions
$$\big(\partial^k_t w(z,t)\big)\Big|_{t=\beta_k} \qquad \qquad k=0,1,2,\ldots,r$$
are solutions of $L$. Their linear independence is straightforward to check, since in the expression 
$$\big(\partial^k_t w(z,t)\big)\Big|_{t=\beta_k} = \sum_{j=0}^k  z^{\beta_k}\log(z)^{k-j}\cdot  \binom{k}{j} \sum_{n=0}^\infty c_n^{(j)}(\beta_k)z^n $$
the term with the highest power of the logarithm is, up to a non-zero scalar, $z^{\beta_k} \log(z)^k F(\lambda z)$, where $F$ is the hypergeometric series given in \eqref{Eqn:HypergeometricSolution}. It is worth noting that
$$\sum_{n=0}^\infty c_n^{(j)}(\beta_k)z^n$$
is in general \emph{not} a hypergeometric series. Its coefficients resemble polynomial expressions of harmonic numbers in various guises; see \cite[(2.1), (2.4), (2.5)]{Vovkodav}. It is a natural question whether these series are polynomial expressions in hypergeometric functions. For example, for $P=-1$ and $Q=t^2/4$, the resulting series $\sum_{n=1}^\infty \big(\sum_{m=1}^n \tfrac 1m\big) \frac{z^n}{n!}$ is indeed a product of hypergeometric functions as shown in \cite[p.\,2]{RivoalFischler}. It seems however doubtful that this is always the case. 
\end{para}

\begin{thm}\label{Thm:RegularHypergeimetricIsG}
Let $P, Q \in \overline \IQ[t]$ be non-zero polynomials with rational roots. If $P$ and $Q$ have the same degree, then the hypergeometric operator $\Hyp(P,Q)$ is a $G$-operator.
\end{thm}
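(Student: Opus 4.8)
By Definition \ref{Def:GOperatorEoperator}, being a $G$-operator means admitting a basis of solutions in $\cG$, and by the discussion in \ref{Par:DefinitionsOfGoperator} this property is insensitive to left and right multiplication by nonzero rational functions. The plan is therefore to produce such a basis directly from the Frobenius solutions of \ref{sec:solutionshyper}. The entire arithmetic content is concentrated in a single claim: \emph{when $p=q$ and all parameters are rational, the hypergeometric series \eqref{Eqn:DefHypergeometricpFq} is a $G$-series}. Granting this, the non-resonant case is immediate, since a full basis of solutions is then given by the functions $z^{\beta}F(\lambda z)$ of \eqref{Eqn:HypergeometricSolution}, whose power-series factors are again hypergeometric of type $(p,p)$ with rational shifted parameters $\beta-\alpha_i$ and $\beta-\beta_j+1$, hence lie in $\cG$.

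To prove the claim I would verify the two conditions of Definition \ref{Def:EFunctionGFunction}. The coefficient bound is easy: since $p=q$, the ratio of consecutive coefficients tends to $\lambda=\lambda_P/\lambda_Q$, so the $n$-th coefficient grows like $|\lambda|^n n^{O(1)}$, which is geometric (and the same holds after applying any embedding $\sigma$, as $\sigma(\lambda)^n$ is controlled). The denominator bound is the crux, and it is here that the hypothesis $\deg P=\deg Q$ is indispensable. Writing all parameters over a common denominator $N$, the $v^n$-type powers cancel precisely because $p=q$, and the $n$-th coefficient becomes a ratio of two products of $pn$ integers, each running over an arithmetic progression of length $n$. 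For a prime $\ell\nmid N$, a Legendre-type count shows that the number of factors divisible by $\ell^s$ is $n/\ell^s+O(1)$, so the $\ell$-adic valuation of each product equals $n/(\ell-1)+O(\log_\ell n)$; the leading terms $p\cdot n/(\ell-1)$ in numerator and denominator cancel \emph{because $p=q$}, leaving only an $O(\log n)$ fluctuation per prime. Since the factors have size $O(n)$, only primes $\ell\leq O(n)$ contribute, and summing $O(\log n)$ of these against $\log\ell$ over such primes gives $\log d_n\leq Cn$ by prime counting. (For $p<q$ the uncancelled term $(q-p)\,n/(\ell-1)$ survives and produces factorial-size denominators, consistent with the series then being an $E$-function rather than a $G$-series.)

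From the claim to a full basis, only the resonant case remains. There Frobenius yields the solutions $\bigl(\partial_t^k w(z,t)\bigr)|_{t=\beta_k}$, which are $\overline\IQ$-combinations of $z^{\beta_k}\log(z)^{k-j}\sum_n c_n^{(j)}(\beta_k)z^n$, so it suffices to check that the parameter-derivative series $\sum_n c_n^{(j)}(\beta_k)z^n$ are $G$-series. For $j=0$ this is the claim (up to the polynomial normalisation $c_0$, harmless since $\cG$ is a differential algebra closed under products). For $j\geq 1$ I would estimate via the logarithmic derivative: $c_n'(t)/c_n(t)$ is a sum of $O(n)$ terms of the form $P'/P-Q'/Q$, each of size $O(1/k)$, hence $O(\log n)$ in total, and similarly for higher derivatives, so both the coefficient and the denominator growth remain geometric. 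One then concludes by Definition \ref{Def:GOperatorEoperator} that $\Hyp(P,Q)$ is a $G$-operator. Alternatively, and more cleanly, one may bypass the resonant bookkeeping entirely by verifying Galochkin's condition directly on the recurrence $Q(t+n)c_n=P(t+n-1)c_{n-1}$, which is a property of the operator itself; the same $p=q$ cancellation then shows the size is finite. In either route the main obstacle is the denominator estimate of the second paragraph: the whole theorem hinges on the $\ell$-adic valuation cancellation that occurs exactly when $\deg P=\deg Q$.
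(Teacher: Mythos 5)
Your proposal follows what the paper itself calls the ``more elementary approach'' --- the second of the three routes its proof sketches: check that the Frobenius solutions of \ref{sec:solutionshyper} lie in $\cG$, the arithmetic heart being that type-$(p,p)$ hypergeometric series with rational parameters are $G$-series. Your $\ell$-adic cancellation argument for that core claim is correct and is exactly what the paper delegates to Andr\'e's book; the only routine additions needed are the finitely many primes dividing $N$ and the denominator of $\lambda^n$, both of which contribute geometric factors. So in the non-resonant case your proof is complete and matches the paper's intended route.

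The genuine soft spot is the resonant case, where your argument as written would fail in two ways. First, the identity $c_n'/c_n=\sum\bigl(P'/P-Q'/Q\bigr)$ can only be evaluated at $t=\beta_k$ when $c_n(\beta_k)\neq 0$, and this fails precisely when $P$ has a root $\alpha$ with $\alpha-\beta_k\in\IZ_{\geq 0}$: then $c_n(\beta_k)=0$ for all large $n$, and one must first factor out the vanishing linear factors and induct on their number. Second, and more substantively, your claim that ``the denominator growth remains geometric'' does not follow from the archimedean bound $O(\log n)$ on the logarithmic derivative: the terms of that sum are rationals such as $1/(\beta_k-\alpha_j+i)$, and the denominator of their sum is controlled not by its size but by $\mathrm{lcm}(1,\dots,O(n))\leq C^n$ (Chebyshev/prime counting) --- a separate non-archimedean input, of a different nature from the $p=q$ cancellation, that must be invoked for each power sum appearing in $c_n^{(j)}(\beta_k)$. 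This is not a pedantic point: the paper remarks in \ref{sec:solutionshyper} that these derivative series are harmonic-number-like and in general not hypergeometric, so the statement ``the entire arithmetic content is concentrated in a single claim'' is too strong. Both gaps are fixable, but note that the paper's third route avoids them entirely: since every factor of a hypergeometric operator is hypergeometric with the same $q-p$ or a Kummer operator, one reduces to the irreducible case, where Chudnovsky's theorem as stated in \ref{Par:DefinitionsOfGoperator} needs only the single non-resonant solution $z^\beta F(\lambda z)$. Your alternative via ``Galochkin's condition on the recurrence'' gestures in this direction, but as stated it is not an argument --- Galochkin's condition concerns the denominators of the iterated derivative matrices $G_s/s!$ of the companion system, not the coefficient recurrence, and verifying it for $\Hyp(P,Q)$ is a computation of the same order of difficulty as the one you are trying to avoid.
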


\begin{proof}
This is well-known and can be proved in several ways. A first possibility is to show that~$\Hyp(P,Q)$ comes from geometry, in the sense that it is a factor of some Picard\nobreakdash-Fuchs differential operator over $\overline\IQ$, as is done for example in \cite[I\,4.4, p.\,31]{Gfunctions} or \cite[Th.\,5.4.4]{Katz}. The theorem then follows from the fact that those are $G$-operators, which is proved in \cite[V\,App.]{Gfunctions}. A more elementary approach is to check that all solutions constructed in~\eqref{sec:solutionshyper} belong to the algebra~$\cG$, which in the non-resonant case amounts to checking that hypergeometric series with an equal number~$p=q$ of rational parameters are $G$\nobreakdash-series; see, for example,~\hbox{\cite[I\,4.4, p.\,24]{Gfunctions}.} A third possibility is to use that factors of hypergeometric differential operators are either hypergeometric with the same $q-p$ or Kummer operators to reduce to the case where $\Hyp(P,Q)$ is irreducible, and then deduce the result from Chudnovsky's theorem stated in~\ref{Par:DefinitionsOfGoperator}.
\end{proof}

\begin{para}\label{Par:HypergeometricEFunctions}
Let $0\leq p<q$ be integers. Siegel showed that for any choice of $a_1, \ldots, a_p \in \IQ$ and $b_1, \ldots,b_q \in \IQ\setminus \IZ_{\leq 0}$ and $\lambda \in \overline\IQ$, the function
\begin{equation}\label{Eqn:DefHypergeometricEfct}
F\biggl(\begin{matrix} a_1, \ \ldots, \ a_p \\ b_1, \ \ldots, \ b_q \end{matrix} \:\bigg|\:\: \lambda z^{q-p} \biggr)
\end{equation}
is an $E$\nobreakdash-function; see \cite[p.\,224]{Siegel1929} and \cite[II.9]{Siegel}. Following him, we call it a \emph{hypergeometric $E$\nobreakdash-function}. The series~\eqref{Eqn:DefHypergeometricpFq} also makes  sense for algebraic or even complex parameters, but it is then only in very special cases an $E$\nobreakdash-function, and in that case always a $\overline\IQ$-linear combination of hypergeometric series with rational parameters; the precise conditions have been worked out by Galoshkin \cite{galoshkin}.
\end{para}

\begin{para}
By changing the variable $z$ in the classical hypergeometric differential operator to~$z^{q-p}$, we obtain an operator annihilating \eqref{Eqn:DefHypergeometricEfct}. The main theorem \ref{Thm:EHypergeometricOperators} of this section implies that such a modified operator admits a basis of solutions in the algebra $\cE$. We shall use the following standard notation for Kummer pullback and Kummer induction of $\sD_{\IG_m}$\nobreakdash-modules. Given a non-zero integer $m$, we let $[m] \colon \IG_m \to \IG_m$ denote the étale cover $z \mapsto z^m$ and~$[m]^\ast$ the induced \emph{Kummer pullback} map on $\sD_{\IG_m}$-modules. For example, if $M$ corresponds to a differential operator $L(z, \vartheta)$, then $[m]^\ast M$ corresponds to the operator $L(z^m, \frac{1}{m}\vartheta)$. The functor $[m]^\ast$ has a right adjoint $[m]_\ast$ called \emph{Kummer induction}, and we say that a $\sD_{\IG_m}$-module is \emph{Kummer induced} if it is of the form $[m]_\ast M$ for some $\sD_{\IG_m}$-module $M$ and some $m\geq 2$. 
\end{para}

\begin{defn}\label{Defn:E-hypergeometricmodule}
Let $0 \leq p <q$ be integers, and set $m=q-p$. Given non-zero polynomials with rational roots~$P,Q \in \overline \IQ[t]$ of degrees $p$ and $q$ respectively, we call
$$H(P,Q) = \sD_{\IG_m}/\sD_{\IG_m} \Hyp(P,Q) \qqet H^E(P,Q)=[m]^\ast H(P,Q)$$
a \emph{hypergeometric $\sD_{\IG_m}$-module} and  an $E$-\emph{hypergeometric $\sD_{\IG_m}$-module} of \emph{type} $(p, q)$ respectively. We shall also refer to 
\[
[m]^\ast \Hyp(P,Q)=Q(\tfrac 1m \vartheta)-z^m P(\tfrac 1m \vartheta)
\] as an \emph{$E$-hypergeometric operator}.
\end{defn}

\begin{thm}\label{Thm:EHypergeometricOperators}
Let $0 \leq p <q$ be integers, and set $m=q-p$. Let $P, Q \in \overline \IQ[t]$ be non\nobreakdash-zero polynomials with rational roots of degrees $p$ and $q$ and leading coefficients $\lambda_P$ and $\lambda_Q$ respectively. The $\sD_{\IG_m}$-module $M= H^E(P,Q)$ is of type $E$ and has divisor 
$$\div(M) = p[0] + \sum_{k=0}^{m-1}[m\cdot \rho \cdot e^{2\pi i k/m}],$$
where $\rho$ is an $m$-th root of $\lambda_P/\lambda_Q$.
\end{thm}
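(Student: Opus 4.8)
The plan is to deduce that $M=H^E(P,Q)$ lies in $\bE$ from criterion~(2) of Proposition~\ref{Pro:WishListC}, and then to read the divisor off the formal decomposition of $M$ at infinity. Since the Kummer pullback $[m]^\ast$ is the substitution $z\mapsto z^m$ on solutions, each solution \eqref{Eqn:HypergeometricSolution} of $\Hyp(P,Q)$, together with the logarithmic companions produced by Frobenius's method in~\ref{sec:solutionshyper}, pulls back to a function of the form $z^{m\beta}\log(z)^k F(\cdots\mid\lambda z^m)$. Here $m\beta\in\IQ$ because the roots of $Q$ are rational, and $F(\cdots\mid\lambda z^m)$ is a hypergeometric $E$-function in the sense of~\ref{Par:HypergeometricEFunctions}, its parameters $\beta-\alpha_i$ and $\beta-\beta_j+1$ being rational. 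Each such function therefore lies in $\cE$, and counting them shows that $M$ admits a basis of solutions in $\cE$.

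Next I would check that $M\in\Conn_0(\IG_m)$. In $[m]^\ast\Hyp(P,Q)=Q(\tfrac1m\vartheta)-z^mP(\tfrac1m\vartheta)$ the coefficient of $\partial^q$ equals $\lambda_Q m^{-q}z^q$, which is invertible on $\IG_m$; hence $M$ is non-singular there, and it is regular singular at $0$ because $[m]$ is tamely ramified over the origin. To control the slopes at infinity I would compute the Newton polygon of $\Hyp(P,Q)$: the monomials of $Q(\vartheta)$ and of $-zP(\vartheta)$ contribute the vertices $(q,0)$ and $(p,1)$, and the edge joining them has slope $-1/m$. Thus $\Hyp(P,Q)$ has at infinity a regular part of rank $p$ and a single slope-$1/m$ part of rank $m=q-p$. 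As the étale map $z\mapsto z^m$ multiplies slopes by $m$, the module $M$ has slopes in $\{0,1\}$ at infinity, so $M\in\Conn_0(\IG_m)$. Proposition~\ref{Pro:WishListC} then yields that $M$ is of type $E$.

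It remains to compute $\div(M)$, which by~\ref{Par:IntroDivisorOfM} and~\ref{Para:FibreFunctor} records the exponentials $e^{sz}$ occurring in the decomposition~\eqref{Eqn:InfinityStokesDecomposition} of $M_\infty$. The rank-$p$ regular part of $\Hyp(P,Q)$ at infinity pulls back to a slope-$0$ (hence $s=0$) part of $M_\infty$ and so contributes $p[0]$. For the slope-$1/m$ part, writing a formal solution of $\Hyp(P,Q)$ as $\exp(S(w))$ in the variable $w$ at $w=\infty$, the determining equation reads $(wS')^m=(\lambda_P/\lambda_Q)\,w$, so that $S=m\rho\,e^{2\pi i k/m}w^{1/m}$ for $k=0,\ldots,m-1$, where $\rho$ is an $m$-th root of $\lambda_P/\lambda_Q$. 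These $m$ determinations are permuted transitively by the monodromy $w^{1/m}\mapsto e^{2\pi i/m}w^{1/m}$, forming a single slope-$1/m$ piece of rank $m$. After the pullback $w=z^m$, with the branch $w^{1/m}=z$, each becomes a genuine exponential $e^{s_k z}$ with $s_k=m\rho\,e^{2\pi i k/m}$, of multiplicity one. Summing the contributions gives
\[
\div(M)=p[0]+\sum_{k=0}^{m-1}\bigl[\,m\rho\,e^{2\pi i k/m}\,\bigr],
\]
as claimed; equivalently, this is $\sing(\FT(j_\ast M))$ by~\eqref{Eqn:SingAndDiv}.

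The delicate point is the local analysis at infinity: turning the eikonal computation into a rigorous Levelt--Turrittin statement that the slope-$1/m$ part is irreducible of rank $m$ with $m$ distinct exponential parts, each of multiplicity one, and then tracking these faithfully through the Kummer pullback, where slopes scale by $m$ and the branches of $w^{1/m}$ become single-valued in $z$. This is where I would lean on Katz's description of the local structure of hypergeometric modules at infinity to justify the heuristic.
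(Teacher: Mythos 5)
Your route is genuinely different from the paper's, but it has a gap at its first and most essential step. You claim that every solution of $\Hyp(P,Q)$ produced by Frobenius's method, including the logarithmic ones, pulls back under $z\mapsto z^m$ to a function of the form $z^{m\beta}\log(z)^k F(\cdots\mid\lambda z^m)$ with $F$ hypergeometric. This is false in the resonant case (repeated roots of $Q$, or distinct roots congruent modulo $\IZ$), which the theorem must cover since $Q$ is an arbitrary polynomial with rational roots. As recalled in \ref{sec:solutionshyper}, the logarithmic solutions are sums $\sum_{j=0}^k \binom{k}{j}\, z^{\beta_k}\log(z)^{k-j}\sum_{n} c_n^{(j)}(\beta_k)z^n$, and the paper explicitly warns that the derivative series $\sum_n c_n^{(j)}(\beta_k)z^n$ for $j\geq 1$ are in general \emph{not} hypergeometric series; their coefficients involve harmonic-number-like sums. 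Siegel's theorem (\ref{Par:HypergeometricEFunctions}) only asserts that hypergeometric series of type $(p,q)$ with $p<q$ and rational parameters are $E$-functions; it says nothing about these companion series. So your verification that $M$ admits a basis of solutions in $\cE$ --- which is exactly what criterion (2) of Proposition \ref{Pro:WishListC} demands --- works only in the non-resonant case. Closing the gap would require either direct $E$-function growth estimates for the derivative series (an arithmetic argument about sizes and denominators of partial-fraction sums, not a formality), or a structural detour.

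The paper's proof is precisely such a detour and never touches solutions: Katz's computation shows that $\FT(\partial^m[m]^\ast\Hyp(P,Q))$ equals, up to sign, $[m]^\ast\Hyp\big(Q(-t-\tfrac1m),R(-t-\tfrac1m)\big)$ with $R(t)=(mt+1)\cdots(mt+m)P(t)$, a Kummer pullback of a hypergeometric operator of type $(q,q)$; such operators are $G$-operators by Theorem \ref{Thm:RegularHypergeimetricIsG}, whose proof is structural (Picard--Fuchs, or reduction to the irreducible case plus Chudnovsky) and is insensitive to resonance. Both the type-$E$ property and the divisor formula are then extracted from exact sequences relating $M_1=\sD_{\IA^1}/\sD_{\IA^1}\partial^m L$ to $j_\ast M$, together with Pochhammer's theorem on vanishing cycles and the identity \eqref{Eqn:SingAndDiv}, rather than from the formal structure at infinity. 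For what it is worth, your divisor computation itself (slope-$0$ part of rank $p$, slope-$\tfrac1m$ part with exponential parts $m\rho\, e^{2\pi ik/m}z^{1/m}$, each of multiplicity one after pullback) is correct and can be justified by Katz's local theory, as you propose; that acknowledged gap is benign. The unproved membership of the resonant solutions in $\cE$ is the real one.
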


\begin{proof}
\begin{par}
The essential part of the proof is a calculation due to Katz; see \cite[proof of Th.\,6.2.1]{Katz}. We set $R(t)= (mt+1)(mt+2)\cdots (mt+m)P(t)$, define operators $L$ and $K$ as
$$L = [m]^\ast\Hyp(P,Q) \qqet K  = \Hyp\big(Q(-t-\tfrac 1m), R(-t-\tfrac 1m)\big),$$
and consider the $\sD_{\IA}$-modules
$$M_0 = \sD_{\IA^1}/\sD_{\IA^1}L, \qquad M_1 = \sD_{\IA^1}/\sD_{\IA^1}\partial^mL, \qqet A_0 = \sD_{\IA^1}/\sD_{\IA^1}K.$$
The $E$-hypergeometric module $M$ is given by $M = j^\ast M_0$. Since $K$ is a  hypergeometric operator of type $(q,q)$, it follows from Theorem \ref{Thm:RegularHypergeimetricIsG} that $A_0$ is of type $G$. Its singularities are located at $0$, at $m^m\lambda_P/\lambda_Q$, and at $\infty$. By statement (4) of Theorem \ref{Thm:PropertiesOfGoperators}, the module~$[m]^\ast A_0$ is of type $G$ as well. We claim that there is an isomorphism of $\sD_{\IA^1}$-modules
\begin{equation}\label{Eqn:FTofKummerPullback}
\FT(M_1) \cong [m]^\ast  A_0.    
\end{equation}
To show this, recall that the Fourier transform of an operator is obtained from it by replacing~$\partial$ with $z$ and $z$ with $-\partial$, and thus $\vartheta = z\partial$ with $-\partial z = -\vartheta-1$. Hence, 
\begin{eqnarray*}
\FT(\partial^mL) & = & \FT(\partial^m Q(\tfrac 1m\vartheta)-\partial^mz^m P(\tfrac 1m \vartheta))\\
& = & z^m Q\big(-\tfrac 1m\vartheta -\tfrac 1m\big)-(-1)^{m} z^m\partial^m P\big(-\tfrac 1m\vartheta-\tfrac 1m\big) \\
&= & z^m Q\big(-\tfrac 1m\vartheta -\tfrac 1m\big)-R\big(-\tfrac 1m\vartheta-\tfrac 1m\big) \\
& = & [m]^\ast \Big(z Q\big(-\vartheta -\tfrac 1m\big)-R(-\vartheta - \tfrac 1m)\Big),
\end{eqnarray*}
where we have used the elementary identity $z^m\partial^m = \vartheta(\vartheta - 1)\cdots(\vartheta-m+1)$ to pass from the second equality to the third. This calculation shows that the operators $\FT(\partial^mL)$ and $[m]^\ast K$ are equal up to a sign, and hence that there is an isomorphism of $\sD_{\IA^1}$-modules as claimed.
\end{par}
\begin{par}
Pochhammer's theorem \cite[Prop.\,2.8]{BH} states that the dimension of the space of vanishing cycles of $A_0$ at its non-zero singularity is one-dimensional. Since the map $z\to z^m$ is \'etale away from zero, the vanishing cycles of $\FT(M_1)$ at any point $z \neq 0$ are isomorphic to the vanishing cycles of $A_0$ at the point $z^m$, hence the equality
\begin{equation}
\sing(\FT(M_1)) = \sing([m]^\ast A_0) = c[0] + \sum_{k=0}^{m-1}[m \cdot \rho \cdot e^{2\pi i k/m}]    
\end{equation}
for some integer $c$. Besides, the kernel and the cokernel of the adjunction map $M_1 \to  j_\ast j^\ast M_1$ are Dirac modules supported at $0$. After Fourier transform, this yields an exact sequence
$$0 \to \cO_{\IA^1}^a \to \FT(M_1) \to \FT(j_\ast j^\ast M_1)\to \cO_{\IA^1}^b \to 0 $$
for some integers $a$ and $b$. Theorem \ref{Thm:PropertiesOfGModules} shows that $\FT(j_\ast j^\ast M_1)$ is of type $G$, so that~$j^\ast M_1$ is of type $E$. Moreover, the equality 
\[
\sing(\FT(M_1)) = \sing(\FT(j_\ast j^\ast M_1)) = \div(j^\ast M_1)
\] follows from this exact sequence. Next, there is a short exact sequence 
\[
0 \to \cO_{\IG_m}^m \to j^\ast M_1 \to M \to 0
\] of $\sD_{\IG_m}$\nobreakdash-modules, which shows that $M$ is of type $E$ as claimed, and that its divisor is equal to~$\div(M) = \div(j^\ast M_1)-m[0]$. Finally, the formula for $\div(M)$ follows from the equality~\hbox{$p=c-m$}, which can be deduced from \hbox{$\deg(\sing(\FT(j_\ast M))) = \dim M = q$}. 
\end{par}
\end{proof}

\begin{defn}
We denote by $\bH$ the tannakian subcategory of $\bE$ generated by all $E$\nobreakdash-hypergeometric $\sD_{\IG_m}$-modules as in Definition \ref{Defn:E-hypergeometricmodule}, and by $\cH$ the subalgebra of $\cE$ generated by all solutions of $E$\nobreakdash-hypergeometric operators.
\end{defn}

\begin{para}%
The category $\bH$ contains the Kummer modules $K(a)$ for $a\in \IQ$, as well as the module~$\sD_{\IG_m}/\sD_{\IG_m}\vartheta^2$ annihilating the logarithm function, and hence the algebra $\cH$ contains~$z^a$ and $\log(z)$. Indeed, these functions are solutions of the hypergeometric operators
$$\Hyp(t-a, t^2-at) = (\vartheta-z)(\vartheta-a) \qqet \Hyp(t^2,t^3) = (\vartheta-z)\vartheta^2$$
respectively. The fact \eqref{Eqn:AdjHyp} that the adjoint of a hypergeometric operator is again hypergeometric implies that the dual of an $E$-hypergeometric module is again an $E$\nobreakdash-hypergeometric module. Each object of $\bH$ can thus be written as a subquotient of some tensor construction on the given generators without involving duals. The subcategory $\bH\subseteq \bE$ and the subalgebra~$\cH \subseteq \cE$ determine each other, in the sense that an object $M$ of $\bE$ belongs to $\bH$ if and only if $M$ admits a basis of solutions in $\cH$, and conversely, $\cH$ can be retrieved from $\bH$ as 
\begin{equation}\label{Eqn:PresentationOfAlgebraH}
\cH = \{s(m)\tq M\in \bH,\,m\in M,\, s\in \Hom_\sD(M,\cE)\}.  
\end{equation}
In particular, the algebra $\cH$ contains all polynomial expressions in hypergeometric $E$\nobreakdash-functions, and our ultimate goal is to produce an $E$\nobreakdash-function that does not belong to $\cH$. Lemma \ref{Lem:NoOrAllSolutionsInH} states that an irreducible object $M\in \bE$ belongs to $\bH$ as soon as just one of the entries of a fundamental matrix of solutions of $M$ belongs to $\cH$. 
\end{para}

\begin{prop}\label{Pro:SemisimpleObjectsOfH}
The tannakian subcategory of $\bH$ generated by Kummer modules $K(a)$ and by $E$-hypergeometric modules $H^E(P,Q) = [m]^\ast H(P,Q)$ such that the hypergeometric module $H(P,Q)$ is simple and not Kummer induced contains every semisimple object of $\bH$.
\end{prop}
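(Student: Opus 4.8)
The plan is to let $\bH''$ denote the tannakian subcategory of $\bH$ generated by the Kummer modules $K(a)$ and by those $H^E(P,Q)$ with $H(P,Q)$ simple and not Kummer induced, and to prove the stronger assertion that \emph{every composition factor of every object of $\bH$ lies in $\bH''$}; since $\bH''$ is closed under direct sums and every semisimple object is a sum of its composition factors, this yields the proposition. As recalled before the statement, every object of $\bH$ is a subquotient of a tensor construction in the generators, with no duals required. Filtering the tensor factors, the composition factors of such a construction are subquotients of tensor products of composition factors of the individual $E$-hypergeometric modules; as $\bH''$ is stable under tensor products and subquotients, it therefore suffices to show that every composition factor of a single $E$-hypergeometric module $H^E(P,Q)$ belongs to $\bH''$.

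I would prove this claim by induction on $q=\deg Q$. The case $q=1$ is immediate: then $H(P,Q)$ has rank one, hence is simple and cannot be Kummer induced, so $H^E(P,Q)$ is itself a generator of $\bH''$. For the inductive step, recall $H^E(P,Q)=[m]^\ast H(P,Q)$ with $m=q-p$, and that $[m]^\ast$ is exact, being pullback along the \'etale cover $z\mapsto z^m$; hence the composition factors of $H^E(P,Q)$ are precisely those of the modules $[m]^\ast S$, where $S$ runs over the composition factors of $H(P,Q)$. By Katz's structure theory of hypergeometric $\sD$-modules \cite{Katz}, each such $S$ is either a Kummer module $K(a)$ or an irreducible hypergeometric module $H(P'',Q'')$; that $H(P,Q)$ has slope $1/m$ at infinity and that a subquotient can only be singular at $0$ and $\infty$ force $q''-p''=m$ and $q''<q$ in the latter case.

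It remains to treat three cases. If $H(P,Q)$ is reducible, then for each factor $S$ one has either $[m]^\ast K(a)=K(ma)$, a Kummer module, or $[m]^\ast H(P'',Q'')=H^E(P'',Q'')$, an $E$-hypergeometric module with $\deg Q''<q$, whose composition factors lie in $\bH''$ by induction. If $H(P,Q)$ is simple and not Kummer induced, then $H^E(P,Q)$ is by definition a generator of $\bH''$, so all its subquotients lie in $\bH''$. Finally, suppose $H(P,Q)$ is simple and Kummer induced, say $H(P,Q)\cong[d]_\ast N$ with $d\geq 2$; comparing ranks and the slope at infinity shows that $N$ is again hypergeometric, of type $(p/d,q/d)$, and in particular that $d\mid m$. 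Forming the fibre product of the covers $[m]$ and $[d]$ and applying proper base change yields $[m]^\ast[d]_\ast N\cong\bigoplus_{\zeta^d=1}[m/d]^\ast\sigma_\zeta^\ast N$, where $\sigma_\zeta\colon\IG_m\to\IG_m$ is multiplication by the root of unity $\zeta$. Since $\sigma_\zeta^\ast N$ is hypergeometric of type $(p/d,q/d)$ and $m/d=(q/d)-(p/d)$, each summand is an $E$-hypergeometric module with $\deg Q=q/d<q$, so induction again places all its composition factors in $\bH''$, completing the argument.

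The main obstacle I expect lies in the last case: one must pin down the precise form of Kummer induced simple hypergeometric modules—the divisibilities $d\mid p$ and $d\mid q$ and the fact that $N$ is hypergeometric of type $(p/d,q/d)$—and then carry out the base-change computation for $[m]^\ast[d]_\ast N$ while keeping track of the multiplicative translations $\sigma_\zeta$ by $d$-th roots of unity. It is the slope bookkeeping at infinity that guarantees $d\mid m$, and hence that the residual pullback exponent $m/d$ matches the difference $(q/d)-(p/d)$, ensuring that the summands are \emph{genuine} $E$-hypergeometric modules of strictly smaller $q$ and that the induction terminates.
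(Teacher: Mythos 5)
Your proposal is correct and follows essentially the same route as the paper: reduce to the composition factors of a single $E$-hypergeometric module via the tensor filtration argument, invoke Katz's structure theory to see that the composition factors of $H(P,Q)$ are Kummer modules together with simple hypergeometric modules of the same slope at infinity, and resolve the Kummer-induced case through the base-change decomposition $[d]^\ast[d]_\ast N \cong \bigoplus_{\zeta^d=1}[\zeta z]^\ast N$. The only organizational difference is your induction on $q$: the paper avoids it by using the sharper forms of the same citations, namely \cite[Cor.\,3.7.5.2]{Katz}, which gives the semisimplification (one simple hypergeometric factor of type $(p-r,q-r)$ plus Kummer modules) in a single step, and the choice of $d$ maximal in the Kummer induction formula, which makes $H(P_1,Q_1)$ not Kummer induced directly rather than by a further appeal to the inductive hypothesis.
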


\begin{proof}
Let $M\in \bH$ be a simple object. By definition of $\bH$, the object $M$ is a subquotient of a tensor construction made from $E$-hypergeometric modules. We need to establish $A$ as a subquotient of a tensor construction made from Kummer modules and $E$-hypergeometric modules of the given special type. To do so, we can without loss of generality suppose that $M$ is a subquotient of some $E$-hypergeometric module $H^E(P,Q)=[m]^\ast H(P,Q)$ of type $(p,q)$. By \cite[Cor.\,3.7.5.2]{Katz}, the semisimplification of $H(P,Q)$ is given by
$$H(P,Q)^{\mathrm{ss}} \simeq H(P_0,Q_0) \oplus \bigoplus_{i=1}^r K(a_i)$$
for some $r\geq 0$ and $a_i\in \IQ$, where $H(P_0,Q_0)$ is simple of type $(p-r,q-r)$. Since $M$ is simple, it is either one of the Kummer modules $[m]^\ast K(a_i) = K(ma_i)$, in which case we are done, or a subquotient of $H^E(P_0,Q_0) = [m]^\ast H(P_0,Q_0)$. We can thus assume from now on that $H(P,Q)$ is simple. If $H(P,Q)$ is Kummer induced, let $d\geq 2$ be the largest integer such that $H(P,Q)$ is in the image of $[d]_\ast$. By Katz's Kummer induction formula~\hbox{\cite[(3.5.6.1)]{Katz}}, the module $H(P,Q)$ is then of type $(p_1d,q_1d)$, and there is an isomorphism
$$H(P,Q) \cong [d]_\ast H(P_1,Q_1),$$
where $H(P_1,Q_1)$ is a hypergeometric $\sD$-module that is not Kummer induced. Since $H(P,Q)$ is simple, $H(P_1,Q_1)$ is simple too. Set $m=m_1d$ with $m_1=q_1-p_1$. There is an isomorphism
$$H^E(P,Q) = [m_1]^\ast[d]^\ast [d]_\ast H(P_1,Q_1) = \bigoplus_{\zeta^d=1}[m_1]^\ast H(\zeta P_1,Q_1) = \bigoplus_{\zeta^d=1} H^E(\zeta P_1,Q_1),$$
on noting the identity $[\zeta z]^\ast H(P_1,Q_1) = H(\zeta P_1,Q_1)$. Since $z\mapsto \zeta z$ is invertible, the hypergeometric modules $H(\zeta P_1,Q_1)$ are all simple, none of them is Kummer induced, and~$A$ is a subquotient of one of the associated $E$-hypergeometric modules.
\end{proof}

\vspace{14mm}
\section{\texorpdfstring{Galois theory of hypergeometric $\sD$-modules}{Galois theory of hypergeometric D-modules}}

\begin{par}
The Galois theory of hypergeometric differential equations is largely understood thanks to the work of Katz \cite{Katz}. In this section, we explain how this can be used to find constraints on the singularities of the Fourier transforms of certain $\sD$-modules in the category $\bH$. 
\end{par}

\begin{thm}[Katz]\label{Thm:KatzClassification}
Let $0 \leq p <q$ be integers, and set $m=q-p$. Consider an $E$\nobreakdash-hypergeometric $\sD$\nobreakdash-module $H^E(P,Q)=[m]^\ast H(P,Q)$ of type $(p, q)$ such that $H(P,Q)$ is simple and not Kummer-induced. Let $G\subseteq \GL_q$ denote its differential Galois group. Then $G$ is reductive, and its determinant $\det(G)\subseteq \IG_m$ is finite if and only if $q>p+1$. The derived group of the connected component $G^\circ$ of $G$ satisfies
$$G^\circ = \begin{cases}
G^{\circ, \mathrm{der}} & \mbox{ if $\det(G)$ is finite}, \\
\IG_m \cdot G^{\circ, \mathrm{der}} & \mbox{ if $\det(G) = \IG_m$.}
\end{cases}$$
Up to conjugation, the possibilities for $G^{\circ, \mathrm{der}} \subseteq \SL_q$ are the following: 
\begin{enumerate}
    \item If $p-q$ is odd, then $G^{\circ, \mathrm{der}} = \SL_q$.
    \item If $p-q$ is even, then $G^{\circ, \mathrm{der}}$ is $\SL_q$, $\SO_q$ or, if $q$ is even, $\Sp_q$.
    \item If $(p,q) = (1,7)$, then $G^{\circ, \mathrm{der}}$ can as well be the exceptional group $G_2$ inside $\SL_7$ via its standard representation of dimension $7$.
    \item If $(p,q) = (2,8)$, then $G^{\circ, \mathrm{der}}$ can as well be $\mathrm{Spin}_7$ inside $\SL_8$ via its standard representation of dimension $8$, or $\PSL_3$ as the image of $\SL_3$ in its adjoint representation, or~$(\SL_2)^3$ in the tensor product of the standard representations of each factor.
    \item If $(p,q) = (3,9)$, then $G^{\circ, \mathrm{der}}$ can as well be the image of $\SL_3\times \SL_3$ in the tensor product of the standard representations of each factor.
\end{enumerate}
\end{thm}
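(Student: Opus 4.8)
The plan is to reduce the whole statement to the Galois group of the bare hypergeometric module $H = H(P,Q)$ and then quote Katz's classification. First I would observe that passing from $H$ to $H^E = [m]^\ast H$ changes nothing that matters. The cover $[m]\colon \IG_m \to \IG_m$ is finite étale and Galois with group $\mu_m$, so a Picard--Vessiot extension of $H^E$ is obtained from one of $H$ by base change along the cyclic extension $\overline\IQ(z^{1/m})/\overline\IQ(z)$: the solutions of $[m]^\ast H$ over $\overline\IQ(z^{1/m})$ are exactly the solutions of $H$ re-expressed in the variable $z^{1/m}$. Restriction therefore realises the differential Galois group $G$ of $H^E$ as a closed subgroup of the differential Galois group $G_H$ of $H$, normal with cyclic quotient of order dividing $m$. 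A finite-index subgroup shares its identity component, so $G^\circ = G_H^\circ$ and in particular $G^{\circ,\mathrm{der}} = G_H^{\circ,\mathrm{der}}$. Since $H$ is simple, its generic fibre is an irreducible $\overline\IQ(z)[\partial]$-module, so $G_H$ is reductive; hence $G_H^\circ$, and therefore $G$, is reductive as well.

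For the determinant I would simply read the answer off the divisor. By Theorem \ref{Thm:EHypergeometricOperators} one has $\div(M) = p[0] + \sum_{k=0}^{m-1}[m\rho\, e^{2\pi i k/m}]$, and the relation $\div(\det M) = [\ev(\div M)]$ recorded in \ref{Para:FibreFunctor} gives $\div(\det M) = \big[\,m\rho\sum_{k=0}^{m-1}e^{2\pi i k/m}\,\big]$. The geometric sum vanishes precisely when $m\geq 2$ and equals $\rho\neq 0$ when $m=1$. Thus $\det(M)$ is a Kummer module $K(a)$, with finite Galois group, exactly when $m = q-p\geq 2$, i.e. $q>p+1$, whereas for $q=p+1$ it carries the exponential factor $E(\rho)$, whose Galois group is $\IG_m$. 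This establishes that $\det(G)$ is finite if and only if $q>p+1$.

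To express $G^\circ$ through $G^{\circ,\mathrm{der}}$ I would use that for a reductive group $G^\circ = Z^\circ\cdot G^{\circ,\mathrm{der}}$, with $Z^\circ$ the connected centre. Under the hypotheses that $H$ is simple and not Kummer-induced, Katz shows that $G_H^\circ$ already acts irreducibly on the $q$-dimensional space -- the failure of this is precisely what Kummer induction detects -- so by Schur's lemma $Z^\circ$ is a subtorus of the scalars and is hence either trivial or all of $\IG_m$. The determinant distinguishes the two cases: if $Z^\circ = \IG_m$ then the $q$-th power map forces $\det(G)=\IG_m$, while $Z^\circ = 1$ gives $G^\circ = G^{\circ,\mathrm{der}}\subseteq \SL_q$ and finite determinant. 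Combined with the previous paragraph, this is exactly the displayed dichotomy $G^\circ = G^{\circ,\mathrm{der}}$ resp. $G^\circ = \IG_m\cdot G^{\circ,\mathrm{der}}$.

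Finally, the list of possibilities for $G^{\circ,\mathrm{der}}\subseteq \SL_q$ is Katz's classification of the geometric monodromy groups of irreducible, non Kummer-induced hypergeometric modules \cite{Katz}: the parity of $p-q$ governs whether a symmetric or alternating autoduality can occur, forcing $\SO_q$ or, for even $q$, $\Sp_q$, as opposed to $\SL_q$; and the sporadic groups $G_2$, $\mathrm{Spin}_7$, $\PSL_3$, $(\SL_2)^3$, and the image of $\SL_3\times\SL_3$ arise exactly in the low ranks $q\in\{7,8,9\}$. The step I would treat most carefully -- and the main obstacle -- is the translation of Katz's results into the present setting: matching the convention in which $P$ and $Q$ are interchanged relative to \eqref{Eqn:DefHypPQOperator}, and checking that the differential Galois group of the hypergeometric $\sD$-module has the same derived identity component as the monodromy group Katz computes. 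Any discrepancy between the two comes only from the formal monodromy and Stokes data at the irregular point, which contribute to $G$ inside its connected centre and therefore leave $G^{\circ,\mathrm{der}}$ untouched.
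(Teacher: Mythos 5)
Your route is fundamentally the paper's: reduce to the hypergeometric module $H(P,Q)$ itself by noting that the differential Galois group of the Kummer pullback $[m]^\ast H(P,Q)$ is a closed finite-index subgroup of that of $H(P,Q)$ --- the paper cites \cite[Prop.\,1.4.5]{KatzGalois} here, and your direct Picard--Vessiot argument via the compositum with $\overline\IQ(z^{1/m})$ is a correct substitute --- so the two groups share their identity component, and then quote Katz. You do depart from the paper in one pleasant way: instead of reading the determinant criterion and the dichotomy $G^\circ = G^{\circ,\mathrm{der}}$ versus $G^\circ=\IG_m\cdot G^{\circ,\mathrm{der}}$ off Katz's theorem, you re-derive the former from the divisor formula of Theorem~\ref{Thm:EHypergeometricOperators} together with the classification of rank-one objects of $\Conn_0(\IG_m)$, and the latter from Lie-irreducibility (which for simple, non-Kummer-induced hypergeometrics is again a theorem of Katz) plus Schur's lemma. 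Both derivations are sound and stay inside the paper's own machinery; note only that in the determinant step you need the Kummer part $K(a)$ of $\det(M)$ to have \emph{rational} exponent $a$ (true, since $P$ and $Q$ have rational roots), for otherwise ``Kummer module'' would not imply ``finite Galois group''.

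Two caveats on the final step, where the real content sits. First, the paper is deliberately careful that Katz's classification theorem \cite[Th.\,3.6]{Katz} by itself lists \emph{two more} possibilities for $G^{\circ,\mathrm{der}}$ in the case $(p,q)=(2,8)$, which are excluded only by the separate result \cite[Prop.\,4.0.1]{Katz}; a blanket appeal to ``Katz's classification'' producing exactly the list in the statement is too quick. Second, your closing claim --- that any discrepancy between the differential Galois group and a monodromy group ``comes only from the formal monodromy and Stokes data at the irregular point, which contribute to $G$ inside its connected centre'' --- is false. Stokes operators are unipotent and in general highly non-central: already for the Bessel module, of type $(0,2)$, the Zariski closure of the topological monodromy is a one-dimensional unipotent group, whereas the differential Galois group is all of $\SL_2$. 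This error happens to be harmless here, because the worry it was meant to dispel is vacuous: Katz's results in \cite{Katz} are stated for differential Galois groups of the hypergeometric $\sD$-modules themselves, so no monodromy-to-Galois bridge is required. But had such a bridge been needed, your argument for it would not supply one.
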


\begin{proof}
The Galois group of $H^E(P,Q)$ is a subgroup of finite index of the differential Galois group of $H(P,Q)$, as is the case for any pullback of a vector bundle with connection by an étale cover according to \cite[Prop.\,1.4.5]{KatzGalois}. It thus suffices to prove the theorem when~$G$ is the differential Galois group of the simple and non-Kummer induced hypergeometric $\sD$\nobreakdash-module~$H(P,Q)$ instead. The result is then the combination of \cite[Th.\,3.6]{Katz}, which in addition to all the above statements lists two more possibilities for $G^{\circ, \mathrm{der}}$ that could a priori occur for $(p, q)=(2, 8)$, and~\cite[Prop.\,4.0.1]{Katz}, which says that those do not actually occur. 
\end{proof}

\begin{para}
To get a hold on the Galois groups of more general objects of $\bH$, we use a version of Goursat's lemma that is valid in any tannakian category. Let $\bT$ be a neutral tannakian category with unit object $\II$ over some field of characteristic zero, equipped with a fixed fibre functor, and let $A$ and $B$ be objects of $\bT$ with Galois groups $G_A$ and $G_B$. The Galois group~$G_{A \oplus B}$ of~\hbox{$A\oplus B$} is then a subgroup of the product $G_A \times G_B$ with the property that the projection maps \hbox{$p_A\colon G_{A \oplus B}\to G_A$} and $p_B\colon G_{A \oplus B}\to G_B$ are surjective, and similarly for the Lie algebras $\fg_{A \oplus B} \subseteq \fg_A\times \fg_B$. The object $A$ is contained in the tannakian category generated by~$B$ if and only if the map $p_B\colon G_{A \oplus B}\to G_B$ is an isomorphism, in which case we say that~\emph{$A$ is generated by $B$}. Since we are in characteristic zero, $A\otimes A^\vee = \underline{\End}(A)$  canonically splits as
$$\underline{\End}(A) = \underline{\End}^0(A) \oplus \II,$$
where $\underline{\End}^0(A) \subseteq \underline{\End}(A)$ is the kernel of the evaluation map $A\otimes A^\vee \to \II$, and $\II$ sits inside the endomorphisms as the image of the counit map $\II \to A\otimes A^\vee$.
\end{para}

\begin{defn}\label{Def:Lie-generated}
Let $A$ and $B$ be objects of a tannakian category $\bT$. Write $\fg_B$ and~$\fg_{A\oplus B}$ for the Lie algebra of the Galois group of $B$ and $A\oplus B$ respectively. We say that $A$ is \emph{Lie-generated} by $B$ if the projection map $\fg_{A\oplus B}\to\fg_B$ is an isomorphism. 
\end{defn}

\begin{para}
Here are two illustrations of this definition. First, consider an arbitrary object $B$ and an object $C$ with finite Galois group. The Galois group of $B \oplus (B\otimes C)$ is a finite cover of both the Galois group of $B$ and the Galois group of $B\otimes C$. It follows that an object $A$ is Lie-generated by $B$ if and only if it is Lie-generated by $B\otimes C$. This applies in particular to the case where $\bT$ is the category of connections on $\IG_m$ and $C$ a Kummer module. Second, consider objects $B$ and $C$, and suppose that $\det(C)$ has a finite Galois group. Then, the Galois group of $B\oplus C$ is a finite cover of the Galois group of $B\otimes C$, and hence $A$ is Lie-generated by $B\oplus C$ if and only if $A$ is Lie-generated by $B\otimes C$.
\end{para}

\begin{lem}\label{Lem:LieGeneration}
Let $\bT$ be a tannakian category generated by a family of objects $\cB$. Let $A$ be an object of $\bT$ such that the Lie algebra of the Galois group of $A$ is simple and non\nobreakdash-commutative. Then $A$ is Lie-generated by a single object of the family $\cB$.
\end{lem}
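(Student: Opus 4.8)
The plan is to reduce the problem to a finite subdirect product of Lie algebras and then extract a single factor by exploiting simplicity through a commutator argument.

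First I would replace the family $\cB$ by a finite subfamily. Since $\bT$ is generated by $\cB$, the object $A$ lies in the tannakian subcategory generated by finitely many objects $B_1,\dots,B_n$ of $\cB$. Writing $B=B_1\oplus\cdots\oplus B_n$ and $\fg=\fg_B$ for the Lie algebra of its Galois group, each of $A$ and the $B_i$ lies in the category generated by $B$, so their Galois groups are quotients of $G_B$ and I obtain surjective Lie algebra homomorphisms $\fg\to\fg_A$ and $\pi_i\colon\fg\to\fg_{B_i}$. Set $\fk=\ker(\fg\to\fg_A)$ and $\fk_i=\ker\pi_i$. Goursat's lemma in $\bT$ gives an embedding $G_B\hookrightarrow G_{B_1}\times\cdots\times G_{B_n}$ whose projections are the $\pi_i$, so at the level of Lie algebras $\bigcap_i\fk_i=0$.

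Next I would translate Lie-generation into a statement about these ideals. The Galois group of $A\oplus B_i$ is the image of $G_B$ in $G_A\times G_{B_i}$, so $\fg_{A\oplus B_i}=\fg/(\fk\cap\fk_i)$ and the projection $\fg_{A\oplus B_i}\to\fg_{B_i}$ is the canonical map $\fg/(\fk\cap\fk_i)\to\fg/\fk_i$. This map is an isomorphism precisely when $\fk_i\subseteq\fk$. Hence $A$ is Lie-generated by $B_i$ if and only if $\fk_i\subseteq\fk$, and the lemma reduces to the purely Lie-theoretic claim: given ideals $\fk_1,\dots,\fk_n$ of $\fg$ with $\bigcap_i\fk_i=0$ and an ideal $\fk$ with $\fg/\fk$ simple and non-commutative, some $\fk_i$ is contained in $\fk$.

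I expect this last step to be the crux, and I would handle it by a minimality argument. Choose a subset $S\subseteq\{1,\dots,n\}$, minimal for inclusion, such that $\bigcap_{i\in S}\fk_i\subseteq\fk$; this is possible since $S=\{1,\dots,n\}$ qualifies. If $|S|\geq2$, pick $i_0\in S$ and set $\fa=\fk_{i_0}$ and $\fb=\bigcap_{i\in S\setminus\{i_0\}}\fk_i$. By minimality of $S$ neither $\fa$ nor $\fb$ lies in $\fk$, while $\fa\cap\fb=\bigcap_{i\in S}\fk_i\subseteq\fk$. Since $\fa,\fb$ are ideals, $[\fa,\fb]\subseteq\fa\cap\fb\subseteq\fk$; passing to $\fg/\fk$ and using that the images of $\fa$ and $\fb$ are nonzero ideals of the simple algebra $\fg/\fk$, hence equal to all of it, this forces $[\fg/\fk,\fg/\fk]=0$, contradicting non-commutativity. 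Therefore $|S|=1$, and the corresponding $\fk_i\subseteq\fk$, so $A$ is Lie-generated by $B_i$. The genuine difficulty is precisely this commutator step: it is where both hypotheses on $\fg_A$ are indispensable, since if $\fg_A$ were a torus the commutator argument would collapse, and if $\fg_A$ were a product of simple factors it could genuinely be spread across two of the $B_i$ and fail to be Lie-generated by any single member of $\cB$.
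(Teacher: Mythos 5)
Your proof is correct and follows essentially the same route as the paper's: reduce to a finite sum, set up the kernels of the projections from $\fg_B$, use simplicity of $\fg_A$ to force the image of each kernel to be zero or everything, and derive a contradiction with non-commutativity from the fact that the relevant ideals commute modulo their intersection. The only difference is organizational — you phrase Lie-generation as the containment $\fk_i\subseteq\fk$ and run the minimality argument over subsets of indices, whereas the paper reduces to two summands $B_1\oplus B_2$ and observes that the two kernels commute inside $\fg_1\times\fg_2$ — but the underlying mechanism is identical.
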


\begin{proof}
\begin{par}
The object $A$ is isomorphic to a subquotient of some tensor construction of objects of the family $\cB$. As any such tensor construction involves only finitely many objects, we can suppose without loss of generality that $\cB$ consists of finitely many objects. In particular, $A$ is Lie-generated by a finite sum of objects of $\cB$, say by $B = B_1\oplus\cdots\oplus B_n$, where~$n \geq 1$ is chosen minimal for this property. If $n=1$, we are done. If $n\geq 2$, it is enough to show that $A$ is Lie-generated by $B_1$ or by $B_2\oplus\cdots\oplus B_n$, so we can assume $n=2$.
\end{par}

\begin{par}
Writing $\fg_1, \fg_2, \fg_A, \fg_B$ for the Lie algebras of the Galois groups of $B_1, B_2, A,$ and $B = B_1\oplus B_2$ respectively, there are canonical morphisms of Lie algebras 
$$\begin{diagram}
\setlength{\dgARROWLENGTH}{6mm}
\node{\fg_B}\arrow{s,l}{p_A}\arrow[2]{e,t}{\:\:\subseteq\:\:}\node[2]{\!\! \fg_1 \times \fg_2\!\!\!\!} \arrow{s,r}{\mathrm{proj}_i}\\
\node{\fg_A}\node[2]{\fg_i.}
\end{diagram}$$
Let $\mathfrak k_i \trianglelefteq \fg_B$ denote the kernel of the canonical projection $p_i\colon \fg_B\to\fg_i$. Since the image of~$\mathfrak k_i$ in~$\fg_A$ is an ideal, it is either zero or the whole $\fg_A$. In case one of the $\mathfrak k_i$ maps to zero in $\fg_A$, the projection~$p_A$ factors over $\fg_i$, and hence $A$ is Lie-generated by $B_i$. Suppose then that both ideals $\mathfrak k_1$ and $\mathfrak k_2$ surject onto $\fg_A$. In the product $\fg_1\times \fg_2$, the ideals $\{0\}\times \fg_2$ and~$\fg_1\times \{0\}$ commute. Thus, $\mathfrak k_1$ and $\mathfrak k_2$ commute in $\fg_B$, and hence $\fg_A$ is commutative.
\end{par}
\end{proof}

\begin{lem}\label{Lem:End0Isomorphism}
Let $A$ and $B$ be three-dimensional objects of a tannakian category. Suppose that the Galois group of $A$ is isomorphic to $\SL_3$ and that $A$ is Lie-generated by $B$. Then, there exists an isomorphism $\underline{\End}^0(A)  \cong \underline{\End}^0(B)$.
\end{lem}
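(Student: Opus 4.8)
The plan is to reduce the statement to the Galois groups and to recognise both $\underline{\End}^0(A)$ and $\underline{\End}^0(B)$ as incarnations of the adjoint representation of $G=G_{A\oplus B}$. Write $\fg=\Lie(G)$ and let $p_A\colon\fg\to\fg_A$ and $p_B\colon\fg\to\fg_B$ be the two projections; both are surjective, and $p_B$ is an isomorphism by the Lie-generation hypothesis. The Lie algebra $\fg$ with its adjoint $G$-action defines an object of the tannakian category generated by $A\oplus B$, which I denote $\underline{\fg}$; similarly $\underline{\fg_A}$ and $\underline{\fg_B}$ carry their adjoint actions. Since the projections are $G$-equivariant for these actions, they are morphisms of objects, and the goal becomes to produce a surjection of objects $\underline{\End}^0(B)\twoheadrightarrow\underline{\End}^0(A)$, after which the fact that a surjection between objects of the same finite dimension is an isomorphism finishes the argument.

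First I would identify $\underline{\End}^0(A)$ with $\underline{\fg_A}$. Because $G_A\cong\SL_3$ acts on the three-dimensional fibre of $A$, its Lie algebra $\fg_A$ is semisimple of dimension $8$, so $\fg_A=[\fg_A,\fg_A]$ is contained in, and therefore by dimension equal to, the trace-zero endomorphisms of that fibre; as this space is the fibre of $\underline{\End}^0(A)$ and the adjoint action matches conjugation, one gets $\underline{\End}^0(A)\cong\underline{\fg_A}$ as objects. Composing the surjection $p_A$ with the isomorphism $\underline{\fg_B}\xrightarrow{\sim}\underline{\fg}$ coming from $p_B^{-1}$ then yields a surjection of objects $\pi\colon\underline{\fg_B}\twoheadrightarrow\underline{\End}^0(A)$.

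Next I would pin down $\fg_B$ via the faithful three-dimensional representation $B$. Regarding $\fg_B$ inside $\fgl_3$, the fibre of $\underline{\End}(B)$, and using that $\fg_B\cong\fg$ surjects onto the eight-dimensional $\fg_A\cong\sl_3$, one has $8\le\dim\fg_B\le 9$; the same derived-algebra argument as above shows that $\fg_B$ is either the trace-zero part $\sl_3$, which is the fibre of $\underline{\End}^0(B)$, or the whole $\fgl_3$. In both cases $\underline{\End}^0(B)$ is a subobject of $\underline{\fg_B}$ whose quotient is trivial: it is $0$ in the first case and $\II$ in the second, through the canonical splitting $\underline{\End}(B)=\underline{\End}^0(B)\oplus\II$.

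Finally I would restrict $\pi$ to the subobject $\underline{\End}^0(B)\subseteq\underline{\fg_B}$. The adjoint representation $\underline{\End}^0(A)$ of $\SL_3$ is irreducible and nontrivial, hence admits no nonzero morphism from a trivial object; therefore $\pi$ cannot vanish on $\underline{\End}^0(B)$ (otherwise it would factor through the trivial quotient $\underline{\fg_B}/\underline{\End}^0(B)$), and the restriction $\underline{\End}^0(B)\to\underline{\End}^0(A)$ is surjective. Both objects being eight-dimensional, this surjection is the desired isomorphism. The main obstacle I anticipate is precisely this last passage: the hypothesis controls only Lie algebras, so an isomorphism of $\fg$-modules would not by itself guarantee an isomorphism of objects when $G$ is disconnected. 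Routing everything through the canonical $G$-equivariant projection $\fg\to\fg_A$ and the irreducibility of the adjoint representation is what upgrades the Lie-algebra coincidence to an honest isomorphism in the tannakian category.
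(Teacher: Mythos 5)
Your proof is correct, and it takes a genuinely different route from the paper's. The paper argues at the group level: it applies Goursat's lemma to the image of $G$ in $G_A\times G_B$, uses Lie-generation to see that the resulting normal subgroup $N_A\subseteq G_A\cong\SL_3$ is finite, deduces that $G_B$ is isomorphic to $\GL_3$ or to $\mu_n\SL_3$, observes that the images of $G_A$ and $G_B$ in the representations $\underline{\End}^0(A)$ and $\underline{\End}^0(B)$ are both $\PSL_3$ and that $G$ maps onto the graph of an isomorphism between them, and finally invokes the fact that $\PSL_3$ has a unique irreducible eight-dimensional representation to produce an abstract isomorphism. You argue instead at the Lie-algebra level and construct the isomorphism as an explicit morphism: you identify $\underline{\End}^0(A)$ with the adjoint object $\underline{\fg_A}$ via $\fg_A=[\fg_A,\fg_A]\subseteq\sl(V_A)$ and a dimension count, transport the surjection $p_A$ along $p_B^{-1}$ to get $\pi\colon\underline{\fg_B}\to\underline{\End}^0(A)$, sandwich $\sl(V_B)\subseteq\fg_B\subseteq\fgl(V_B)$ by the derived-algebra argument, and use irreducibility and nontriviality of the adjoint representation of $\SL_3$ to see that $\pi$ remains surjective on the subobject $\underline{\End}^0(B)$, hence is an isomorphism there by equality of dimensions. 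All the individual steps are sound: $p_A$ and $p_B$ are morphisms in the category of representations of $G$ by naturality of the adjoint action, so $p_B^{-1}$ and the restriction of $\pi$ are as well, and, as you point out, this is exactly what makes the argument insensitive to the component group of $G$. In terms of trade-offs, your route dispenses with Goursat's lemma, with the classification of $G_B$, and with the uniqueness of the eight-dimensional irreducible representation of $\PSL_3$ (only irreducibility of the adjoint representation is needed), and it yields a canonical rather than an abstract isomorphism; the paper's route gives in passing the structural fact that $G_B$ is $\GL_3$ or $\mu_n\SL_3$, of which your argument retains only the Lie-algebra shadow $\fg_B\in\{\sl_3,\fgl_3\}$.
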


\begin{proof}
Let $G$ be the Galois group of $A \oplus B$. For our purposes, we can assume that the ambient tannakian category is the category of representations of $G$. The objects $A$ and $B$ correspond to three-dimensional representations
$$\rho_A\colon G \to \GL(V_A) \simeq \GL_3 \qqet \rho_B\colon G\to \GL(V_B) \simeq \GL_3,$$
whose images $G_A = \rho_A(G)$ and $G_B = \rho_B(G)$ are the Galois groups of $A$ and $B$. By Goursat's Lemma \cite[p.\,75]{Lang}, there exist normal subgroups $N_A \subseteq G_A$ and $N_B\subseteq G_B$ and an isomorphism
$$\alpha\colon G_A/N_A \stackrel{\sim}{\longrightarrow} G_B/N_B$$
such that the image of the injective morphism $G\to G_A \times G_B$ consists of those pairs $(g_A,g_B)$ satisfying $\alpha(g_AN_A) = g_BN_B$. Since $A$ is Lie-generated by $B$, the subgroup $N_A$ is finite, and since $G_A$ is isomorphic to $\SL_3$, the group $G_A/N_A$ is isomorphic to either $\SL_3$ or $\PSL_3$. The group $G_B \subseteq \GL_3$ is therefore isomorphic to $\GL_3$ or to $\mu_n\SL_3$ for some integer $n\geq 1$. Either way, the image $\overline G_A$ of $G_A$ in the representation $\underline{\End}^0(V_A)$ is isomorphic to $\PSL_3$, and so is the image $\overline G_B$ of $G_B$ in the representation $\underline{\End}^0(V_B)$. The image $\overline G$ of $G$ in the representation 
$$\underline{\End}^0(V_A) \oplus \underline{\End}^0(V_B)$$
is the graph of the isomorphism $\overline \alpha\colon \overline G_A \to \overline G_B$ induced by $\alpha$. In particular, $\overline G$ is isomorphic to $\PSL_3$ as well. The representations $\underline{\End}^0(V_A)$ and $\underline{\End}^0(V_B)$ of $\overline G$ are both irreducible and of dimension $8$. Since $\PSL_3$ has up to isomorphism only one such representation, we conclude that there exists an isomorphism of $\overline G$-representations, and therefore of~$G$\nobreakdash-representations~\hbox{$\underline{\End}^0(V_A) \cong \underline{\End}^0(V_B)$} as claimed.
\end{proof}

\begin{thm}\label{Thm:LieGenerationByHypergeometric}
Let $M$ be an object of $\bE$ whose Galois group is isomorphic to $\SL_3$. If~$M$ is Lie-generated by objects of $\bH$, then~$M$ is Lie\nobreakdash-generated by a single $E$-hypergeometric $\sD$-module of type $(p,3)$ with Galois group equal to $\SL_3$ if $p \in \{0,1\}$, and to $\GL_3$ if $p=2$.
\end{thm}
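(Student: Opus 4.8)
The plan is to reduce, via the two Goursat-type lemmas, to a single three-dimensional hypergeometric module and then read off its Galois group from Katz's classification. Since the Galois group of $M$ is $\SL_3$, its Lie algebra $\fg_M \cong \sl_3$ is simple and non-commutative, and in particular $M$ is semisimple. First I would bring in Proposition \ref{Pro:SemisimpleObjectsOfH}: because Lie-generation is unchanged under passing to semisimplifications (a surjection onto the semisimple algebra $\sl_3$ cannot detect the unipotent part of a Galois group, so $M$ remains Lie-generated by the semisimplifications of the objects that Lie-generate it), and because those semisimplifications lie in the tannakian category generated by the Kummer modules $K(a)$ and the $E$-hypergeometric modules $H^E(P,Q)$ with $H(P,Q)$ simple and not Kummer induced, the object $M$ is Lie-generated by finitely many objects of this family $\cB$. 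Lemma \ref{Lem:LieGeneration} then collapses the sum to a single object $B\in\cB$. A Kummer module has abelian Galois group and cannot Lie-generate an object with $\fg_M\cong\sl_3$, so $B=H^E(P,Q)$ is $E$-hypergeometric of some type $(p,q)$ with $H(P,Q)$ simple and not Kummer induced.

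Next I would extract the constraint on $(p,q)$. Composing the inverse of the isomorphism $\fg_{M\oplus B}\xrightarrow{\sim}\fg_B$ with the projection onto $\fg_M$ produces a surjection $\fg_B\twoheadrightarrow\fg_M\cong\sl_3$. As $\fg_B$ is reductive by Theorem \ref{Thm:KatzClassification}, this surjection factors through $\fg_B^{\mathrm{der}}$, exhibiting $\sl_3$ as a simple quotient, hence a direct factor, of $\fg_B^{\mathrm{der}}$. Running through the list in Theorem \ref{Thm:KatzClassification}, none of $\mathfrak{so}_q$, $\mathfrak{sp}_q$, $\mathfrak{g}_2$, $\mathrm{Lie}(\mathrm{Spin}_7)$ or $(\sl_2)^3$ has a factor of type $A_2$, whereas $\sl_q$ does exactly when $q=3$. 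The only surviving possibilities besides $q=3$ are therefore the two exceptional families: type $(2,8)$ with $G_B^{\circ,\mathrm{der}}=\PSL_3$, where $V_B$ is the adjoint representation $\underline{\End}^0(V)$ of a three-dimensional $V$, and type $(3,9)$ with $G_B^{\circ,\mathrm{der}}$ the image of $\SL_3\times\SL_3$, where $V_B$ is an external tensor product $V_1\boxtimes V_2$ of standard three-dimensional representations.

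The heart of the argument, and the step I expect to be the main obstacle, is disposing of these two exceptional cases by producing a genuinely three-dimensional hypergeometric module. Here I would use Katz's explicit description of these exceptional families: the adjoint shape $\underline{\End}^0(V)$ in the $(2,8)$ case and the tensor shape $V_1\boxtimes V_2$ in the $(3,9)$ case reflect that $B$ is built by a tensor construction out of type-$(p,3)$ $E$-hypergeometric modules lying in $\bH$ itself. Consequently $B$ is Lie-generated by such modules, and since Lie-generation is transitive, so is $M$; a further application of Lemma \ref{Lem:LieGeneration}, using once more that $\fg_M\cong\sl_3$ is simple, collapses the resulting finite sum to a single type-$(p,3)$ $E$-hypergeometric module $B'$ that Lie-generates $M$. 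The delicate part is to make this structural reduction precise and, crucially, to verify that the smaller hypergeometric factors genuinely belong to $\bH$ and not merely to its Lie-tannakian closure.

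Finally, with $B'$ three-dimensional and $M$ Lie-generated by $B'$, I would apply Lemma \ref{Lem:End0Isomorphism} to obtain an isomorphism $\underline{\End}^0(M)\cong\underline{\End}^0(B')$. Since $\underline{\End}^0(M)$ is the eight-dimensional irreducible adjoint representation of $\SL_3$, the derived group of $B'$ must be $\sl_3$ acting through $\SL_3$ — the alternative $\mathfrak{so}_3$ would render $\underline{\End}^0(B')$ reducible — so that $B'$ has type $(p,3)$ with $\fg_{B'}^{\mathrm{der}}=\sl_3$. The Galois group is then read off from Theorem \ref{Thm:KatzClassification}: for $q=3$ the determinant is finite exactly when $q>p+1$, that is for $p\in\{0,1\}$, whence $G^\circ=G^{\circ,\mathrm{der}}=\SL_3$, while for $p=2$ one has $\det(G)=\IG_m$ and hence $G^\circ=\IG_m\cdot\SL_3=\GL_3$. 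This yields precisely the dichotomy asserted in the theorem.
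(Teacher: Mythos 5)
Your overall strategy coincides with the paper's: semisimplicity plus Proposition \ref{Pro:SemisimpleObjectsOfH} and Lemma \ref{Lem:LieGeneration} to reduce to a single $E$-hypergeometric module $N$ with $H(P,Q)$ simple and not Kummer induced, then Katz's classification and the surjection $\fg_N \twoheadrightarrow \sl_3$ to narrow the possibilities to $q=3$, $(p,q)=(2,8)$ with $\PSL_3$, and $(p,q)=(3,9)$ with the image of $\SL_3\times\SL_3$. However, at what you yourself call the heart of the argument there is a genuine gap: you assert that the ``adjoint shape'' and ``tensor shape'' of the exceptional Galois groups ``reflect that $B$ is built by a tensor construction out of type-$(p,3)$ $E$-hypergeometric modules,'' but knowing that the \emph{group} acts through an adjoint or tensor-product representation does not by itself imply that the \emph{module} decomposes accordingly, let alone that the three-dimensional pieces are again hypergeometric. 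This is precisely the content of the non-trivial results of Katz that the paper invokes: for type $(2,8)$, after a Kummer twist placing the group inside $\mathrm{O}_8$, Katz's Lemma 4.3.2 produces a hypergeometric module $H(P_0,Q_0)$ of type $(0,3)$ with Galois group $\SL_3$ and an isomorphism $[2]^\ast H(P,Q) \cong \underline{\End}^0(H(P_0,Q_0))$; for type $(3,9)$, Katz's Lemmas 4.6.1 and 4.6.3 give $[2]^\ast H(P,Q) \cong H(P_1,Q_1)\otimes H(P_2,Q_2)$ with both factors hypergeometric of type $(0,3)$. Pulling back by $z\mapsto z^3$ then expresses $N$ in terms of honest $E$-hypergeometric modules $H^E(P_i,Q_i)$, which settles the worry you raise about the factors belonging to $\bH$ rather than to some Lie-theoretic closure. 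Without these specific structural results (or a proof of them), the exceptional cases are not disposed of.

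Two smaller points in your final step. First, you invoke Lemma \ref{Lem:End0Isomorphism}, which requires $M$ to be three-dimensional; the theorem's hypothesis is only that the Galois group of $M$ is abstractly isomorphic to $\SL_3$. This detour is unnecessary: the surjection $\fg_{B'}\twoheadrightarrow \sl_3$ already forces $\fg_{B'}^{\mathrm{der}}=\sl_3$, since the image of a derived algebra is the derived algebra of the image and $\mathfrak{so}_3$ is too small. Second, Katz's theorem only pins down $G^\circ$, so for $p\in\{0,1\}$ you obtain $G^\circ=\SL_3$ but not yet $G=\SL_3$ as the statement requires; the paper closes this by twisting $N$ with a Kummer module, which kills the finite determinant part and does not affect Lie-generation.
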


\begin{proof}
\begin{par}
The object $M$ is semisimple. By Proposition \ref{Pro:SemisimpleObjectsOfH} and Lemma \ref{Lem:LieGeneration} applied to the category of all semisimple objects of $\bH$, the module $M$ is Lie-generated by some $E$-hypergeometric module $N=[m]^\ast H(P,Q)$ such that $H(P,Q)$ is simple and not Kummer induced. The Lie algebra of the Galois group of $N$ surjects onto the Lie algebra $\sl_3$. By Theorem \ref{Thm:KatzClassification}, $N$ is thus an~$E$\nobreakdash-hypergeometric module of type $(p,q)$ with Galois group $G\subseteq \GL_q$ of the following kind:
\begin{enumerate}
    \item $q = 3$ and $G^\circ = \SL_3$ if $p \in \{0,1\}$, or $G=\GL_3$ if $p=2$.
    \item $(p,q)=(2,8)$ and $G^\circ = \PSL_3$ as the image of $\SL_3$ in its adjoint representation.
    \item $(p,q)=(3,9)$ and $G^\circ$ is the image of $\SL_3\times \SL_3$ in the tensor product of the standard representations of each factor.
\end{enumerate}
In the first case, if $p \in \{0,1\}$ we can twist $N$ by a Kummer module to ensure that $G$ is equal to~$\SL_3$. It remains to explain why the exceptional cases of type $(2,8)$ or $(3,9)$ are not needed. 
\end{par}

\begin{par}
{\bf Case $(2,8)$.} Suppose that $N = [6]^\ast H(P,Q)$ is of type $(2,8)$ and that the connected component of the identity in its Galois group is $\PSL_3$. After replacing $H(P,Q)$ with an appropriate Kummer twist, we may assume $G$ is contained in $\mathrm{O}_8$. By \cite[Lem.\,4.3.2]{Katz}, there exists then a hypergeometric module $H(P_0,Q_0)$ of type $(0,3)$ with Galois group $\SL_3$ and an isomorphism
$$[2]^\ast H(P,Q) \cong \underline{\End}^0(H(P_0,Q_0)).$$ 
Pulling back by $z\mapsto z^3$, it follows that $N$ belongs to the tannakian category generated by the $E$\nobreakdash-hypergeometric module $[3]^\ast H(P_0,Q_0)$, and hence $M$ is Lie-generated by $H^E(P_0,Q_0)$. 
\end{par}

\begin{par}
{\bf Case $(3,9)$.} Suppose that $N = [6]^\ast H(P,Q)$ is of type $(3,9)$ and that the connected component of the identity in its Galois group is the image of $\SL_3\times \SL_3$ in $\GL_9$. By \cite[Lem.\,4.6.1 and~4.6.3]{Katz}, after replacing $H(P,Q)$ with a suitable Kummer twist, there exist hypergeometric modules $H(P_1,Q_1)$ and $H(P_2,Q_2)$ of type $(0, 3)$ and Galois group $\SL_3$ and an isomorphism 
$$[2]^\ast H(P,Q) \cong H(P_1,Q_1) \otimes H(P_2,Q_2),$$
which yields $N \cong H^E(P_1,Q_1) \otimes H^E(P_2,Q_2)$ after pullback by $z\mapsto z^3$. Since $M$ cannot be Lie-generated by a Kummer module, it is Lie-generated by $H^E(P_1,Q_1)$ or by $H^E(P_2,Q_2)$ thanks to Lemma~\ref{Lem:LieGeneration}. This concludes the proof. 
\end{par}
\end{proof}

\begin{lem}\label{Lem:Factiod}
Let $s_1,s_2,s_3$ be complex numbers whose differences $s_i-s_j$, for~\hbox{$i\neq j$}, are the vertices of a regular hexagon. Then $s_1,s_2,s_3$ are the vertices of an equilateral triangle.
\end{lem}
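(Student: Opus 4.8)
The plan is to exploit the fact that the six differences come in antipodal pairs. Writing $a = s_1 - s_2$, $b = s_2 - s_3$, and $c = s_3 - s_1$, the six numbers $s_i - s_j$ with $i \neq j$ are precisely $a, -a, b, -b, c, -c$. In particular, the set of six vertices of the hexagon is invariant under the map $x \mapsto -x$; that is, it is centrally symmetric about the origin.

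The key geometric input is then that a regular hexagon is centrally symmetric about its own centre. A finite subset of $\IC$ cannot be centrally symmetric about two distinct points, for the composition of the two point reflections is a non-trivial translation, under which the set would have to be invariant and hence infinite. Since the vertex set of our hexagon is finite and symmetric both about its centre and about the origin, these two points coincide: the hexagon is centred at the origin. All six vertices of a regular hexagon lie at a common distance from its centre, so $\abs{a} = \abs{b} = \abs{c}$. As these are the side lengths $\abs{s_1 - s_2}$, $\abs{s_2 - s_3}$, $\abs{s_3 - s_1}$ of the triangle with vertices $s_1, s_2, s_3$, that triangle is equilateral.

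I do not expect any serious obstacle here: once the antipodal pairing is noticed, the argument is purely elementary plane geometry. The only point requiring a line of justification is that the centre of a regular hexagon is uniquely determined by its vertex set, which follows from the translation argument above; everything else reduces to the defining property that the vertices of a regular hexagon are equidistant from its centre.
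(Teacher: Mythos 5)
Your proof is correct and follows essentially the same route as the paper: identify the centre of the hexagon with the origin, then deduce $\abs{s_1-s_2}=\abs{s_2-s_3}=\abs{s_3-s_1}$ from the equidistance of the vertices. The only (minor) difference is how the centre is pinned down --- the paper observes that the six differences sum to zero, so the centroid, and hence the centre, is $0$, whereas you use the antipodal symmetry $x\mapsto -x$ of the vertex set together with the uniqueness of a centre of symmetry for a finite set; both are valid one-line justifications of the same step.
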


\begin{proof}
Since the elements of $S = \{s_i-s_j\tq i\neq j\}$ sum to zero, if they are the vertices of a regular hexagon, then this hexagon is centered at $0 \in \IC$, and each vertex is equidistant from~$0$. Then $\abs{s_1-s_2} = \abs{s_1-s_3} = \abs{s_2-s_3}$
holds, so~$s_1,s_2,s_3$ indeed form an equilateral triangle. 
\end{proof}

\begin{thm}\label{Thm:symconst}
Let $M$ be a three-dimensional object of $\bE$ whose Galois group contains~$\SL_3$. If $M$ is Lie-generated by objects of $\bH$, then the singularities of $\FT(j_\ast M)$ are either collinear or form an equilateral triangle.
\end{thm}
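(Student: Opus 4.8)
The plan is to combine the classification of Theorem~\ref{Thm:LieGenerationByHypergeometric} with the isomorphism of Lemma~\ref{Lem:End0Isomorphism}, and then to make the relevant divisors explicit through Theorem~\ref{Thm:EHypergeometricOperators}. First I would reduce to the case in which the Galois group of $M$ is \emph{equal} to $\SL_3$. As $M$ is three-dimensional, its determinant $\det M$ is a rank-one object of $\bE$, hence isomorphic to $E(\sigma)\otimes K(\alpha)$ for some $\sigma\in\overline\IQ$ and $\alpha\in\IQ$. The twist $L = E(-\sigma/3)\otimes K(-\alpha/3)$ lies in $\bH$: the Kummer factor does by the remarks following the definition of $\bH$, and $E(-\sigma/3)\in\bH$, being the $E$-hypergeometric module of type $(0,1)$ attached to $(P,Q)=(-\sigma/3,\,t)$ (or the unit object when $\sigma=0$). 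Using \eqref{Eqn:ExponentialProduct} and $K(a)\otimes K(b)=K(a+b)$ one gets $L^{\otimes 3}\cong(\det M)^\vee$, so $M' = M\otimes L$ has trivial determinant and Galois group contained in $\SL_3$; as tensoring with a rank-one object does not change the derived Lie algebra, $\fg_{M'}=\sl_3$ and the Galois group of $M'$ is exactly $\SL_3$. Since $L\in\bH$, the module $M'$ is still Lie-generated by objects of $\bH$, and since $\div(L)=[-\sigma/3]$ the singularities of $\FT(j_\ast M')$ are those of $\FT(j_\ast M)$ translated by $-\sigma/3$. Collinearity and the equilateral-triangle property being translation-invariant, it is enough to treat $M'$, and I assume henceforth that $M$ has Galois group $\SL_3$.

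Next I would apply Theorem~\ref{Thm:LieGenerationByHypergeometric} to obtain that $M$ is Lie-generated by a single $E$-hypergeometric module $N = H^E(P,Q)$ of type $(p,3)$ with $p\in\{0,1,2\}$. As $M$ and $N$ are both three-dimensional and the Galois group of $M$ is $\SL_3$, Lemma~\ref{Lem:End0Isomorphism} gives $\underline{\End}^0(M)\cong\underline{\End}^0(N)$, hence $\div(\underline{\End}^0(M))=\div(\underline{\End}^0(N))$. Writing $\div(M)=[s_1]+[s_2]+[s_3]$, so that $s_1,s_2,s_3$ are by \eqref{Eqn:SingAndDiv} precisely the singularities of $\FT(j_\ast M)$, the divisor calculus of \ref{Para:FibreFunctor} yields
\[
\div(\underline{\End}^0(M)) = \div(M)\ast\inv(\div(M)) - [0] = 2[0] + \sum_{i\neq j}[s_i-s_j],
\]
together with the analogous identity for $N$ in terms of its singularities $t_1,t_2,t_3$. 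Comparing the two sides reduces everything to the equality of multisets of pairwise differences $\{s_i-s_j\}_{i\neq j}=\{t_i-t_j\}_{i\neq j}$.

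It then remains to read off the $t_i$ from Theorem~\ref{Thm:EHypergeometricOperators}, which gives $\div(N)=p[0]+\sum_{k=0}^{2-p}[(3-p)\rho\,e^{2\pi i k/(3-p)}]$. For $p=0$ the singularities $t_k=3\rho\,e^{2\pi i k/3}$ are the vertices of an equilateral triangle, and a direct computation shows that their six pairwise differences form a regular hexagon centered at the origin; by the comparison above the same is true of $\{s_i-s_j\}_{i\neq j}$, and Lemma~\ref{Lem:Factiod} then forces $s_1,s_2,s_3$ to be the vertices of an equilateral triangle. For $p=1$ one has $\{t_1,t_2,t_3\}=\{0,2\rho,-2\rho\}$ and for $p=2$ one has $\{t_1,t_2,t_3\}=\{0,0,\rho\}$; in either case every difference $t_i-t_j$ is a real multiple of $\rho$, so all the $s_i-s_j$ lie on the line $\IR\rho$ through the origin, which forces $s_1,s_2,s_3$ to be collinear. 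This exhausts the three cases and proves the statement.

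The main obstacle, apart from invoking the classification, is the bookkeeping in the reduction step: one has to trivialize the determinant of $M$ by a rank-one twist that remains inside $\bH$ and to verify that Lie-generation by $\bH$ survives the twist, all while tracking the induced translation of singularities. A secondary point that requires care is the role of coincident singularities in the divisor identity; comparing the coefficient of $[0]$ on both sides shows that in the hexagon case $p=0$ the three singularities are automatically distinct, whereas in the collinear cases $p\in\{1,2\}$ coincidences are harmless.
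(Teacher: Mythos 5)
Your proposal is correct and follows essentially the same path as the paper's proof: reduction to Galois group exactly $\SL_3$ by a rank-one determinant twist, Theorem~\ref{Thm:LieGenerationByHypergeometric} to produce a single $E$-hypergeometric generator $N$ of type $(p,3)$, Lemma~\ref{Lem:End0Isomorphism} giving $\underline{\End}^0(M)\cong\underline{\End}^0(N)$, the resulting equality of difference divisors, and the case analysis via Theorem~\ref{Thm:EHypergeometricOperators} and Lemma~\ref{Lem:Factiod}. The only deviations are harmless refinements: in the cases $p\in\{1,2\}$ you conclude collinearity directly from all differences $s_i-s_j$ lying on the line $\IR\rho$ (the paper instead pins down the exact configuration using $s_1+s_2+s_3=0$), and your explicit verification that the twisting object lies in $\bH$, so that Lie-generation by $\bH$ survives the twist, makes a point the paper leaves implicit.
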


\begin{proof}
\begin{par}
We assume, as we may, that the Galois group of $M$ is equal to $\SL_3$. Indeed, $\det(M)$ takes the form $E(s) \otimes K(a)$ for some $s \in \IC$ and $a \in \IQ$ by the classification of rank-one objects of~$\bE$. Twisting $M$ by $E(-s/3) \otimes K(-a/3)$ yields an object with trivial determinant and only changes by translation the singularities of the Fourier transform. Set $\div(M)=[s_1] + [s_2] + [s_3]$. Since~$\det(M)$ is trivial, the relation $s_1+s_2+s_3=0$ holds. By Theorem \ref{Thm:LieGenerationByHypergeometric}, the object $M$ is Lie-generated by an $E$-hypergeometric $\sD$-module $N=H^E(P,Q)$ of type $(p, 3)$ with Galois group~$\SL_3$ if $p \in \{0, 1\}$, and $\GL_3$ if $p=2$. By Lemma \ref{Lem:End0Isomorphism}, there is an isomorphism
$$\underline{\End}^0(M) \cong \underline{\End}^0(N).$$
Writing $\div(N)=[t_1]+[t_2] + [t_3]$, the existence of this isomorphism and the formulas for divisors given in \ref{Para:FibreFunctor} show that the equality
\begin{equation}\label{Eqn:SingDivisorOfEquality}
2[0] + \sum_{i\neq j}[s_i-s_j] = 2[0] + \sum_{i\neq j}[t_i-t_j]
\end{equation}
holds in $\IZ[\overline\IQ]$. In the remainder of the proof, we discuss the three values of $p$ separately.
\end{par}

\begin{par}{\bf Case $p=0$.} In that case, $\div(N) = [\lambda] + [\lambda \zeta^2] + [\lambda \zeta^4]$ holds for some $\lambda \in \overline\IQ^\times$ and $\zeta=e^{2\pi i/6}$ by Theorem \ref{Thm:EHypergeometricOperators}. We deduce the equality
$$\sum_{i\neq j}[s_i-s_j] = \sum_{k=0}^5[\sqrt {-3} \cdot \lambda \cdot \zeta^k]$$
from \eqref{Eqn:SingDivisorOfEquality}. Since the right-hand side divisor is supported on a regular hexagon, $s_1,s_2,s_3$ lie on an equilateral triangle by Lemma \ref{Lem:Factiod}. 
\end{par}

\begin{par}{\bf Case $p=1$.} In that case, $\div(N)= [-\lambda] + [0] + [\lambda]$ holds for some $\lambda \in \overline\IQ^\times$, again by Theorem~\ref{Thm:EHypergeometricOperators}. The 
equality \eqref{Eqn:SingDivisorOfEquality} then reads
$$\sum_{i\neq j}[s_i-s_j] = [-2\lambda] + 2[-\lambda] + 2[\lambda] + [2\lambda],$$
which is only possible if $[s_1]+[s_2]+[s_3] = [-\lambda] + [0] + [\lambda]$. Thus, $s_1,s_2,s_3$ lie on the line $\IR \lambda$.
\end{par}

\begin{par}{\bf Case $p=2$.} In that case, $N$ is a hypergeometric module with divisor $\div(N)= 2[0]+[\lambda]$ for some $\lambda \in \overline\IQ^\times$. The 
equality \eqref{Eqn:SingDivisorOfEquality} reads
$$\sum_{i\neq j}[s_i-s_j] =  2[-\lambda] + 2[0] + 2[\lambda],$$
which is only possible if $[s_1]+[s_2]+[s_3] = 2[-\mu] + [2\mu]$ for $\mu = \lambda/3$ or $\mu = -\lambda/3$. In either case, $s_1,s_2,s_3$ lie on the line $\IR \lambda$.
\end{par}
\end{proof}

\vspace{14mm}
\section{\texorpdfstring{A non-hypergeometric $E$-function}{A non-hypergeometric E-function}}

\begin{par}
Armed with the symmetry constraint from Theorem \ref{Thm:symconst}, it remains to write down a concrete example of an $E$\nobreakdash-function that is not a polynomial expression in hypergeometric $E$-functions. For this, we shall consider a family of $\sD_{\IG_m}$-modules attached to polynomials of degree four. 
\end{par}

\vspace{4mm}
\begin{prop}\label{Pro:DModuleForPolynomial}
Let $n \geq 1$ be an integer, and set $\zeta=\exp(2\pi i/ (n+1))$. Let $f \in \overline\IQ[t]$~be a monic polynomial of degree $n+1$, which we view as a morphism $f\colon\IA^1 \to \IA^1$. Consider the $\sD$\nobreakdash-modules $A$ on $\IA^1$ and $M$ on $\IG_m$ defined as
$$A = f_\ast\cO_{\IA^1}/\cO_{\IA^1} \qqet  M=j^\ast\FT^{-1}(A).$$
The $\sD_{\IG_m}$-module $M$ is of type $E$. With respect to appropriate bases, the $n$-by-$n$ matrix of functions $F=(F_{a, b})_{1\leq a,b\leq n}$ defined for $ Re(z)>0$ by the convergent integrals
\[
F_{a, b}(z)= \int_0^\infty \exp(-zf(x))x^{b-1}dx - \zeta^{ab}\int_0^\infty \exp(-zf(\zeta^ax))x^{b-1}dx
\]
is a fundamental matrix of solutions of $M$. In particular, all the functions $F_{a, b}$ belong to $\cE$. \end{prop}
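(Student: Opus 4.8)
The plan is to exploit the description of $M$ as the Fourier transform of an exponential-integral module and to run everything through the Laplace-transform dictionary of Proposition \ref{Pro:WishListC}, so that the analytic periods $F_{a,b}$ are realised as inverse Laplace transforms of the explicit algebraic solutions of $A$. First I would record the formal input. Applying Example \ref{Exmp:ExponentialPeriodModules} to the polynomial $f$ of degree $n+1$, the module $A = f_\ast\cO_{\IA^1}/\cO_{\IA^1}$ lies in $\bG_0$ and $M = j^\ast\FT^{-1}(A)$ lies in $\bE$; this already settles the first claim that $M$ is of type $E$, and by Proposition \ref{Pro:WishListC} the space $\Hom_\sD(M,\cE)$ has dimension $n = \mathrm{rank}(M)$. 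I would then normalise the integrals: the substitution $y=\zeta^a x$ turns $\zeta^{ab}\int_0^\infty e^{-zf(\zeta^a x)}x^{b-1}dx$ into $\int_{\zeta^a[0,\infty)}e^{-zf(y)}y^{b-1}dy$, so that
\[
F_{a,b}(z) = \int_{\gamma_a} e^{-zf(y)}\,y^{b-1}\,dy, \qquad \gamma_a = [0,\infty) - \zeta^a[0,\infty).
\]
Since $f$ is monic of degree $n+1$, the $n+1$ rays $\zeta^a[0,\infty)$ run to infinity inside the $n+1$ sectors where $\Re(zf)\to+\infty$ for $\Re(z)>0$; hence each $\gamma_a$ is a rapid-decay cycle, the integrals converge and are holomorphic on $\Re(z)>0$, and it remains to show that each row $(F_{a,b})_b$ is the evaluation of a solution of $M$ in $\cE$ on a fixed basis of the generic fibre, and that $\det(F)\neq 0$.

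The heart of the argument is the change of variables $w=f(y)$. Along each ray a branch $y=u(w)$ of $f^{-1}$ is algebraic over $\overline\IQ(w)$, and $y^{b-1}dy = u(w)^{b-1}u'(w)\,dw$ is an algebraic function of $w$, hence an element of $\cG$; in fact these are exactly the combinations of branches that give the solutions of $A$ in $\cG$ described in Example \ref{Exmp:ExponentialPeriodModules}. The integral $F_{a,b}(z)=\int_{f(\gamma_a)} e^{-zw}\,u(w)^{b-1}u'(w)\,dw$ is then a Hankel-type contour integral computing the inverse Laplace transform $\sL^{-1}$ of that solution. Under the bijection $\Hom_\sD(M,\cE)\cong\Hom_\sD(A,\cG)$ of Proposition \ref{Pro:WishListC} (induced by $\sL$ and $\sL^{-1}$, see Proposition \ref{Pro:LaplaceSummary}), the cycle $\gamma_a$ thus determines a genuine solution $s_a\in\Hom_\sD(M,\cE)$, while the forms $\omega_b=y^{b-1}dy$ — equivalently the classes of $u,\dots,u^n$ in the generic fibre of $A$ — furnish the basis $m_1,\dots,m_n$ against which we evaluate, so that $F_{a,b}=s_a(m_b)$. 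Equivalently, one checks directly that $(F_{a,b})_b$ is horizontal for the connection on $M$: differentiating under the integral sign and integrating by parts expresses the period with integrand $e^{-zf}$ times a higher power of $y$ in terms of lower-degree periods, the boundary contributions vanishing at $0$ and in the convergent directions at infinity, which reproduces the connection matrix.

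For linear independence I would argue that the $n$ cycles $\gamma_1,\dots,\gamma_n$ form a basis of the rapid-decay homology: the $n+1$ rays give $n+1$ thimble classes whose only relation is that their signed sum bounds, so the differences $[0,\infty)-\zeta^a[0,\infty)$ are independent. Dually, $\omega_1,\dots,\omega_n$ represent a basis of the relevant $n$-dimensional de Rham cohomology, since $1,y,\dots,y^{n-1}$ span $\overline\IQ[y]$ modulo the ideal generated by $f'$. As $A$ has vanishing de Rham cohomology ($A\in\bG_0$ by Proposition \ref{Pro:PhaseStationnaire}), the exponential period pairing between these two spaces is perfect, whence $\det(F)\neq 0$. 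A more elementary variant avoids the comparison theorem: the branch-differences giving the solutions of $A$ in $\cG$ are visibly $\IC$-linearly independent, and $\sL^{-1}$ is injective, so the $s_a$ are independent; since $\dim\Hom_\sD(M,\cE)=n$, they form a basis. Either way $F$ is a fundamental matrix of solutions of $M$, and in particular every $F_{a,b}=s_a(m_b)$ lies in $\cE$.

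I expect the main obstacle to be the precise matching hidden in the phrase ``appropriate bases''. Turning the $w=f(y)$ substitution into an honest identification of $F_{a,b}$ with $\sL^{-1}$ of a solution of $A$ requires controlling the Hankel contour $f(\gamma_a)$ and tracking the polynomial ($\IC[z]$) ambiguity inherent in $\sL^{-1}$ — recall that $\sL$ only lands in $\cG_0=\cG/\IC[z]$ — so that the correspondence of Proposition \ref{Pro:WishListC} pins down $s_a$ uniquely and the form $\omega_b$ is correctly matched to the generic-fibre basis element $m_b$. The non-degeneracy of the period matrix is the other delicate point, which I would resolve either through the perfectness of the exponential period pairing or, preferably within the machinery already set up, through the injectivity of the inverse Laplace transform.
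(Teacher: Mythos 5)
Your proposal is workable precisely where it coincides with the paper, namely in the two branches you present as ``equivalently'' and as the primary determinant argument: (i) $M$ is of type $E$ by Example \ref{Exmp:ExponentialPeriodModules} and Proposition \ref{Pro:WishListC}; (ii) the columns of $F$ are solutions because differentiation under the integral sign and integration by parts (with boundary terms vanishing at $0$ and in the decay directions at infinity) show that the integrals $\int_{\gamma_a}e^{-zf(x)}g(x)\,dx$ respect the module structure; (iii) $\det F\neq 0$ by the perfectness of the pairing between rapid decay homology and twisted de Rham cohomology (Bloch--Esnault). This is exactly the paper's proof. What your sketch compresses into ``reproduces the connection matrix'' is the paper's main labour: it derives an explicit presentation of $A=f_\ast\cO_{\IA^1}/\cO_{\IA^1}$ by generators $G(g,z)$, $g\in\overline\IQ[x]$, subject to four relations (linearity, $zG(g)=G(fg)$, $\partial G(gf')=G(g')$, $G(f')=0$), dualizes it under $\FT$, and then checks the integrals against this finite list of relations. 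Without such a presentation, ``horizontality'' is not a well-posed verification, so this step needs to be fleshed out rather than asserted.

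The route you advertise as primary --- realizing $F_{a,b}$ as $\sL^{-1}$ of branch-difference solutions of $A$ via the substitution $w=f(y)$ and Hankel contours --- contains a genuine gap, which you yourself flag: the identity ``$\sL(F_{a,b})=$ value of the branch-difference solution on the corresponding basis element of $A$, modulo $\IC[z]$'' is asserted, not proven, and proving it is a nontrivial contour-deformation/stationary-phase argument that the paper deliberately bypasses. This gap is not cosmetic, because your ``more elementary variant'' for $\det F\neq 0$ rests entirely on it: knowing that $\Hom_\sD(A,\cG)$ has a basis of linearly independent branch-difference solutions and that $\sL$ is a bijection $\cE\to\cG_0$ (Proposition \ref{Pro:LaplaceSummary}) says nothing about the \emph{specific} solutions $s_a$ defined by your integrals unless you know which elements of $\Hom_\sD(A,\cG_0)$ they correspond to --- and that is exactly the unproven identification. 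As stated, the elementary variant is circular, and you must fall back on the period pairing, as the paper does. A further small slip: the forms $\omega_b=y^{b-1}dy$ and the classes of $u^b$ in $A$ are \emph{not} ``equivalent'' bases; the class of $u^b$ equals $G(x^bf',z)$ and corresponds to the form $x^bf'(x)\,dx\equiv \tfrac{b}{z}\,x^{b-1}dx$ in cohomology, so only the basis dual to $G(x^{b-1},z)$ produces the matrix exactly as $(F_{a,b})$.
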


\begin{proof} It was already proved in Example \ref{Exmp:ExponentialPeriodModules} that $M$ is a $\sD_{\IG_m}$-module of type $E$ and that the identity $A=\FT(j_\ast M)$ holds. By the equivalence between conditions (1) and (2) in Proposition \ref{Pro:WishListC}, the module $M$ admits a basis of solutions in $\cE$, and hence a fundamental matrix of solutions with respect to any choice of bases has entries in $\cE$. 

\begin{par}Let $S$ be the set of critical values of $f$. Elements of the $\sD$-module 
\[
f_\ast\cO_{\IA^1} = \overline\IQ[z,u]/(f(u)-z)
\] can be thought of as algebraic functions in the complex variable $z$, defined on a small open disk around a point in $\IC\setminus S$. Concretely, fix $z_0 \in \IC\setminus S$ and choose a root $u_0$ of the polynomial $f(x)-z_0$. By the implicit function theorem, there exists a unique holomorphic function~$u(z)$ defined in an open neighbourhood of $z_0$ satisfying $u(z_0)=u_0$ and $f(u(z))-z=0$ for all~$z\in U$. This function $u$ and its powers are explicitly given by
\[
u^p(z)=\frac{1}{2\pi i}\int_\gamma \frac{x^pf'(x)}{f(x)-z}dx, 
\]
where the contour $\gamma$ is a simple loop around $u_0$ not enclosing any roots of $f(x)-z_0$ other than~$u_0$. For a polynomial $g \in \overline\IQ[x]$, define
\[
G_\gamma(g,z)=\frac{1}{2\pi i}\int_\gamma \frac{g(x)}{f(x)-z}dx
\]
and observe the relations
$$G_\gamma(g',z) = \frac{1}{2\pi i}\int_\gamma \frac{g'(x)}{f(x)-z}dx = \frac{1}{2\pi i}\int_\gamma \frac{g(x)f'(x)}{(f(x)-z)^2}dx = \frac\partial{\partial z} G_\gamma(gf',z)$$
obtained by integration by parts and by differentiating under the integral. We now have two~$\IC[z]$\nobreakdash-bases of $f_\ast\cO_{\IA^1}$ at hand, the first one given by the functions $u^p= G_\gamma(x^pf',z)$, and the second one by the functions $G_\gamma(x^p,z)$ for $p=0,1,\ldots, n-1$. In particular, $G_\gamma(f',z)$ is the constant function with value $1$, which generates the image of the adjunction map $\cO_{\IA^1} \to f_\ast\cO_{\IA^1}$. The~$\sD$\nobreakdash-module~$A$ has therefore the following presentation: it is generated by symbols~$G(g,z)$, one for each\hbox{~$g\in \overline\IQ[x]$}, subject to the following relations:
\begin{enumerate}
\item Linearity: $G(ag+bh,z) = aG(g,z) +bG(h,z)$ for $a,b\in \overline\IQ$ and $g,h\in \overline\IQ[x]$.
\item Multiplication rule: $zG(g,z) = G(fg,z)$ for all $g\in \overline\IQ[x]$.
\item Derivation rule: $\partial G(gf',z)= G(g',z)$ for all $g\in \overline\IQ[x]$.
\item $G(f',z)=0$.
\end{enumerate}
The inverse Fourier transform $\FT^{-1}(A)$ has the following dual presentation, obtained by replacing~$\partial$ with~$z$, and $z$ with $-\partial$ in the above: it is the $\sD$-module generated by symbols~$E(g,z)$, one for each~$g\in \overline\IQ[x]$, modulo the relations:
\begin{enumerate}
\item Linearity: $E(ag+bh,z) = aE(g,z) +bE(h,z)$ for $a,b\in \overline\IQ$ and $g,h\in \overline\IQ[x]$.
\item Derivation rule: $\partial E(g,z) = -E(fg,z)$ for all $g\in \overline\IQ[x]$.
\item Multiplication rule: $z E(gf',z) = E(g',z)$ for all $g\in \overline\IQ[x]$.
\item $E(f',z)=0$.
\end{enumerate}
Let $\cA$ denote the differential algebra of holomorphic functions on the half-plane $\Re(z)>0$. To give a solution $\FT(A)\to \cA$ amounts to specifying, for each generator $E(g,z)$, an element of $\cA$ compatible with relations. To do so, let $\gamma_a\colon \IR\to\IC$ be the path defined by $\gamma_a(t) = -t\zeta^a$ for $t\leq 0$ and $\gamma_a(t)=t$ for $t\geq 0$, and set
\[
E_{\gamma_a}(g,z) = \int_{\gamma_a} e^{-zf(x)}g(x)dx
\]
for polynomials $g\in \overline\IQ[x]$. For $g=x^{b-1}$, this yields the function $F_{a, b}$ in the statement of the proposition. The integral converges, and there is indeed no problem in exchanging differentiation with respect to $z$ and integration, due to the exponential decay of $e^{-zf(\zeta^ax)}$ as~$x\to \infty$, which follows from the assumption that $f$ is monic and $\Re(z)>0$. The expression~$E_{\gamma_a}(g,z)$ is linear in $g$, and the derivation rule is obtained by differentiating under the integral sign. The multiplication rule follows from integration by parts
\[
\int_{\gamma_a} e^{-zf(x)}g'(x)dx = z \int_{\gamma_a} e^{-zf(x)}f'(x)g(x)dx, 
\]
on noting that $e^{-zf(x)}g(x)$ tends to $0$ as $x\to 1\infty$ or $x\to \zeta^a\infty$. Setting $g=1$, we find~$zE_{\gamma_a}(f',z)=0$, and hence
$E_{\gamma_a}(f',z)=0$ as needed. Each column of the matrix $F(z)$ is indeed a solution vector for $M$ with respect to the basis $E(x^p,z)$ for $p=0,1,\ldots,n-1$. That~$F(z)$ is a fundamental matrix of solutions means that its determinant does not vanish. This follows from the fact that, for fixed $z$ with positive real part, the paths $\gamma_1, \dots, \gamma_n$ form a basis of the rapid decay homology $\rH_1^{\mathrm{rd}}(\IA^1, zf)$, the differential forms $dx, xdx, \dots, x^{n-1}dx$ form a basis of the twisted de Rham cohomology $\rH^1_{\dR}(\IA^1, zf)$, and the integration $(\gamma, \omega) \mapsto \int_\gamma e^{-zf} \omega$ induces a perfect pairing between these spaces by a theorem of Bloch--Esnault \cite[Th.\,0.1]{BlochEsnault2}. This special case actually goes back to a 1976 letter from Deligne to Malgrange \cite[p.\,17]{DMR}, and one can even show the equality $\det F(z) = \beta e^{-z\alpha}$, where $\beta$ is some non-zero complex number and~$\alpha$ is the sum of the critical values of~$f$, see \cite[Prop.\,5.4]{BlochEsnault}.
\end{par}
\end{proof}

\begin{lem}\label{Lem:simplicity}
Let $f \in \overline\IQ[t]$~be a polynomial of degree $n+1$. If $f\colon\IA^1\to\IA^1$ has $n$ distinct critical values, then the $\sD_{\IA^1}$-module $A = f_\ast\cO_{\IA^1}/\cO_{\IA^1}$ is simple.
\end{lem}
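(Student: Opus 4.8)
The plan is to analyse $A$ through the monodromy of the finite covering $f\colon\IA^1\to\IA^1$: away from the critical values $A$ will correspond to an irreducible local system, and I will then upgrade this to simplicity of the $\sD$-module. First I would extract the geometric consequence of the hypothesis. Since $f'$ has degree $n$, its zero set has at most $n$ elements, and $f$ maps distinct critical points to distinct critical values; as there are exactly $n$ distinct critical values, the zero set of $f'$ must consist of $n$ \emph{distinct} points $\alpha_1,\dots,\alpha_n$, each therefore a simple root of $f'$, hence a nondegenerate critical point, and $f$ is injective on them. Writing $S=\{c_1,\dots,c_n\}$ with $c_i=f(\alpha_i)$ and $U=\IA^1\setminus S$, the restriction $f\colon f^{-1}(U)\to U$ is an étale cover of degree $n+1$, and near each $c_i$ exactly two sheets come together at the simple branch point $\alpha_i$ while the remaining $n-1$ sheets are unramified. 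Consequently the local monodromy of the covering around each $c_i$ is a single transposition.

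Next I would determine the global monodromy group $G\subseteq\mathfrak{S}_{n+1}$ acting on the $n+1$ sheets. As the source $\IA^1$ is connected, the covering is connected and $G$ is transitive; on the other hand $G$ is generated by the $n$ transpositions attached to small loops around $c_1,\dots,c_n$. Encoding these transpositions as the edges of a graph on the $n+1$ sheets, transitivity of $G$ amounts to connectedness of this graph, and a connected graph on $n+1$ vertices with only $n$ edges is a tree; since the transpositions along the edges of a spanning tree generate the full symmetric group, we conclude $G=\mathfrak{S}_{n+1}$. Hence the rank-$(n+1)$ local system $(f_\ast\cO_{\IA^1})|_U$ is the permutation representation of $\mathfrak{S}_{n+1}$, which splits as the trivial representation — corresponding to the image of the adjunction map $\cO_{\IA^1}\to f_\ast\cO_{\IA^1}$, split off by the trace — plus the standard $n$-dimensional representation, which is irreducible. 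Thus $A|_U$ is an irreducible local system of rank $n$.

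It remains to promote this generic irreducibility to simplicity of $A$ as a $\sD_{\IA^1}$-module, and this is the step I expect to require the most care: I must show that $A$ is the intermediate (intersection) extension of $A|_U$, i.e.\ that it has neither a nonzero sub- nor a nonzero quotient-$\sD$-module supported on $S$. I would deduce this from the finiteness of $f$. Because $f$ is finite one has $R^qf_\ast=0$ for $q>0$, so the perverse sheaf $f_\ast(\cO_{\IA^1}[1])$ — which is semisimple, being the proper pushforward of the intersection complex of the smooth source — has no punctual constituents and is the direct sum of the intermediate extensions of the trivial and of the standard local systems. The trivial summand is exactly $\cO_{\IA^1}$ realised by the adjunction map, and the trace splitting $f_\ast\cO_{\IA^1}=\cO_{\IA^1}\oplus A$ then identifies $A=f_\ast\cO_{\IA^1}/\cO_{\IA^1}$ with the complementary summand, namely the intermediate extension of the irreducible standard local system. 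Being the intermediate extension of an irreducible local system, $A$ is a simple $\sD$-module, which is the assertion. Everything save this final identification is the elementary monodromy computation above; the one genuinely delicate point is ruling out punctual sub- and quotient-modules, and it is precisely the finiteness of $f$ (hence the vanishing of higher direct images) that supplies it.
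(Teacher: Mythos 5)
Your proof is correct, and its overall strategy is the same as the paper's: restrict to the complement $U$ of the set of critical values, show that the monodromy group of the degree-$(n+1)$ covering is the full symmetric group $\mathfrak{S}_{n+1}$ acting through its permutation representation, split off the trivial summand via the adjunction/trace map, note that the standard $n$-dimensional representation is irreducible, and conclude by Riemann--Hilbert. You deviate in two sub-steps, both in instructive ways. First, to see that each local monodromy is a transposition, you count: $\deg f'=n$ and the existence of $n$ distinct critical values force $f'$ to have $n$ distinct simple zeros, exactly one above each critical value, hence each critical value is a simple branch point. The paper instead applies Hurwitz's genus formula to the completed covering $\IP^1\to\IP^1$ (using that $\infty$ is totally ramified because $f$ is a polynomial) and deduces that every local permutation has exactly $n$ cycles, i.e., is a transposition; your argument is more elementary, while the paper's avoids any local analysis at the critical points. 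Second, you are more explicit than the paper about the step you rightly flag as delicate: passing from irreducibility of the local system $A|_U$ to simplicity of the $\sD$-module $A$ on all of $\IA^1$. The paper compresses this into one sentence (``via the Riemann--Hilbert correspondence, this decomposition yields $f_\ast\cO_{\IA^1}=\cO_{\IA^1}\oplus A$, and hence $A$ is simple''), whereas you supply the missing content: since $f$ is finite, $f_\ast$ has no higher direct images, so the perverse pushforward is concentrated in cohomological degree $-1$ and therefore admits no punctual summands; combined with semisimplicity (decomposition theorem for the proper map $f$), this exhibits $A$ as the intermediate extension of an irreducible local system, hence simple. One minor point of phrasing: injectivity of $f$ on the set of critical points is a \emph{consequence} of the counting argument (at most $n$ critical points surjecting onto exactly $n$ critical values), not a premise, so it should be stated after the count rather than before; as written, that clause reads as an unjustified assertion.
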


\begin{proof}
Let $S$ be the set of critical values of $f$, set $U = \IC\setminus S$, and choose a base point~\hbox{$x \in U$}. The monodromy of the \'etale covering of $U$ given by $f$ is generated by $n$ non-trivial permutations~$\tau_1, \tau_2, \ldots, \tau_n$ acting on the set of $n+1$ elements $f^{-1}(x)$, which can moreover be arranged in such a way that the product $\tau_\infty^{-1} = \tau_1\tau_2\cdots\tau_n$ is an $(n+1)$-cycle. Let $1\leq c_i \leq n$ be the number of cycles in the permutation $\tau_i$, and set $c_\infty=1$. Hurwitz's genus formula for $f$, now viewed as a ramified covering $f\colon\IP^1\to\IP^1$ of degree $n+1$, reads
$$2 = 2(n+1) - (n+1-c_\infty) - \sum_{i=1}^n(n+1-c_i),$$
which can only hold if $c_i=n$ for $i=1,2,\ldots,n$. In other words, $\tau_1, \ldots, \tau_n$ are all transpositions. Since a transitive subgroup of a symmetric group generated by transpositions is the whole group, the monodromy representation of the local system $f_\ast \underline \IQ$ on $U$ is the standard representation of the symmetric group on $n+1$ elements, which is well known to split into a trivial factor of dimension $1$ and a simple factor of dimension $n$. Via the Riemann--Hilbert correspondence, this decomposition yields~$f_\ast \cO_{\IA^1} = \cO_{\IA^1} \oplus A$, and hence $A$ is simple. 
\end{proof}

\begin{lem}\label{Lem:ContainsSL3}
Let $f \in \overline\IQ[t]$~be a polynomial of degree four. If the critical values of $f$ are not collinear, then the differential Galois group of the module $M=j^\ast\FT^{-1}( f_\ast\cO_{\IA^1}/\cO_{\IA^1})$ contains $\SL_3$.
\end{lem}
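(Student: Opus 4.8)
The plan is to prove that the differential Galois group $G\subseteq\GL(V)$ of $M$, where $V$ denotes the three-dimensional fibre, satisfies $G^{\circ,\mathrm{der}}=\SL_3$; this is equivalent to $\SL_3\subseteq G$. First I would record that the non-collinearity hypothesis forces the three critical values of $f$ to be pairwise distinct, since any two of them lie on a line. By Lemma \ref{Lem:simplicity} the module $A=f_\ast\cO_{\IA^1}/\cO_{\IA^1}$ is then simple, and since $M=j^\ast\FT^{-1}(A)$ corresponds to $A$ under the equivalence $\RS_0(\IA^1)\simeq\Conn_0(\IG_m)$ of \ref{Par:ConnAndRS}, the module $M$ is simple as well. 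Hence the tautological representation of $G$ on $V$ is irreducible and $G$ is reductive. Writing $\div(M)=\sing(A)=[s_1]+[s_2]+[s_3]$ with the $s_i$ the distinct critical values of $f$, I note that at least two of the $s_i$ are non-zero, so $M$ is genuinely irregular at infinity. The classification of connected reductive subgroups of $\GL_3$ acting irreducibly on $V$ then leaves exactly three possibilities: either $G^\circ$ acts irreducibly, in which case $G^{\circ,\mathrm{der}}$ is $\SL_3$ (standard representation) or $\SO_3$ (the image of $\SL_2$ under $\mathrm{Sym}^2$); or $G^\circ$ acts reducibly, in which case Clifford theory makes $V|_{G^\circ}$ a sum of three characters cyclically permuted by $G/G^\circ$, so that $G^\circ$ is a torus and $M$ is imprimitive. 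It remains to exclude the orthogonal and the imprimitive cases.

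To exclude $G^{\circ,\mathrm{der}}=\SO_3$, I would use that this group preserves a non-degenerate symmetric form on $V$, unique up to scalar; the line it spans is therefore $G$-stable, which yields an isomorphism $M\cong M^\vee\otimes L$ for a rank-one object $L$ of $\bE$. By the classification of rank-one objects of $\Conn_0(\IG_m)$ we have $L\cong E(\sigma)\otimes K(a)$, so $\div(L)=[\sigma]$ for some $\sigma\in\overline\IQ$. Taking divisors and using $\div(M^\vee)=\inv(\div M)$ from \ref{Para:FibreFunctor} gives the equality of multisets $\{s_1,s_2,s_3\}=\{\sigma-s_1,\sigma-s_2,\sigma-s_3\}$, so the three singularities are invariant under the point reflection $x\mapsto\sigma-x$. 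An elementary computation shows that a three-element set carrying such a symmetry either is a single point or has one element equal to the midpoint of the other two; in either case $s_1,s_2,s_3$ are collinear, contradicting the hypothesis.

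To exclude the imprimitive case, I would translate the cyclic permutation of the three weight lines into an index-three subgroup of $G$, corresponding via the tannakian dictionary to the unique connected degree-three cover $[3]\colon\IG_m\to\IG_m$, $z=x^3$, so that $M\cong[3]_\ast N$ for a rank-one module $N$ on the cover. For each $s\in\overline\IQ$ the projection formula yields $E(-s)\otimes M\cong[3]_\ast\bigl([3]^\ast E(-s)\otimes N\bigr)$, and the solution of $[3]^\ast E(-s)\otimes N$ at infinity has slope-three leading exponential $\exp\bigl((\sigma_0-s)x^3\bigr)$, where $\sigma_0$ is fixed by $N$. Since $x^3=z$ is invariant under the deck transformation $x\mapsto e^{2\pi i/3}x$, all three branches of $M_\infty$ share the single slope-one exponential $e^{\sigma_0 z}$; consequently the regular singular part $M_\infty^{(s)}$ of $(M\otimes E(-s))_\infty$ vanishes unless $s=\sigma_0$, so $\Psi_s(M)\neq 0$ for at most one value of $s$ and $\div(M)=3[\sigma_0]$ is supported at a single point. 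This contradicts the distinctness of $s_1,s_2,s_3$, and with both exceptional cases excluded we conclude $G^{\circ,\mathrm{der}}=\SL_3$, whence $\SL_3\subseteq G$. The main obstacle is precisely this imprimitive case: one must argue rigorously that Kummer pushforward collapses all three slope-one exponentials to a single value, which hinges on the invariance of the leading term $x^3=z$ under the deck group and on the fact that an imprimitive simple object of $\Conn_0(\IG_m)$ is genuinely Kummer induced.
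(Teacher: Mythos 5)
Your opening reduction (distinct critical values, simplicity via Lemma \ref{Lem:simplicity}, reductivity of $G$) matches the paper, but from there you take a genuinely different route: a case analysis of irreducible reductive subgroups of $\GL_3$, whereas the paper's entire proof rests on the fact recorded at the beginning of \ref{Para:FibreFunctor}, namely that $G$ contains the torus whose character group is generated by $s_1,s_2,s_3$, which has rank at least two precisely because the critical values are not collinear. You never invoke that torus, and this omission is where your proof has a genuine gap: your trichotomy is incomplete. Clifford theory does give that $V|_{G^\circ}$ is a sum of $G$-conjugate irreducible representations, and that these are characters when $G^\circ$ acts reducibly; it does \emph{not} give that the three characters are distinct. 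If they coincide, $G^\circ$ lies in the scalars and $G$ is a \emph{primitive} subgroup of $\GL_3$ that is finite modulo its centre. Such groups exist and act irreducibly on $\IC^3$: the Hessian group, or $\PSL_2(\IF_7)$ embedded in $\SL_3$ via its three-dimensional representation. They do not contain $\SL_3$, they are not imprimitive (so no index-three subgroup and no Kummer induction is available), and $G^{\circ,\mathrm{der}}$ is trivial rather than $\SO_3$, so neither of your two exclusion arguments touches this case.

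The gap is fixable with tools you already have. Quickest is the paper's own input: by \ref{Para:FibreFunctor}, $G$ contains a torus acting on $V$ with weights $s_1,s_2,s_3$, whose character group has rank $\geq 2$ by non-collinearity, whereas a group with $G^\circ$ central has rank at most one. Alternatively, use the irregularity you mention in passing but never exploit: since the $s_i$ are pairwise distinct, $\div(\underline{\End}^0(M)) = 2[0]+\sum_{i\neq j}[s_i-s_j]$ is not supported at the origin, so $\underline{\End}^0(M)$ is irregular at infinity; but if $G$ were finite modulo scalars, the Galois group of $\underline{\End}^0(M)$ --- the image of $G$ in $\PGL_3$ --- would be finite, forcing $\underline{\End}^0(M)$ to be regular singular, a contradiction. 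With this fourth case disposed of, the rest of your argument is sound and is in fact more explicit than the paper where the two overlap: your $\SO_3$ exclusion (self-duality up to a rank-one twist $E(\sigma)\otimes K(a)$, whence the multiset $\{s_i\}$ is stable under $x\mapsto \sigma-x$, whence a midpoint relation and collinearity) genuinely uses non-collinearity rather than just the rank, and your imprimitive exclusion (Kummer induction $M\cong[3]_\ast N$ plus deck-invariance of the cubic term collapsing $\div(M)$ to a single point, against $\deg\div(M)=3$ and distinctness of the $s_i$) supplies an argument that the paper's one-sentence group-theoretic conclusion leaves implicit.
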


\begin{proof}
The polynomial $f$ must have three distinct critical values in order for them not to be collinear. Together with the equivalence of categories explained in \ref{Par:ConnAndRS}, Lemma \ref{Lem:simplicity} shows that the $\sD$\nobreakdash-module $M\in \Conn_0(\IG_m)$ is simple. Hence, its differential Galois group is a reductive subgroup of~$\GL_3$. By the discussion at the beginning of \ref{Para:FibreFunctor}, this group contains the torus with character group the subgroup of~$\overline\IQ$ generated by the critical values of $f$, which by assumption has rank at least two. Any reductive subgroup of $\GL_3$ of rank $\geq 2$ whose three-dimensional standard representation is simple contains $\SL_3$. 
\end{proof}

\begin{para}\label{sec:ComputationExponentialIntegral}
Let $f$ be a monic polynomial of degree four with algebraic coefficients, say given in the form $f(x) = x^4 - a_3x^3-a_2x^2-a_1x-a_0$. We wish to compute the integral
\[
P(z)=F_{2, 1}(z)=\int_{\IR} e^{-zf(x)}dx,
\]
which defines a holomorphic function on the half\nobreakdash-plane~\hbox{$\Re(z)>0$}. This function can be analytically continued to any simply connected domain in $\IC\setminus\{0\}$, but not across $0$, since it has finite monodromy of order four around this point. In other words, $P(z^4)$ is a meromorphic function with a single pole at $z=0$, as the following computation shows: 
\begin{align*}
P(z^4)&=\int_{\IR} e^{-z^4(x^4 - a_3 x^3- a_2x^2- a_1 x - a_0)}dx \\
&=\frac{1}{z}\int_{z\IR} e^{-s^4}e^{a_3z s^3} e^{a_2z^2s^2} e^{a_1z^3 s} e^{a_0z^4}ds\\
&=\frac{1}{z}\sum_{k_0,k_1,k_2,k_3 \geq 0} \!\!\frac{(a_3z)^{k_3}(a_2z^2)^{k_2}(a_1z^3)^{k_1}(a_0z^4)^{k_0}}{k_3!k_2!k_1!k_0!} \int_{z\IR} e^{-s^4}s^{3k_3+2k_2+k_1}ds\\
&=\sum_{\substack{k_0,k_1,k_2,k_3 \geq 0 \\ k_1+k_3 \text{ even}}} \frac{a_3^{k_3}a_2^{k_2}a_1^{k_1}a_0^{k_0}}{2 k_3!k_2!k_1!k_0!} \Gamma\big(\tfrac{3k_3 + 2k_2 + k_1 +1}{4}\big) z^{k_3+2k_2 + 3k_1 + 4k_0-1}. 
\end{align*}
We made the change of variables $s=zx$, expanded all exponential functions except $e^{-s^4}$, and evaluated the remaining integral in terms of the gamma function. Setting $k=k_0+k_1+k_2+k_3$, we can now reindex the resulting series as 
\begin{align*}
P(z^4) & = \sum_{n=0}^\infty\sum_{\ast=0}\frac{a_3^{k_3}a_2^{k_2}a_1^{k_1}a_0^{k_0}}{2k_3!k_2!k_1!k_0!} \Gamma\big(k-n+\tfrac{1}{4}\big) z^{4n-1}+ \sum_{n=0}^\infty\sum_{\ast=2}\frac{a_3^{k_3}a_2^{k_2}a_1^{k_1}a_0^{k_0}}{2k_3!k_2!k_1!k_0!} \Gamma\big(k-n-\tfrac{1}{4}\big) z^{4n+1}, 
\end{align*}
where the sums labelled with $(\ast=r)$ run over the finitely many integers \hbox{$k_0, k_1, k_2, k_3 \geq 0$} satisfying~$k_3+2k_2+3k_1 + 4k_0-4n= r$. This yields the expression 
\begin{equation}\label{eqn:monodromydecomposition}
P(z)=\frac{1}{2}\Gamma\big(\tfrac{1}{4}\big)z^{-1/4}E_0(z)+ \frac{1}{2}\Gamma\big(\tfrac{-1}{4}\big)z^{1/4}E_2(z)
\end{equation}
for the original function, where $E_r(z)$ is the $E$-function
\begin{equation}\label{Eqn:EfunctionEr}
E_r(z)=\sum_{n=0}^{\infty} \left(\sum_{\ast=r}\frac{a_3^{k_3}a_2^{k_2}a_1^{k_1}a_0^{k_0} \cdot (\tfrac{1-r}{4})_{k-n} }{k_3!k_2!k_1!k_0!}  \right) z^n.
\end{equation}
\end{para}

\begin{lem}\label{Lem:MonodromyDecomposition}
Let $M$ be a $\sD_{\IG_m}$-module and let $s\colon M \to \IC(\!(z)\!)(z^{1/n})$ be a solution for some integer $n \geq 1$. For each $m\in M$, the unique Laurent series $s_j(m) \in \IC(\!(z)\!)$ satisfying
$$s(m) = \sum_{j=0}^{n-1}z^{j/n}s_j(m)$$
give rise to solutions $z^{j/n}s_j\colon M\to \IC(\!(z)\!)(z^{1/n})$ of $M$ for $j=0, 1, \dots, n-1$. 
\end{lem}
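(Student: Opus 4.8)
The plan is to realize $\cA = \IC(\!(z)\!)(z^{1/n})$ as a Galois extension of $\IC(\!(z)\!)$ and to recover each $z^{j/n}s_j$ as a $\IC$-linear combination of Galois conjugates of $s$. I would begin by recording two structural facts about $\cA$. First, every element is uniquely written as $\sum_{j=0}^{n-1}z^{j/n}g_j$ with $g_j\in\IC(\!(z)\!)$, so the decomposition $s(m)=\sum_{j=0}^{n-1}z^{j/n}s_j(m)$ defining the maps $s_j\colon M\to\IC(\!(z)\!)$ makes sense and is unique. Second, using $\partial z^{1/n}=\tfrac1n z^{1/n-1}$, a short computation shows that the field of constants of $\cA$ is exactly $\IC$; since the space of solutions of a differential module is a vector space over the field of constants of the target algebra, $\Hom_\sD(M,\cA)$ is a $\IC$-vector space.

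Next I would introduce the cyclic group action. Put $\omega=e^{2\pi i/n}$ and let $\sigma\colon\cA\to\cA$ be the $\IC(\!(z)\!)$-algebra automorphism determined by $\sigma(z^{1/n})=\omega z^{1/n}$; it generates $\Gal(\cA/\IC(\!(z)\!))\cong\IZ/n\IZ$. The decisive observation is that $\sigma$ is a \emph{differential} automorphism fixing $\overline\IQ[z,z^{-1}]$ pointwise: on the one hand $\sigma$ is the identity on $\IC(\!(z)\!)\supseteq\overline\IQ[z,z^{-1}]$, and on the other hand $\partial(\sigma z^{1/n})=\tfrac1n\omega z^{1/n-1}=\sigma(\partial z^{1/n})$, so $\sigma$ commutes with $\partial$. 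It follows that for every $k$ the composite $\sigma^k\circ s\colon M\to\cA$ is again $\overline\IQ[z,z^{-1}]$-linear and compatible with $\partial$, hence is once more a solution of $M$.

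I would then compute these conjugates explicitly and invert. From $\sigma(z^{j/n})=\omega^{j}z^{j/n}$ we obtain
$$(\sigma^k\circ s)(m)=\sum_{j=0}^{n-1}\omega^{jk}\,z^{j/n}s_j(m)\qquad(0\le k\le n-1),$$
so the two tuples $(\sigma^k\circ s)_k$ and $(z^{j/n}s_j)_j$ are related by the invertible Vandermonde matrix $(\omega^{jk})$. Orthogonality of the characters of $\IZ/n\IZ$ then yields
$$z^{j/n}s_j=\frac1n\sum_{k=0}^{n-1}\omega^{-jk}\,(\sigma^k\circ s),$$
exhibiting each $z^{j/n}s_j$ as a $\IC$-linear combination of solutions. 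Since $\Hom_\sD(M,\cA)$ is a $\IC$-vector space, each $z^{j/n}s_j$ is a solution of $M$, which is the assertion.

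The argument presents no serious obstacle once this Galois framing is adopted; the only point requiring care is the verification that $\sigma$ both commutes with $\partial$ and fixes the coefficient ring $\overline\IQ[z,z^{-1}]$, for this is precisely what guarantees that $\sigma$ permutes the solutions of $M$. Everything else is the standard discrete Fourier inversion over $\IZ/n\IZ$.
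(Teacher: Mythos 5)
Your proof is correct, but it follows a genuinely different route from the paper's. The paper argues by direct coefficient comparison: writing out $s(\partial m)=\partial s(m)$ in the $\IC(\!(z)\!)$-basis $\{z^{j/n}\}_{j=0}^{n-1}$ and using linear independence, it extracts the identity $z^{j/n}s_j(\partial m)=\partial\bigl(z^{j/n}s_j(m)\bigr)$ term by term, which together with the evident $\overline\IQ[z,z^{-1}]$-linearity of each $s_j$ gives the claim in a few lines with no machinery at all. You instead exploit the cyclic Galois group of $\IC(\!(z)\!)(z^{1/n})/\IC(\!(z)\!)$: the key point, which you verify correctly, is that the generator $\sigma\colon z^{1/n}\mapsto\omega z^{1/n}$ is a \emph{differential} automorphism fixing $\overline\IQ[z,z^{-1}]$, so each $\sigma^k\circ s$ is again a solution, and discrete Fourier inversion then writes $z^{j/n}s_j=\tfrac1n\sum_{k}\omega^{-jk}\,\sigma^k\circ s$ as a $\IC$-linear combination of solutions (your check that $\sigma$ commutes with $\partial$ on the single generator $z^{1/n}$ suffices, since the locus where two derivations agree is a subfield containing $\IC(\!(z)\!)$ and $z^{1/n}$). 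What the paper's argument buys is brevity and self-containedness; what yours buys is the conceptual explanation behind the lemma's name: the components $z^{j/n}s_j$ are precisely the eigencomponents of $s$ under the formal monodromy $\sigma$, and your argument generalizes verbatim to any finite Galois extension of differential fields with the same constants, realizing the components as isotypic projections of a solution along its Galois orbit.
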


\begin{proof}
Existence and uniqueness of the given decomposition of $s(m)$ follows from the fact that the powers $z^{j/n}$ form a basis $\IC(\!(z)\!)$-basis of $\IC(\!(z)\!)(z^{1/n})$, and each function $s_j\colon M\to \IC(\!(z)\!)$ is~$\overline\IQ[z,z^{-1}]$\nobreakdash-linear. For every $m\in M$, the equality
$$\sum_{j=0}^{n-1}z^{j/n}s_j(\partial m) = s(\partial m) = \partial s(m) = \sum_{j=0}^{n-1}\partial(z^{j/n}s_j(m)) = \sum_{j=0}^{n-1}z^{j/n}(\tfrac jnz^{-1}s_j(m)+ s_j(m)')$$
holds. From this and the $\IC(\!(z)\!)$-linear independence of the $z^{j/n}$, we deduce that the equality $z^{j/n}s_j(\partial m) = \partial(z^{j/n}s_j(m))$
holds for every $j$, so $z^{j/n}s_j$ is indeed a solution of $M$.
\end{proof}

\begin{lem}\label{Lem:NoOrAllSolutionsInH}
Let $M$ be an irreducible $\sD_{\IG_m}$-module of type $E$, let $m\in M$ be a non-zero element, and let $s\colon M\to \cE$ be a non-zero solution.
\begin{enumerate}
    \item If $s(m)$ belongs $\cH$, then $M$ belongs to $\bH$.
    \item If $s(m)$ is algebraic over $\cH$, then $M$  is Lie-generated by objects of $\bH$.
\end{enumerate}
\end{lem}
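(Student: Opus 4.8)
The plan is to prove both statements by a single argument, realising the hypothesis on $s(m)$ inside a Picard--Vessiot ring and then propagating it from the cyclic value $s(m)$ to the whole solution space via the irreducibility of $M$; the two cases differ only in whether a certain orbit is a point or merely finite. First I would record the reductions. Since $M$ is irreducible and $m\neq 0$, the submodule $\sD_{\IG_m}\cdot m$ equals $M$, so $m$ is a cyclic vector and the evaluation $\Hom_\sD(M,\cA)\to\cA$, $\sigma\mapsto\sigma(m)$, is injective for every differential $\sD_{\IG_m}$-algebra $\cA$. Because $M$ is of type $E$ it admits a basis of solutions in $\cE$, so working inside the ambient differential field $\cM$ (an integral domain containing $\overline\IQ(z)$ with field of constants $\IC$) we have $\Hom_\sD(M,\cM)=\Hom_\sD(M,\cE)=\omega(M)$, which is an \emph{irreducible} representation of the differential Galois group $G_M$ precisely because $M$ is simple. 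Put $y_0=s(m)\neq 0$.

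Next I would bring in the hypothesis. In case (1) we have $y_0\in\cH$; in case (2), clearing denominators in a minimal polynomial of $y_0$ over the fraction field of $\cH$ yields a non-zero $P\in\cH[X]$ with $P(y_0)=0$ (in case (1) take $P=X-y_0$). By the presentation \eqref{Eqn:PresentationOfAlgebraH}, each of the finitely many coefficients of $P$ is a solution value of some object of $\bH$, hence lies in the Picard--Vessiot ring $R_{N'}\subseteq\cM$ of a suitable $N'\in\bH$; taking $N$ to be the direct sum of these $N'$ (still an object of $\bH$) we may assume all coefficients of $P$ lie in $R_N$. Form the Picard--Vessiot ring $R=R_{M\oplus N}\supseteq R_N$ of $M\oplus N$ inside $\cM$, with differential Galois group $G=G_{M\oplus N}\subseteq G_M\times G_N$; note $y_0\in R$. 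Let $K=\ker(G\to G_N)$: it is a normal subgroup of $G$ fixing $R_N$ pointwise, and since $K\subseteq G_M\times\{1\}$ the first projection identifies $K$ with a normal subgroup $K_M=\rho_M(K)\trianglelefteq G_M$ of $\GL(\omega(M))$.

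The heart is the transfer. For $g\in K$ the relation $P(y_0)=0$ is preserved, because $g$ fixes the coefficients of $P$; hence $g\cdot y_0$ is again one of the finitely many roots of $P$, so the $K$-orbit of $y_0$ is finite (a single point in case (1)). Through the $K$-equivariant injection $\sigma\mapsto\sigma(m)$, the $K$-orbit of $s\in\omega(M)$ is finite as well. In case (2), the connected group $K_M^\circ$ has finite orbit on $s$ and therefore fixes $s$; the subspace $\omega(M)^{K_M^\circ}$ is non-zero and, by normality of $K_M^\circ$ in $G_M$, is $G_M$-stable, so irreducibility of $\omega(M)$ forces $K_M^\circ$ to act trivially, i.e. $K_M$, and hence $K$, is finite. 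This says exactly that $\fg_{M\oplus N}\to\fg_N$ is an isomorphism, so $M$ is Lie-generated by the object $N$ of $\bH$. In case (1) the orbit is a single point, so $K$ itself fixes $s$; the same normality-plus-irreducibility step gives that $K$ acts trivially on all of $\omega(M)$, whence $K=1$, the projection $G_{M\oplus N}\to G_N$ is an isomorphism, $M$ lies in the tannakian category generated by $N$, and therefore $M\in\bH$.

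I expect the main obstacle to be the bookkeeping linking the differential-algebra hypothesis to the group action: one must verify that $\cH$-elements genuinely lie in a Picard--Vessiot ring $R_N$ with $N\in\bH$ (this is what \eqref{Eqn:PresentationOfAlgebraH} delivers), that $R_N\subseteq R$ is a $G$-stable subextension so that $K=\ker(G\to G_N)$ is meaningful and fixes the coefficients of $P$, and that the solution fibre functor $\omega$ computes $G_M$ so that ``$M$ simple'' becomes ``$\omega(M)$ irreducible''. Once these are in place, the cyclic-vector injectivity together with the normal-subgroup/irreducibility dichotomy does all the work, with (1) appearing as the degenerate one-point-orbit case of (2). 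As a sanity check and alternative for (1) avoiding Galois theory, writing $M\cong\sD_{\IG_m}/\sD_{\IG_m}L$ with $L$ irreducible of order $\mathrm{rank}(M)$ and realising $y_0=t(n)$ with $N\in\bH$ cyclic on $n$, the minimal operator of $y_0$ right-divides the operators of both $m$ and $n$ and, by irreducibility of $L$, coincides with the former up to a left unit; this exhibits a surjection $N\twoheadrightarrow M$ of $\sD_{\IG_m}$-modules, whence $M\in\bH$ by stability of $\bH$ under quotients.
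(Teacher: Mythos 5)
Your argument is correct, but it takes a genuinely different route from the paper's. Both proofs start from the presentation \eqref{Eqn:PresentationOfAlgebraH} of $\cH$ as the set of solution values of objects of $\bH$. From there the paper remains entirely module-theoretic for part (1): writing $s(m)=s_0(m_0)$ with $M_0\in\bH$ generated by $m_0$, it uses the injectivity of $s$ (forced by irreducibility of $M$) to see that $m$ and $s(m)$ have the same annihilator ideal, which contains that of $m_0$; this yields a surjection $M_0\twoheadrightarrow M$, and $M\in\bH$ by stability of $\bH$ under quotients. That is essentially the ``sanity check'' you sketch in your last paragraph --- note only that the paper argues with annihilator ideals rather than minimal operators, which is the safer phrasing since left ideals of $\sD_{\IG_m}=\overline\IQ[z,z^{-1},\partial]$ need not be principal. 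Part (2) is then handled in one sentence, by reduction to an equivalence (Lie-generation by objects of $\bH$ versus algebraicity of all solutions over $\cH$) that is itself left unproved. Your Picard--Vessiot argument --- the kernel $K=\ker(G_{M\oplus N}\to G_N)$ fixes the coefficients of $P$, so its orbit on $s(m)$, hence on $s$ by cyclicity of $m$, is finite, and normality of $K_M$ (resp.\ of $K_M^\circ$) in $G_M$ together with irreducibility of the solution representation forces $K=1$ (resp.\ $K$ finite), which is exactly the tannakian criterion for generation (resp.\ Lie-generation) --- buys a uniform treatment in which (1) is the one-point-orbit degenerate case of (2), and, more substantially, an actual proof of part (2), which the paper leaves essentially to the reader. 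The price is heavier machinery and two standard facts used tacitly: the Picard--Vessiot rings should be taken inside $\mathrm{Frac}(\cM)$ rather than $\cM$ itself (inverse determinants need not lie in $\cM$, and one needs the constants of $\mathrm{Frac}(\cM)$ to be $\IC$), and simplicity of $M$ over $\overline\IQ(z)$ must be promoted to irreducibility of the solution representation over $\IC$ --- a base-change point the paper also uses silently elsewhere, e.g.\ in the proof of Lemma \ref{Lem:End0Isomorphism}.
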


\begin{proof}
Suppose that $s(m)$ belongs to $\cH$. By \eqref{Eqn:PresentationOfAlgebraH}, there exist an object $M_0\in \bH$, an element~\hbox{$m_0\in M_0$}, and a solution $s_0\colon M_0\to \cH$ satisfying $s(m)=s_0(m_0)$. We suppose without loss of generality that the $\sD_{\IG_m}$-module $M_0$ is generated by $m_0$. Since $s$ is non-zero and $M$ is irreducible, $s$ is injective. Hence,~$m \in M$ and $s(m)\in \cH$ have the same annihilator ideal in~$\sD_{\IG_m}$, and this ideal contains the annihilator ideal of $m_0\in M_0$. There is thus a unique morphism of $\sD_{\IG_m}$-modules $M_0 \to M$ mapping $m_0$ to $m$. Since $m$ is non-zero and $M$ is irreducible, this morphism is surjective. This shows that $M$ is a quotient of $M_0$, and hence belongs to~$\bH$ as claimed. The second statement is proved similarly, on noting that an object of~$\bE$ is Lie-generated by objects of $\bH$ if and only if all of its solutions are algebraic over $\bH$. 
\end{proof}

\begin{thm}\label{Cor:ApplicationtoPolynomials} 
Let $a_0,\ldots,a_3 \in \overline \IQ$ be algebraic numbers such that the critical values of the polynomial $f(x)= x^4-a_3x^3-a_2x^2-a_1x-a_0$ are neither collinear nor do they form an equilateral triangle. Then, the $E$-functions $E_0(z)$ and $E_2(z)$ given in \eqref{Eqn:EfunctionEr} are transcendental over $\cH$, and in particular are not polynomial expressions in hypergeometric $E$\nobreakdash-functions. 
\end{thm}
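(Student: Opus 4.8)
The plan is to run the geometric criterion of Theorem~\ref{Thm:symconst} against the explicit module attached to $f$, and then transfer the resulting transcendence statement from the parameter integral $P(z)=\int_\IR e^{-zf(x)}dx$ to its two monodromy components $E_0$ and $E_2$.

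First I would set up the module. Since a degree-four polynomial has at most three distinct critical values and any two points are collinear, the non-collinearity hypothesis forces $f$ to have exactly three distinct critical values. Lemma~\ref{Lem:simplicity} then shows that $A=f_\ast\cO_{\IA^1}/\cO_{\IA^1}$ is simple, so $M=j^\ast\FT^{-1}(A)$ is a simple object of $\Conn_0(\IG_m)$, and Proposition~\ref{Pro:DModuleForPolynomial} shows it is of type $E$, of rank three, with $A\cong\FT(j_\ast M)$. By Lemma~\ref{Lem:ContainsSL3}, the Galois group of $M$ contains $\SL_3$, and by Example~\ref{Exmp:ExponentialPeriodModules} the singularities of $\FT(j_\ast M)=A$ are precisely the critical values of $f$, since $\sing(A)=\sum_{f'(\alpha)=0}[f(\alpha)]$.

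Next I would show that $M$ is not Lie-generated by objects of $\bH$. Suppose it were. Then $M$ is a three-dimensional object of $\bE$ whose Galois group contains $\SL_3$ and which is Lie-generated by $\bH$, so Theorem~\ref{Thm:symconst} applies and forces the singularities of $\FT(j_\ast M)$, that is, the critical values of $f$, to be either collinear or the vertices of an equilateral triangle, contradicting the hypothesis on $f$. Combining this with the contrapositive of Lemma~\ref{Lem:NoOrAllSolutionsInH}(2) and the fact that a non-zero solution of a simple module is injective, I would conclude that for every non-zero solution $s\colon M\to\cE$ and every non-zero $m\in M$, the function $s(m)$ is transcendental over $\cH$. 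In particular, taking $s=E_{\gamma_2}$ and $m=E(1,z)$, the integral $P=F_{2,1}$ is transcendental over $\cH$.

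Finally I would descend to $E_0$ and $E_2$ by separating monodromy. Reading $P$ as a solution valued in $\IC(\!(z)\!)(z^{1/4})$ and applying Lemma~\ref{Lem:MonodromyDecomposition} with $n=4$, the two eigencomponents appearing in the decomposition \eqref{eqn:monodromydecomposition} are themselves values at $m$ of honest solutions of $M$: up to the non-zero constants $\tfrac12\Gamma(\pm\tfrac14)$, these components are $z^{-1/4}E_0(z)$ and $z^{1/4}E_2(z)$, which lie in $\cE$ because $E_0,E_2$ are $E$-functions and the exponents are rational. Since $E_0(0)=1$ and $E_2$ is non-zero (as one reads off directly from \eqref{Eqn:EfunctionEr}), these solutions are non-zero, hence their values are transcendental over $\cH$ by the previous paragraph; as $z^{\pm1/4}\in\cH$, it follows that $E_0$ and $E_2$ are transcendental over $\cH$, and a fortiori not polynomial expressions in hypergeometric $E$-functions. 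The step I expect to be the main obstacle is precisely this last one: one must make rigorous that the monodromy components produced by Lemma~\ref{Lem:MonodromyDecomposition} are genuine solutions with values in $\cE$ rather than merely in $\cM$, and secure the non-vanishing of $E_2$, whereas everything preceding it is a direct assembly of the stated results.
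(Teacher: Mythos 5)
Your proposal is, in structure and in every essential step, the paper's own proof: the same module $M=j^\ast\FT^{-1}(f_\ast\cO_{\IA^1}/\cO_{\IA^1})$ via Proposition~\ref{Pro:DModuleForPolynomial}, simplicity and $\SL_3$ via Lemmas~\ref{Lem:simplicity} and~\ref{Lem:ContainsSL3}, exclusion of Lie-generation by $\bH$ via Theorem~\ref{Thm:symconst}, and descent to $E_0,E_2$ via Lemmas~\ref{Lem:MonodromyDecomposition} and~\ref{Lem:NoOrAllSolutionsInH}.

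The one step you flagged as the main obstacle is indeed the only delicate one, and your proposed justification for it does not work as stated. Lemma~\ref{Lem:NoOrAllSolutionsInH} requires the solution to be \emph{non-zero}, so the argument needs $E_r\not\equiv 0$; this is not a formality, since the conclusion of the theorem is false for the zero series. For $E_0$ the constant term is $1$ and there is nothing to do, but for $E_2$ nothing can be ``read off directly'' from \eqref{Eqn:EfunctionEr}: the constant term of $E_2$ equals $-\tfrac{3}{32}a_3^2-\tfrac14 a_2$, which vanishes whenever $a_2=-\tfrac38a_3^2$, and such polynomials (take $a_3\neq 0$ and $a_1,a_0$ generic) do satisfy the hypotheses of the theorem. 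A correct argument uses the simplicity of $M$ once more. Let $\sigma$ denote the monodromy component of the solution $s_2$ (integration over $\IR$) with eigenvalue $i=e^{2\pi i/4}$, which is a solution of $M$ by Lemma~\ref{Lem:MonodromyDecomposition}; by \eqref{eqn:monodromydecomposition} it satisfies $\sigma(m_1)=\tfrac12\Gamma(-\tfrac14)z^{1/4}E_2(z)$, where $m_1$ is the class of $dx$. Now evaluate $\sigma$ at the class $m_3$ of $x^2dx$ instead: the change of variables $s=zx$ as in \ref{sec:ComputationExponentialIntegral}, applied to $F_{2,3}(z)=\int_\IR e^{-zf(x)}x^2dx$, shows that the coefficient of $z^{-3/4}$ in $F_{2,3}$ is exactly $\tfrac12\Gamma(\tfrac34)\neq 0$, and $z^{-3/4}$ has monodromy eigenvalue $i$; hence $\sigma(m_3)\neq 0$, so $\sigma$ is a non-zero solution. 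Since $M$ is simple under the hypotheses, $\sigma$ is injective, whence $\sigma(m_1)\neq 0$, that is, $E_2\not\equiv 0$; only then may Lemma~\ref{Lem:NoOrAllSolutionsInH} be applied to $\sigma$. You should be aware that the paper's own proof is silent on this point as well---it invokes Lemma~\ref{Lem:NoOrAllSolutionsInH} for the solution produced by Lemma~\ref{Lem:MonodromyDecomposition} without verifying that it is non-zero---so your instinct to isolate this step was correct; what was missing was a proof rather than an appeal to the explicit series.
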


\begin{proof}
By Proposition~\ref{Pro:DModuleForPolynomial}, the module $M=j^\ast\FT^{-1}(f_\ast \cO_{\IA^1}/\cO_{\IA^1})$ is an object of $\bE$ and there exists an element $m_1\in M$, namely the class of $dx$, and a solution $s_2$ of $M$, namely integration along the real line, such that $F_{2,1} = s_2(m_1)$ holds. Let $r\in \{0,2\}$. By Lemma~\ref{Lem:MonodromyDecomposition}, there also exists a solution~\hbox{$s\colon M\to \cE$} with $s(m_1) = z^{(r-1)/4}E_r(z)$. Using Lemma~\ref{Lem:ContainsSL3} and Theorem~\ref{Thm:symconst}, the hypotheses on $f$ imply that $M$ is simple and not Lie-generated by objects of~$\bH$, so we deduce from Lemma \ref{Lem:NoOrAllSolutionsInH} that $z^{(r-1)/4}E_r(z)$, and hence $E_r(z)$, is transcendental over~$\cH$.
\end{proof}

\begin{para}
If we identify the space of monic polynomials of degree four with complex coefficients with $\IR^8$ by taking real and imaginary parts of coefficients, those with critical values that are neither collinear nor do they lie on an equilateral triangle form a Zariski open dense subset. In that sense, most monic polynomials $f\in \overline \IQ[t]$ of degree four satisfy this hypothesis. The $E$-function from the introduction was produced by choosing $f(x)=x^4-x^2+x$ in Theorem~\ref{Cor:ApplicationtoPolynomials}. The critical values of this polynomial are the roots of $x^3 +\tfrac 12 x^2 - \tfrac 12 x + \tfrac{23}{256}$, which form an isosceles, but not equilateral triangle. Since now $(a_3,a_2,a_1,a_0)$ is equal to $(0,1,-1,0)$, all terms in the sums labelled with $(\ast=r)$ in \eqref{Eqn:EfunctionEr} are zero, except when $k_3=k_0=0$, and we find the $E$-functions
\begin{displaymath}
E_r(z)=\sum_{n=0}^{\infty} \left(\sum_{2k_2+3k_1=4n+r}\frac{(-1)^{k_1} \cdot (\tfrac{1-r}{4})_{k-n} }{k_2!k_1!}  \right) z^n. 
\end{displaymath}
For $r=0$, the condition $2k_2+3k_1=4n+r$ implies that $k_1$ is even. Writing $2m=k_1$, it translates into $k_2=2n-3m$, hence the expression 
\[
E_0(z) =\sum_{n=0}^\infty \sum_{m=0}^{\floor{2n/3}} \frac{(\tfrac 14)_{n-m} }{(2n-3m)!(2m)!} \:z^n, 
\]
which is the one given in the introduction.
\end{para}

\bibliographystyle{amsplain}
    \bibliography{Siegel}

\end{document}